\definecolor{blau}{rgb}{0,0,0.75} %color for in-document links
\newtheorem{theorem}{Theorem}[section]
\newtheorem{lemma}[theorem]{Lemma}
\newtheorem{coroll}[theorem]{Corollary}
\newtheorem{prop}[theorem]{Proposition}
\theoremstyle{definition}
\newtheorem{remark}[theorem]{Remark}
\newcommand{\rt}{\textsf{\textup{root}}}
\newcommand{\str}{\textsf{\textup{star}}}
\newcommand{\chain}{\textsf{\textup{chain}}}
\newcommand{\cycle}{\textsf{\textup{cycle}}}
\newcommand{\id}{\textsf{\textup{id}}}
\newcommand{\reallocate}[4]{\textsf{\textup{reallocate}}\!\left(#1 \left| \begin{smallmatrix} #2\\ \downarrow\\ #3 \end{smallmatrix} \mapsto \begin{smallmatrix} #2\\ \downarrow\\ #4 \end{smallmatrix}\right.\right)}
\begin{document}

\author{Marie-Louise Bruner and Alois Panholzer}
\address{Marie-Louise Bruner\\
Institut f{\"u}r Diskrete Mathematik und Geometrie\\
Technische Universit\"at Wien\\
Wiedner Hauptstr. 8-10/104\\
1040 Wien, Austria} \email{marie-louise.bruner@tuwien.ac.at}
\address{Alois Panholzer\\
Institut f{\"u}r Diskrete Mathematik und Geometrie\\
Technische Universit\"at Wien\\
Wiedner Hauptstr. 8-10/104\\
1040 Wien, Austria} \email{Alois.Panholzer@tuwien.ac.at}

\thanks{The authors were supported by the Austrian Science Foundation FWF, grant P25337-N23.}

\title{Parking functions for trees and mappings}

\begin{abstract}
We apply the concept of parking functions to rooted labelled trees and functional digraphs of mappings (i.e., functions $f : [n] \to [n]$) by considering the nodes as parking spaces and the directed edges as one-way streets: Each driver has a preferred parking space and starting with this node he follows the edges in the graph until he either finds a free parking space or all reachable parking spaces are occupied. If all drivers are successful we speak about a parking function for the tree or mapping.
We transfer well-known characterizations of parking functions to trees and mappings.
Especially, this yields bounds and characterizations of the extremal cases for the number of parking functions with $m$ drivers for a given tree $T$ of size $n$.
Via analytic combinatorics techniques we study the total number $F_{n,m}$ and $M_{n,m}$ of tree and mapping parking functions, respectively, i.e., the number of pairs $(T,s)$ (or $(f,s)$), with $T$ a size-$n$ tree (or $f : [n] \to [n]$ an $n$-mapping) and $s \in [n]^{m}$ a parking function for $T$ (or for $f$) with $m$ drivers, yielding exact and asymptotic results.
We describe the phase change behaviour appearing at $m=\frac{n}{2}$ for $F_{n,m}$ and $M_{n,m}$, respectively, and relate it to previously studied combinatorial contexts. Moreover, we give a bijective proof of the occurring relation $n F_{n,m} = M_{n,m}$.
\end{abstract}

\maketitle

\section{Introduction}

Parking functions are combinatorial objects originally introduced by Konheim and Weiss \cite{KonWei1966} during their studies of the so-called linear probing collision resolution scheme for hash tables. Since then, parking functions have been studied extensively and many connections to various other combinatorial objects such as forests, hyperplane arrangements, acyclic functions and non-crossing partitions have been revealed, see, e.g., \cite{Stanley1997}.

An illustrative description of parking functions is as follows: consider a one-way street with $n$ parking spaces numbered from $1$ to $n$ and a sequence of $m$ drivers with preferred parking spaces $s_{1}, s_{2}, \dots, s_{m}$. The drivers arrive sequentially and each driver $k$, $1 \le k \le m$, tries to park at his preferred parking space with address $s_{k} \in [n]$, where $[n] \colonequals \{1, 2, \dots, n\}$. If it is free he parks.
Otherwise he moves further in the allowed direction (thus examining parking spaces $s_{k}+1, s_{k}+2, \dots$) until he finds a free parking space, where he parks.
If there is no such parking space he leaves the street without parking. A parking function is then a sequence $(s_{1}, \dots, s_{m}) \in [n]^{m}$ of addresses such that all $m$ drivers are able to park. It has been shown already in \cite{KonWei1966} that there are exactly $P_{n,m} = (n+1-m) \cdot (n+1)^{m-1}$ parking functions, for $n$ parking spaces and $0 \le m \le n$ drivers.

The notion of parking functions has been generalized in various ways, yielding, e.g., $(a,b)$-parking functions~\cite{Yan2001}, bucket parking functions~\cite{BlaKon1977}, $x$-parking functions~\cite{PitSta2002}, or $G$-parking functions~\cite{PosSha2004}. 
Another natural generalization that has however not been considered yet is the following:
Starting with the original definition of parking functions as a vivid description of a simple collision resolution scheme, we apply it to other objects of interest, namely, to rooted trees and mappings, respectively.

First, when allowing branches in the road net, this collision resolution scheme leads to a natural generalization of parking functions to rooted trees. Consider a rooted labelled tree $T$ of size $|T|=n$, i.e., we assume (for simplicity) that the vertices of $T$ are labelled by distinct integers of $[n]$. Furthermore, we assume that the edges of the tree are oriented towards the root node, which we will often denote by $\rt(T)$.
We thus view edges as ``one-way streets''. 
Now, we consider a sequence of $m$ drivers, where again each driver has his preferred parking space, which in this case is a node in the tree, respectively its label (throughout this work, we will always identify a node with its label). The drivers arrive sequentially and each driver $k$, $1 \le k \le m$, tries to park at his preferred parking space with address $s_{k} \in [n]$. If it is free he parks.
Otherwise he follows the edges towards the root node and parks at the first empty node, if there is such one.
If there is no empty node, he leaves the road net, i.e., the tree without parking. A sequence $s \in [n]^{m}$ of addresses (i.e., a function $s : [m] \to [n]$) is then called a parking function for the tree $T$, if all drivers are successful, i.e., if all drivers are able to find a parking space. More precisely, we will call a pair $(T,s)$ an $(n,m)$-tree parking function, if $T$ is a rooted labelled tree of size $n$ and $s \in [n]^{m}$ is a parking function for $T$ with $m$ drivers. In Figure~\ref{fig:treepark} we give an example of a tree parking function.
\begin{figure}
\begin{center}
\includegraphics[height=3.5cm]{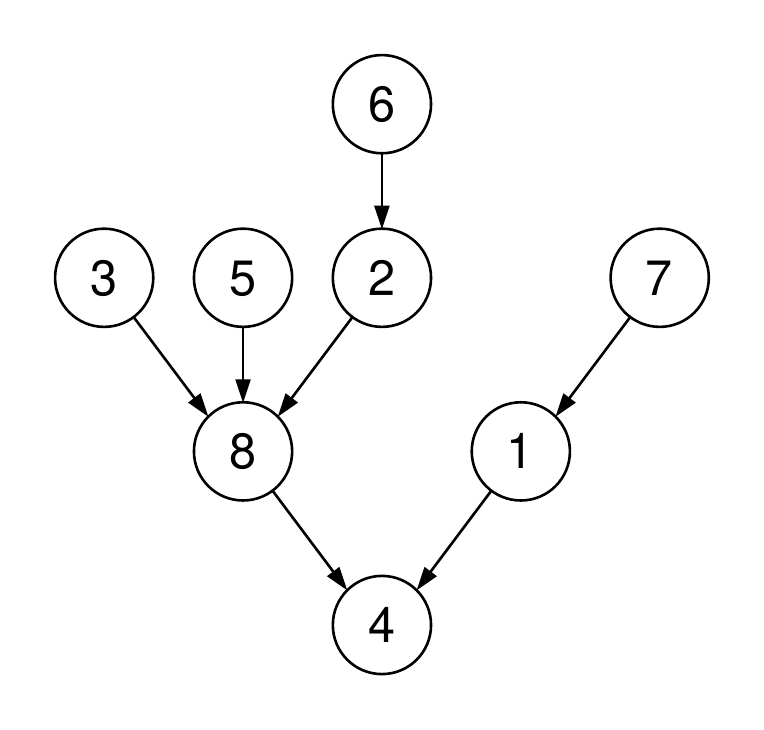}
\includegraphics[height=3.5cm]{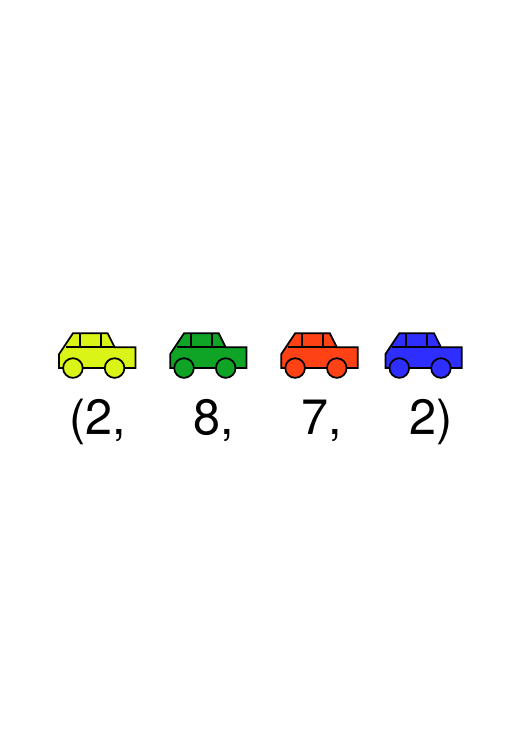}
\raisebox{1.6cm}{$\Rightarrow$}
\includegraphics[height=3.5cm]{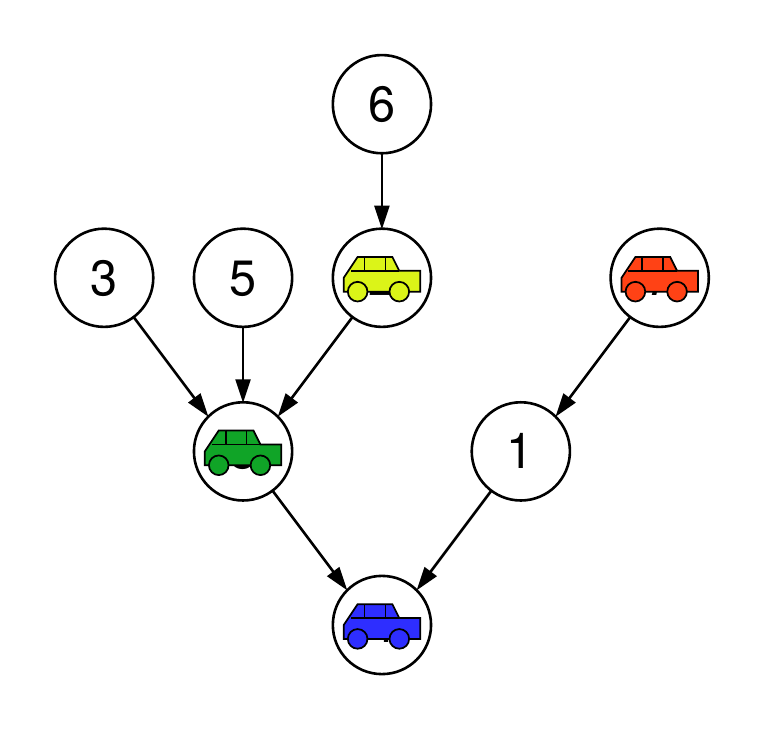}
\end{center}
\caption{A size-$8$ rooted labelled tree $T$ and a sequence $s= (2,8,7,2)$ of addresses of preferred parking spaces for $4$ drivers. All drivers are successful, thus $(T,s)$ yields a $(8,4)$-tree parking function; the parking positions of the drivers are given by the sequence $(2,8,7,4)$ defining the output-function $\pi_{(T,s)}$. Conversely, the sequence $(2,8,8,2)$ is not a parking function for $T$, since the fourth driver is not able to park.\label{fig:treepark}}
\end{figure}

Second, one can go a step further and consider structures in general for which the simple collision resolution scheme is applicable, i.e., for which a driver reaching an occupied parking space can move on in a unique way to reach a new parking space. This naturally leads to a generalization of parking functions to mappings: Consider the set $[n]$ of addresses and a mapping $f: [n] \to [n]$ (which we call here $n$-mapping).
If a driver reaches address $i$ and is unable to park there, he moves on to the parking space with address $j=f(i)$ for his next trial. The road net is then the functional digraph $G_{f}$ of the mapping $f: [n] \to [n]$, i.e., the directed graph $G_{f} = (V, E)$, with $V = [n]$ and $E = \{(i,f(i)) : i \in [n]\}$. Since functional digraphs are obviously characterized by the property that each node has out-degree $1$, the parking procedure described above can be applied. Again, the drivers arrive sequentially and each driver has his preferred parking space (a node in the graph).
If it is empty he will park, otherwise he follows the edges and parks at the first empty node, if such one exists.
Otherwise he cannot park since he would be caught in an endless loop. A sequence $s \in [n]^{m}$ of addresses (i.e., a function $s : [m] \to [n]$) is then called a parking function for the graph $G_{f}$, or alternatively, a parking function for the mapping $f$ (in this context we will always identify a mapping $f$ with its functional digraph $G_{f}$), if all drivers are successful, i.e., all drivers find a parking space.
A pair $(G_{f},s)$ (or alternatively $(f,s)$) is called an $(n,m)$-mapping parking function, if $f$ is an $n$-mapping and $s \in [n]^{m}$ is a parking function for $G_{f}$ with $m$ drivers. In Figure~\ref{fig:mappingpark} we give an example of a mapping parking function.
\begin{figure}
\begin{scriptsize}
$f : \left(\begin{array}{ccccccccccccccccccc}
1 & 2 & 3 & 4 & 5 & 6 & 7 & 8 & 9 & 10 & 11 & 12 & 13 & 14 & 15 & 16 & 17 & 18 & 19\\
\downarrow & \downarrow & \downarrow & \downarrow & \downarrow & \downarrow & \downarrow & \downarrow & \downarrow & \downarrow & \downarrow & \downarrow & \downarrow & \downarrow & \downarrow & \downarrow & \downarrow & \downarrow & \downarrow \\
5 & 7 & 1 & 12 & 13 & 10 & 14 & 10 & 2 & 13 & 5 & 18 & 12 & 7 & 5 & 14 & 13 & 5 & 14
\end{array}
\right)$
\end{scriptsize}
\begin{minipage}{14cm}
\begin{center}
\includegraphics[height=3.5cm]{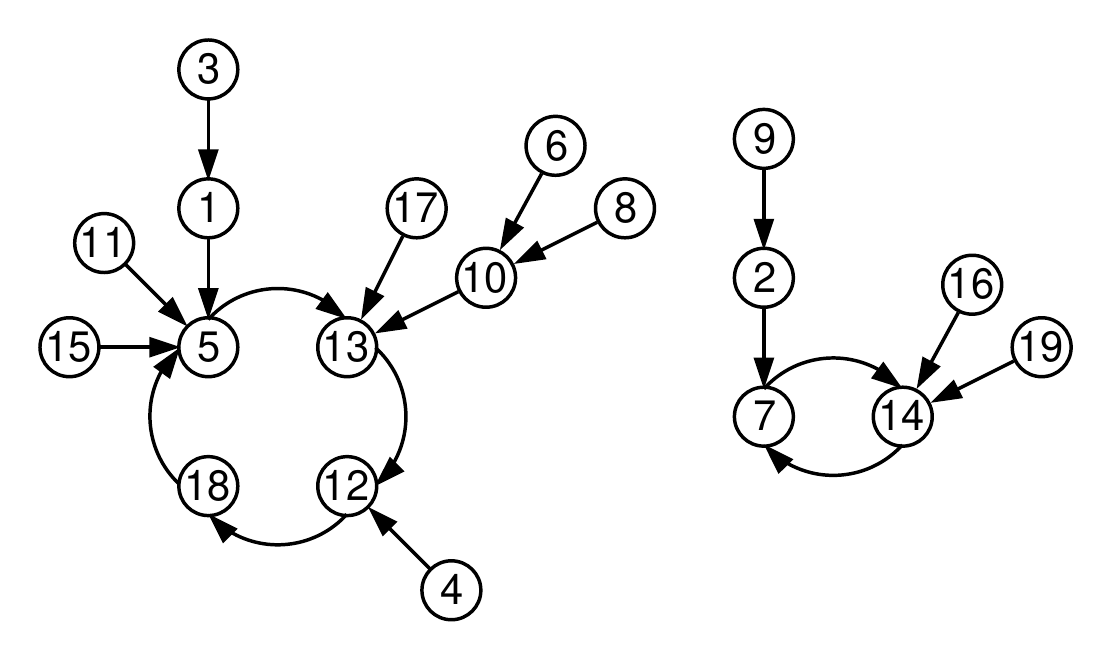}
\includegraphics[height=3.5cm]{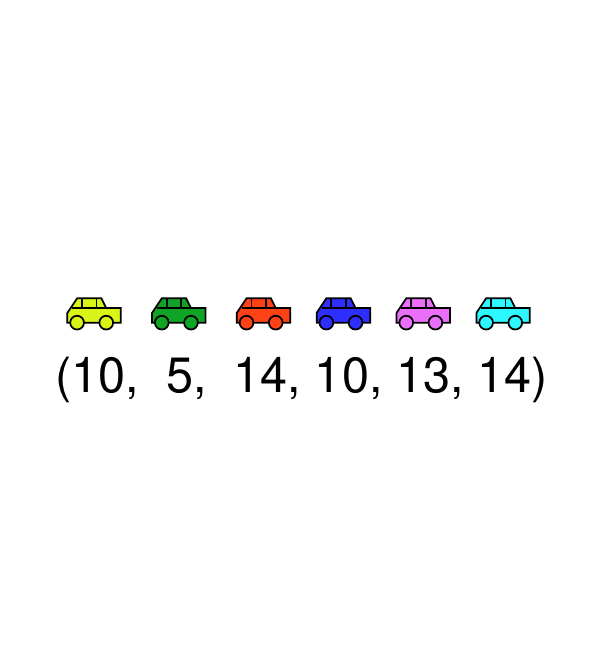}\\[-5mm]
$\Downarrow$\\[-2mm]
\includegraphics[height=3.5cm]{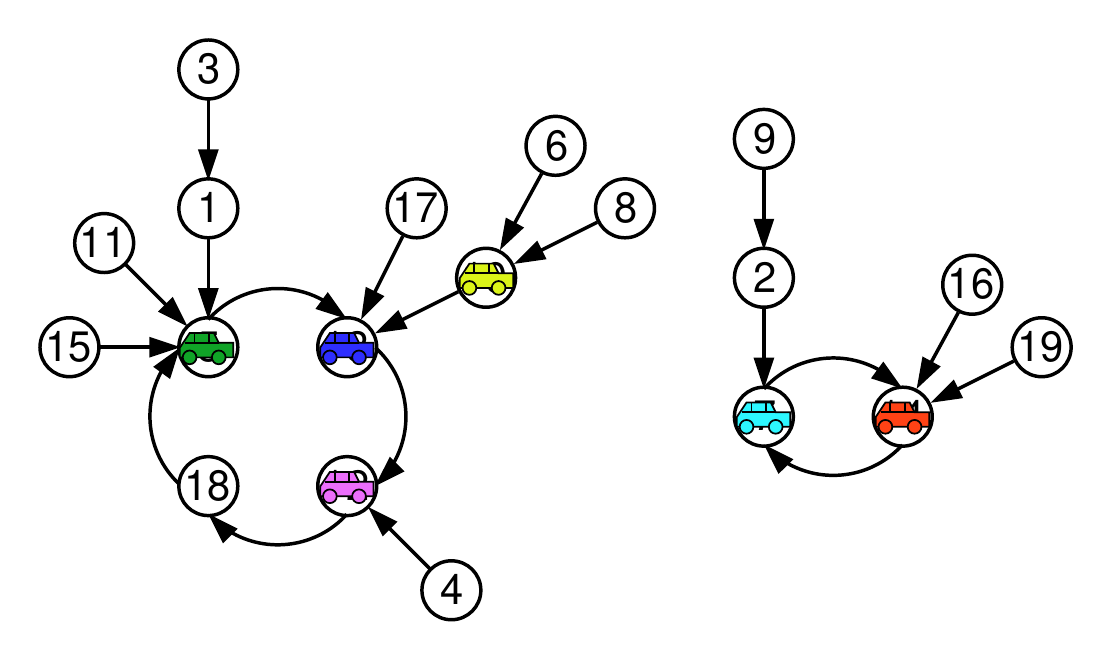}
\end{center}
\end{minipage}
\caption{The functional digraph $G_{f}$ of a $19$-mapping $f$ and a sequence $s= (10, 5, 14, 10, 13, 14)$ of addresses of preferred parking spaces for $6$ drivers. All drivers are successful, thus $(G_{f},s)$ yields a $(19,6)$-mapping parking function with output-function $\pi_{(f,s)}$ defined by the sequence $(10, 5, 14, 13, 12, 7)$ of parking positions of the drivers. When a new driver with preferred parking space $7$ arrives, he is not able to park, thus $(10, 5, 14, 10, 13, 14, 7)$ is not a parking function for $f$.\label{fig:mappingpark}}
\end{figure}

To each $(n,m)$-tree parking function $(T,s)$ (or $(n,m)$-mapping parking function $(f,s)$), we associate the corresponding \textit{output}-function $\pi \colonequals \pi_{(T,s)}$ (or $\pi \colonequals \pi_{(f,s)}$),
with $\pi : [m] \to [n]$, where $\pi(k)$ is the address of the parking space (i.e., the label of the node) in which the $k$-th driver ends up parking. Of course, $\pi$ is an injection and for the particular case $m=n$ a bijection; thus in the latter case one may speak about the output-permutation $\pi$. This notion will be useful in subsequent considerations describing characterizations and bijections for tree and mapping parking functions.

Obviously, both concepts of parking functions for trees and mappings, respectively, generalize ordinary parking functions: first, each ordinary parking function on $[n]$ can be identified with a parking function for the linear tree (i.e., the chain) $1-2- \cdots - n$ with root $n$, and second, each parking function for a rooted labelled tree $T$ of size $n$ can be identified with a parking function for the functional digraph, which is obtained from $T$ by adding a loop-edge to the root.

We start our studies of tree and mapping parking functions by giving some of their basic properties and characterizations in Section~\ref{sec:Basic}.
This extends corresponding properties and characterizations for ordinary parking functions. 
As a direct application we can characterize the extremal values for the number of parking functions with $0 \le m \le n$ drivers amongst all size-$n$ trees.
The minimal number $n^{\underline{m}} + (n-1)^{\underline{m-1}} \binom{m}{2}$ of parking functions occurs for star-like trees, where $x^{\underline{k}} \colonequals x (x-1) \cdots (x-k+1)$ is the notation used throughput this paper for the falling factorials.
The maximal number of parking functions $(n+1-m) (n+1)^{m-1}$ occurs for linear trees. 
Of course, for mappings this problem is trivial:
The minimal number $n^{\underline{m}}$ of parking functions occurs for the case where $f$ is the identity (only sequences of distinct addresses are  parking functions).
Conversely, the maximal number $n^{m}$ of parking functions occurs whenever $f$ is a cyclic permutation (every sequence of addresses is a parking function).

The main focus of this paper lies on the exact and asymptotic enumeration of the total number of $(n,m)$-tree parking functions and $(n,m)$-mapping parking functions, respectively. Let us denote by $\mathcal{T}$ the combinatorial family of rooted unordered labelled trees (so-called Cayley trees), where unordered means that we do not impose any left-to-right ordering of the subtrees of a node in the tree.
We may thus assume that to each node in the tree a (possibly empty) set of children is attached.
Furthermore, $\mathcal{T}_{n} \colonequals \{T \in \mathcal{T} : |T| = n\}$ denotes the family of size-$n$ Cayley trees. Moreover, let $\mathcal{M} \colonequals \bigcup_{n \ge 0} \mathcal{M}_{n}$ and $\mathcal{M}_{n} \colonequals \{f : [n] \to [n]\}$ denote the combinatorial family of mappings and $n$-mappings, respectively. Sections~\ref{sec:drivers_equal_nodes}-\ref{sec:general_case} are then devoted to a study of the exact and asymptotic behaviour of the quantities
\begin{align*}
  F_{n,m} & \colonequals \lvert \{(T,s) \: : \: \text{$T \in \mathcal{T}_{n}, s \in [n]^{m}$ and $s$ a parking function for $T$}\} \rvert,\\
	M_{n,m} & \colonequals |\{(f,s) \: : \: \text{$f \in \mathcal{M}_{n}, s \in [n]^{m}$ and $s$ a parking function for $f$}\}|,
\end{align*}
counting the total number of $(n,m)$-tree parking functions and $(n,m)$-map\-ping parking functions, respectively.

In order to get exact enumeration results we use suitable combinatorial decompositions of the objects, which give recursive descriptions of the quantities of interest. The recurrences occurring can be treated by a generating functions approach yielding partial differential equations.
These differential equations allow for implicit characterizations of the generating functions via the solution of a certain functional equation (conceptually, such a treatment is related to \cite{KubPan2010}). Exact counting formul{\ae} are then obtained by applying the Lagrange inversion formula~\cite{Stanley1997}. This treatment is divided into two main steps: first, in Section~\ref{sec:drivers_equal_nodes} we treat the important particular case $m=n$, i.e., we consider parking functions where the number of drivers is equal to the number of parking spaces. A combinatorial decomposition with respect to the last empty parking space before the final driver appears is the starting point for the exact enumeration of $F_{n} \colonequals F_{n,n}$ and $M_{n} \colonequals M_{n,n}$. Asymptotic results can be obtained easily by applying standard singularity analysis of generating functions~\cite{flajolet2009analytic}.

The general case in which the number of drivers $m$ is less than or equal to the number of parking spaces $n$ is then treated in Section~\ref{sec:general_case}.
Here a decomposition of the objects with respect to the free parking space with largest label in the final configuration is applied. From the exact results for $F_{n,m}$ and $M_{n,m}$ it follows somewhat surprisingly that $M_{n,m} = n F_{n,m}$, for $1 \le m \le n$. 
Of course, the numbers $T_{n} \colonequals |\mathcal{T}_{n}| = n^{n-1}$ of size-$n$ Cayley trees and the numbers $|\mathcal{M}_{n}| = n^{n}$ of $n$-mappings themselves satisfy such a relationship.
However, standard constructions such as Pr\"{u}fer codes do not seem to give a simple explanation, why this carries over to the total number of parking functions. 
In Section~\ref{sec:drivers_equal_nodes}-\ref{sec:general_case} we construct a bijection which maps each triple $(T, s, w)$, with $(T,s)$ an $(n,m)$-tree parking function and $w$ a node in $T$, to an $(n,m)$-mapping parking function $(f,s)$  and thus implies and explains the stated relation.
Note that indeed $s$ remains fixed in this correspondence.

To give a complete picture of the asymptotic behaviour of $M_{n,m}$ (and thus also $F_{n,m}$) depending on the growth of $m$ w.r.t.\ $n$ requires a more detailed study using saddle point methods. 
We consider the probability $p_{n,m} \colonequals M_{n,m}/n^{n+m} = F_{n,m}/n^{n+m-1}$ that a randomly chosen pair $(f,s)$ of an $n$-mapping $f$ and a sequence $s$ of $m$ addresses is indeed a parking function and thus the probability that all drivers are successful.
For $m \sim \frac{n}{2}$ there occurs a phase change behaviour in this probability:
If $\frac{m}{n} < \frac{1}{2} - \epsilon$, then there is asymptotically a positive probability that all drivers can park successfully, whereas for $\frac{m}{n} > \frac{1}{2} + \epsilon$ the probability that all drivers are successful is exponentially small. 
Qualitatively, the transient behaviour at $m \sim \frac{n}{2}$ is the same as observed previously in other combinatorial contexts, such as, e.g., in the analysis of random graphs during the phase where a giant component has not yet emerged.
See \cite{BanFlaSchaefSor2001,FlaKnuPit1989} or \cite[Ch.~VIII.10.]{flajolet2009analytic}.

In Section~\ref{sec:further_research} we conclude this paper by giving some remarks on open problems and possible further research directions.

\section{Basic properties of parking functions for trees and mappings\label{sec:Basic}}

In this section we will state and prove some basic facts on parking functions for trees and mappings. 
The following notation will turn out to be useful: Given an $n$-mapping $f$, we define a binary relation $\preceq_{f}$ on $[n]$ via
\[
i \preceq_f j: \Longleftrightarrow \exists k \in \mathbb{N}: f^k(i)=j.
\]
Thus $i \preceq_{f} j$ holds if there exists a directed path from $i$ to $j$ in the functional digraph $G_{f}$, and we say that $j$ is a successor of $i$ or that $i$ is a predecessor of $j$.
In this context a one-way street represents a total order, a tree represents a certain partial order, where the root node is the maximal element (to be precise, a partially ordered set with maximal element, where every interval is a chain - this is also called tree in set theory) and a mapping represents a certain pre-order (i.e., binary relation that is transitive and reflexive).

Furthermore, the combinatorial structure of the functional digraph $G_{f}$ of an arbitrary mapping function $f$ is well known \cite{flajolet2009analytic}: the weakly connected components of $G_{f}$ are cycles of rooted labelled trees.
That is, each connected component consists of rooted labelled trees (with edges oriented towards the root nodes) whose root nodes are connected by directed edges such that they form a cycle (see Figure~\ref{fig:mappingpark} for an example). We call a node $j$ lying on a cycle, i.e., for which there exists a $k \ge 1$ such that $f^{k}(j) = j$, a cyclic node.

\subsection{Changing the order in a parking function}

For ordinary parking functions the following holds:
changing the order of the elements of a sequence does not affect its property of being a parking function or not. This fact can easily be generalized to parking functions for mappings (which might also be trees).

\begin{lemma}
A function $s : [m] \rightarrow [n]$ is a parking function for a mapping $f : [n] \to [n]$ if and only if $s \circ \sigma$ is a parking function for $f$ for any permutation $\sigma$ on $[m]$.
\end{lemma}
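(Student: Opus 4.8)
The plan is to exploit the fact that the symmetric group $S_m$ is generated by adjacent transpositions, so it suffices to prove the statement when $\sigma$ swaps two consecutive indices $k$ and $k+1$; the general case then follows by composing such swaps, and the two directions of the equivalence are interchangeable by replacing $\sigma$ with $\sigma^{-1}$. I would thus reduce the whole claim to a purely local assertion about two drivers: if the first $k-1$ drivers have already produced some occupied set $A \subseteq [n]$, and the next two drivers have preferred spaces $a \colonequals s(k)$ and $b \colonequals s(k+1)$, then whether both of them manage to park, and \emph{which} two nodes they end up occupying, does not depend on the order in which these two arrive. Granting this, the configuration before driver $k$ and the configuration after driver $k+1$ are unchanged by the swap, so drivers $k+2,\dots,m$ behave identically and the parking property of the whole sequence is preserved.

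For the local assertion I would use the structure of $G_f$ recalled above. Write $R_v \colonequals \{j \in [n] : v \preceq_f j\}$ for the forward orbit of a node $v$; this is exactly the (finite) set of nodes a driver with preference $v$ can examine, namely $v, f(v), f^2(v), \dots$ in this order, eventually running along a tree branch into a cycle. Such a driver parks at the first node of $R_v$ (in this path order) that is empty under the current configuration, and fails precisely when all of $R_v$ is occupied. The key geometric input is that in a functional digraph the two directed paths emanating from $a$ and from $b$, once they meet, coincide forever; hence either $R_a \cap R_b = \emptyset$, or they share a common suffix $R_c$, where $c$ is the confluence node, and $R_a = P_a \sqcup R_c$, $R_b = P_b \sqcup R_c$ with the ``private'' parts $P_a, P_b$ and $R_c$ pairwise disjoint (the private parts possibly empty).

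The proof of the local assertion is then a short case analysis, using that a parked car only ever \emph{adds} occupancy. If driver $a$ finds its first empty space on its private part $P_a$, then that space lies on no path through $b$, so it cannot influence $b$, and symmetrically; in this situation the two drivers act independently and the order clearly does not matter. The only interacting case is when a driver's first empty node lies on the shared suffix $R_c$: there both drivers compete for the empty nodes of $R_c$, and in either order they occupy exactly the first two empty nodes of $R_c$ in path order. In every case the unordered pair of newly occupied nodes is the same, and ``both drivers succeed'' is equivalent to the order-independent condition that $R_a$ (resp.\ $R_c$) contains enough empty spaces; here I use monotonicity, i.e.\ that enlarging the occupied set can only push a later driver further along its fixed path but never turn a failure into a success.

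I expect the main obstacle to be making the confluence case analysis fully airtight, in particular checking that a given driver fails in one order exactly when it fails in the other. This is where one must argue carefully that, since $R_a, R_b, R_c$ are themselves order-independent and the two drivers consume the same number of empty reachable spaces regardless of order, the second driver always faces the same ``is everything occupied?'' situation. Conceptually the reduction to adjacent transpositions is a local-confluence (diamond) argument, and all of the functional-digraph geometry is concentrated in the single two-driver swap.
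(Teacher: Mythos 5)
Your global strategy is the same as the paper's: reduce to adjacent transpositions and prove a local two-driver claim, namely that the pair of newly occupied nodes (and hence success of the pair) is invariant under swapping, so that all later drivers are unaffected. The gap is in your interacting case. The claim that the two drivers ``occupy exactly the first two empty nodes of $R_c$ in path order'' is false, and indeed ``path order on $R_c$'' is not even well defined, because the two drivers may enter the shared part at \emph{different points} and traverse it in different cyclic orders. Concretely, take $f$ the $4$-cycle $1 \to 2 \to 3 \to 4 \to 1$, all nodes empty, $a = 1$, $b = 3$. Here $R_a = R_b = R_c = \{1,2,3,4\}$, the private parts are empty, both drivers park inside $R_c$, yet they occupy $\{1,3\}$, not the first two empty nodes $\{1,2\}$ of $R_c$ in the order seen from $c$. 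The same phenomenon occurs whenever the two tails enter a common cycle at distinct nodes, or when $b$ lies on $a$'s forward path. So the decomposition $R_a = P_a \sqcup R_c$, $R_b = P_b \sqcup R_c$ is correct as a statement about sets, but it does not capture the dynamics, and the step you yourself flagged as the main obstacle --- that a driver fails in one order exactly when it fails in the other, and that the unordered pair of spots is the same --- is exactly the step your argument does not close.

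The paper's proof repairs precisely this by arguing about the actual \emph{parking paths} $y_{k} = s_{k} \leadsto \pi_{s}(k)$ and $y_{k+1} = s_{k+1} \leadsto \pi_{s}(k+1)$ rather than about the static reachable sets. If $y_{k} \cap y_{k+1} = \emptyset$, swapping the two cars simply swaps the two paths. If they intersect, let $v$ be the first node of $y_{k}$ lying on $y_{k+1}$; then every node of $y_{k}$ between $v$ and $\pi_{s}(k)$ was already occupied before car $k$ arrived, so $y_{k+1}$ must pass through $\pi_{s}(k)$ and end strictly beyond it, and after the swap each of the two cars parks at exactly the same node as before ($\pi_{s'}(k) = \pi_{s}(k)$, $\pi_{s'}(k+1) = \pi_{s}(k+1)$). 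Because the trajectories themselves encode where each driver enters the shared structure, the entry-point issue that breaks your $R_c$ analysis never arises. Your proposal could be salvaged by replacing the confluence decomposition of $R_a, R_b$ with this intersection analysis of the two parking paths, but as written the interacting case rests on a false assertion and the proof is incomplete.
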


\begin{proof}
Since each permutation $\sigma$ on $[m]$ can be obtained by a sequence of transpositions of consecutive elements, i.e., $\sigma = \tau_{r} \circ \tau_{r-1} \circ \cdots \circ \tau_{1}$, with $\tau_{i} = (k_{i} \: k_{i}+1)$, $1 \le k_{i} \le m-1$, $1 \le i \le r$, it suffices to prove the following: if $s$ is a parking function for $f$, then $s \circ \sigma$ is a parking function for $f$ for any transposition $\sigma$ of consecutive elements, i.e., for any permutation $\sigma$ on $[m]$ that swaps two consecutive elements and leaves the other elements fixed. The statement for general $\sigma$ follows from this by iteration.

Thus, let $s : [m] \rightarrow [n]$ be a parking function for $f : [n] \to [n]$ and $s' = s \circ \sigma$, where $\sigma = (k \: k+1)$, with $1 \le k \le m-1$, i.e., $\sigma(k) = k + 1$,
$\sigma(k + 1) = k$, and $\sigma(j) = j$ otherwise. In other words, $s'$ is the parking sequence obtained from $s$ by changing the order of the $k$-th and the $(k+1)$-th car.

In the following the mapping $f$ is fixed and we denote by $\pi_s=\pi_{(f,s)}$ the output-function of the parking function $s$ and consider the parking paths of the drivers: the path $y_{j} = s_{j} \leadsto \pi_{s}(j)$ denotes the parking path of the $j$-th driver of $s$ in the mapping graph $G_{f}$ starting with the preferred parking space $s_{j}$ and ending with the parking position $\pi_{s}(j)$.
In order to show that $s'$ is still a parking function, we have to show that all cars can successfully be parked using $s'$. In the following we do this and also determine the output-function $\pi_{s'}$ of $s'$ (and thus the parking paths $y_{j}' = s_{j}' \leadsto \pi_{s'}(j)$, $1 \le j \le m$, of the drivers of $s'$).

Clearly, the parking paths of the first $(k-1)$ cars are not affected by the swapping of the $k$-th and the $(k+1)$-th car and we have that $\pi_{s'}(1)=\pi_{s}(1)$, \dots, $\pi_{s'}(k-1)=\pi_{s}(k-1)$.
For the $k$-th and the $(k+1)$-th car we will distinguish between two cases according to the parking paths $y_{k} = s_{k} \leadsto \pi_{s}(k)$ and $y_{k+1} = s_{k+1} \leadsto \pi_{s}(k+1)$.
\begin{enumerate}
\item[$(a)$] Case $y_{k} \cap y_{k+1} = \emptyset$: Since the parking paths $y_{k}$ and $y_{k+1}$ are disjoint,
swapping the $k$-th and the $(k+1)$-th car simply also swaps the corresponding parking paths, i.e., $y_{k}' = y_{k+1}$ and $y_{k+1}' = y_{k}$, and in particular $\pi_{s'}(k) = \pi_{s}(k+1)$ and $\pi_{s'}(k+1) = \pi_{s}(k)$.
\item[$(b)$] Case $y_{k} \cap y_{k+1} \neq \emptyset$: Let us denote by $v$ the first node in the path $y_{k}$ that also occurs in $y_{k+1}$. Then, according to the parking procedure, the parking paths can be decomposed as follows: 
\begin{equation*}
  y_{k} = s_{k} \leadsto v \leadsto \pi_{s}(k) \quad \text{and} \quad y_{k+1} = s_{k+1} \leadsto v \leadsto \pi_{s}(k) \leadsto \pi_{s}(k+1),
\end{equation*}
i.e. $\pi_{s}(k+1)$ is a proper successor of $\pi_{s}(k)$, i.e., $\pi_{s}(k) \prec_{f} \pi_{s}(k+1)$. Thus, when swapping the $k$-th and the $(k+1)$-th car, both cars can also be parked yielding the parking paths
\begin{equation*}
  y_{k}' = s_{k+1} \leadsto v \leadsto \pi_{s}(k) \quad \text{and} \quad y_{k+1}' = s_{k} \leadsto v \leadsto \pi_{s}(k) \leadsto \pi_{s}(k+1).
\end{equation*}
In particular, we obtain $\pi_{s'}(k) = \pi_{s}(k)$ and $\pi_{s'}(k+1) = \pi_{s}(k+1)$.
\end{enumerate}
Thus, in any case we get $\{\pi_{s}(k), \pi_{s}(k+1)\} = \{\pi_{s'}(k), \pi_{s'}(k+1)\}$, and consequently swapping the $k$-th and the $(k+1)$-th car does not change the parking paths of the subsequent cars and we obtain $\pi_{s'}(j)=\pi_{s}(j)$, $k+2 \le j \le m$.
So all drivers in the parking sequence $s'$ are successful and $s'$ is indeed a parking function for $f$.
\end{proof}

\subsection{Alternative characterizations of parking functions}

Using the fact that reordering the elements of a function does not have any influence on whether it is a parking function or not, one can obtain the following well-known simpler characterization of ordinary parking functions $s: [n] \to [n]$ (see, e.g., \cite{Stanley1997}): A sequence $s \in [n]^{n}$ is a parking function if and only if it is a major function, i.e., the sorted rearrangement $s'$ of the sequence $s$ satisfies:
\begin{equation*}
  s_{j}' \leq j, \quad \text{for all $j \in [n]$}.
\end{equation*}

This statement can be reformulated in the following way: A sequence $s \in [n]^{n}$ is a parking function if and only if for every $j \in [n]$, it does not contain more than $(n-j)$ elements that are larger than $j$. Or again in other words, there must be at least $j$ elements that are not larger than $j$:
\begin{equation}\label{eqn:charactPF}
  |\left\lbrace k \in [n]: s_{k} \leq j \right\rbrace| \geq j, \quad \text{for all $j \in [n]$}.
\end{equation}

Now, this characterization of parking functions can easily be generalized to parking functions for trees and mappings.
Indeed, in \eqref{eqn:charactPF} we merely need to replace the $\leq$ and $\geq$ relation which come from the order on the elements $1, 2, \ldots, n$ represented by the one-way street of length $n$ by the binary relation given by the respective tree or mapping.
The following characterization of $(n,n)$-mapping parking functions (which might also be trees) is now possible. 
See Figure~\ref{fig:ex_characterization} for an illustration of Lemma~\ref{lem:char_parking} with a tree.

\begin{figure}
\begin{center}
\begin{tabular}[b]{|c|c|c|c|} \hline 
$j$ & $p(j)$ & $q_{1}(j)$ & $q_{2}(j)$ \\\hline
1 & 2& 3& 3\\\hline
2 & 2& 2& 2\\\hline
3 & 4& 5& 4\\\hline
4 & 1& 1& 3\\\hline
5 & 7& 7& 7\\\hline
6 & 1& 0& 1\\\hline
7 & 1& 1& 1\\\hline
\end{tabular}
\includegraphics[height=2.8cm]{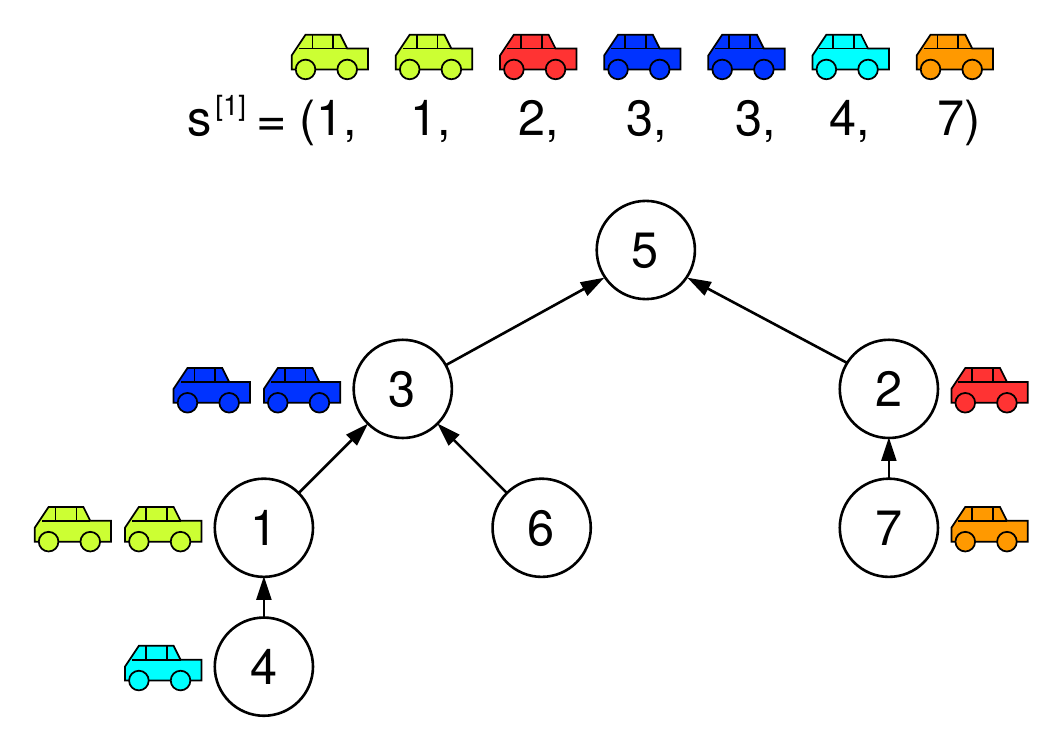}
%\quad
\includegraphics[height=2.8cm]{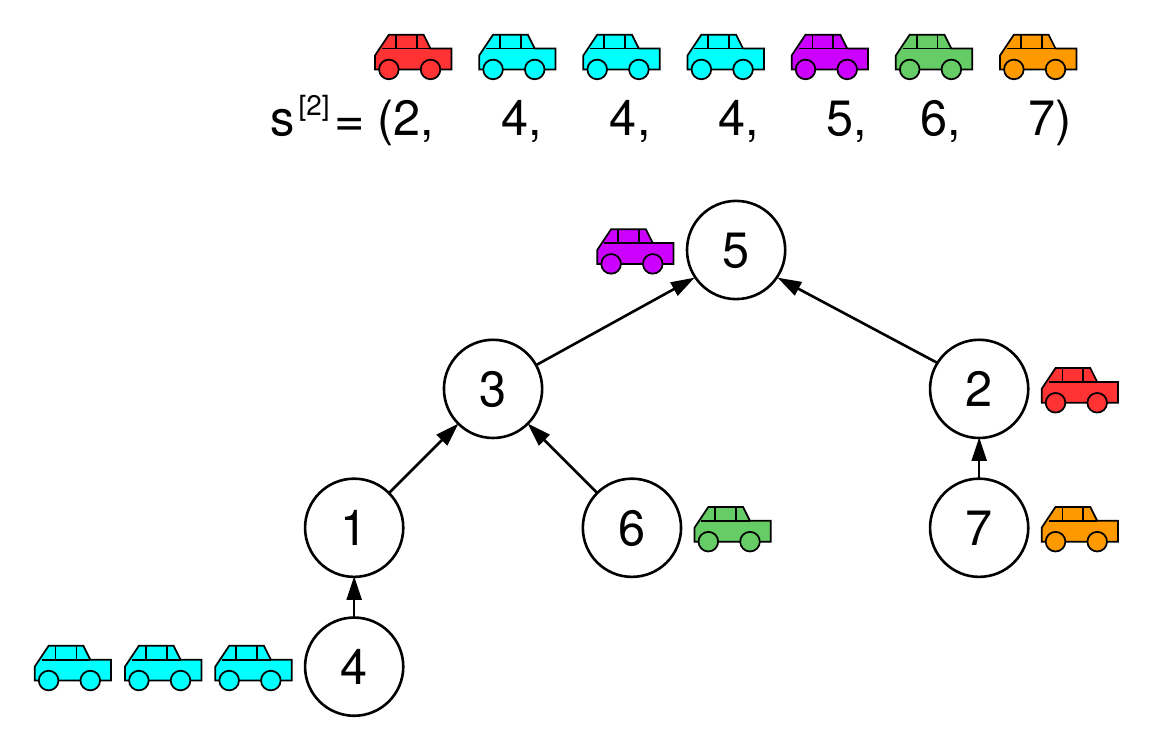}
\end{center}
\caption{Exemplifying the characterization of generalized parking functions given in Lemma~\ref{lem:char_parking} for a tree $T$ of size $7$. The (sorted) sequence $s^{[1]} = (1,1,2,3,3,4,7)$ represented on the left-hand-side does not give a parking function for $T$, whereas the (sorted) sequence $s^{[2]} = (2,4,4,4,5,6,7)$ represented on the right-hand-side does. This can be seen in the following way, where we denote by $q_{1}(j)$ and $q_{2}(j)$ respectively, the quantity $q(j)$ for $s^{[1]}$ and $s^{[2]}$, respectively: for the sequence $s^{[1]}$ we have $q_{1}(6)=0<1=p(6)$ thus violating the condition, whereas each element $p(j)$ is smaller or equal to the corresponding element $q_{2}(j)$.\label{fig:ex_characterization}}
\end{figure}

\begin{lemma}\label{lem:char_parking}
Given an $n$-mapping $f$ and a sequence $s \in [n]^{n}$, let $p(j)$ denote the number of predecessors of $j$, i.e.,  $p(j)\colonequals|\left\lbrace i \in [n]: i \preceq_f j \right\rbrace|$  and $q(j)$ denote the number of drivers whose preferred parking spaces are predecessors of $j$, i.e., $q(j)\colonequals|\left\lbrace k \in [n]: s_{k} \preceq_f j \right\rbrace|$ .
Then $s$ is a mapping parking function for $f$ if and only if
\begin{equation*}
  q(j) \ge p(j), \quad \text{for all $j \in [n]$}.
\end{equation*}
\end{lemma}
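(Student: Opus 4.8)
The plan is to reformulate the statement and treat the two implications separately, exploiting one structural fact about the set of predecessors $P_j \colonequals \{i \in [n] : i \preceq_f j\}$ of a node $j$ (so that $p(j) = |P_j|$): no parking path can enter $P_j$ from outside. Indeed, if $f(v) \preceq_f j$, then already $v \preceq_f j$, since $f^{t}(f(v)) = j$ gives $f^{t+1}(v) = j$; hence $v \notin P_j$ implies $f(v) \notin P_j$. As a driver only ever moves along the edges of $G_f$, a node of $P_j$ can be occupied only by a driver whose preferred space lies in $P_j$, and there are exactly $q(j)$ such drivers. Since $m = n$, the sequence $s$ is a parking function if and only if every node is occupied in the final configuration (all $n$ drivers succeed precisely when all $n$ spaces are filled). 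For the necessity of the condition I would then argue: if $s$ is a parking function, all $p(j)$ nodes of $P_j$ are occupied, each by a distinct driver preferring a node of $P_j$ (the occupant of $u \in P_j$ has $s_k \preceq_f u \preceq_f j$), which forces $p(j) \le q(j)$.

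For sufficiency I would argue by contraposition: assuming $s$ is not a parking function, I produce a node $j$ with $q(j) < p(j)$. If $s$ is not a parking function, some space is left empty; since an occupied space never becomes empty again, an empty node $a$ is never actually reached by any driver, as a driver arriving at $a$ would park there and fill it. Taking $j = a$, I consider the $q(a)$ drivers whose preferred space lies in $P_a$. Each such driver starts at some $s_k \preceq_f a$, so its forward path $s_k \to f(s_k) \to \cdots$ passes through $a$; as it cannot arrive at $a$, the first empty node on its path lies strictly before $a$, i.e.\ in $P_a \setminus \{a\}$, where it parks. Thus all $q(a)$ of these drivers succeed, occupying distinct nodes of $P_a \setminus \{a\}$, whence $q(a) \le p(a) - 1 < p(a)$, the desired witness.

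The main obstacle, and the point deserving the most care, is precisely this sufficiency direction: justifying that an empty node is genuinely never visited, and that every driver aiming into $P_a$ is therefore forced to park before reaching $a$. Here one must use that in a functional digraph the forward orbit of $s_k$ is a single chain eventually running into a cycle, so that $a$ really does lie on this orbit whenever $s_k \preceq_f a$, and that the presence of the empty predecessor $a$ rules out the only way a driver could fail, namely becoming trapped in a fully occupied cycle. The case in which $a$ is itself a cyclic node warrants an explicit check, since then $P_a$ is the entire weakly connected component; but the same reasoning applies, as a driver circulating on a cycle containing the empty node $a$ must park upon first reaching $a$. I expect the remaining bookkeeping (distinctness of the parking positions and the final counting) to be routine, and I note that, unlike the classical one-dimensional characterisation in \eqref{eqn:charactPF}, this argument does not need the reordering lemma.
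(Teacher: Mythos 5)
Your proof is correct, and for the harder (sufficiency) direction it takes a genuinely different route from the paper. The necessity direction essentially coincides with the paper's: both rest on the closure fact that $v \notin P(j)$ implies $f(v) \notin P(j)$, so only the $q(j)$ drivers with preference in $P(j)$ can ever occupy nodes of $P(j)$, and with $m=n$ a successful (hence full) configuration forces $p(j) \le q(j)$. For sufficiency, the paper argues directly: it fixes a node $j$ with $q(j)\ge p(j)$ and proves that $j$ ends up occupied, distinguishing whether $j$ is cyclic or not --- in the non-cyclic case a pigeonhole over the preimages $i_1,\dots,i_r$ of $j$ produces some $i_\ell$ with $q(i_\ell)>p(i_\ell)$, forcing an overflow across the edge $(i_\ell,j)$, and the cyclic case is reduced to the non-cyclic one by deleting the never-traversed edge $(j,f(j))$. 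You instead prove the contrapositive: a node $a$ left empty in the final configuration is never visited by any driver (a visit would leave it occupied, since occupied nodes stay occupied), so each of the $q(a)$ drivers with preference in $P(a)$ --- whose deterministic forward orbit necessarily passes through $a$ --- must park at an earlier, hence distinct, node of $P(a)\setminus\{a\}$, giving $q(a)\le p(a)-1<p(a)$; this also rules out failure of these drivers, since a driver who never parks would eventually traverse $a$. Note that your argument needs no cyclic/non-cyclic case distinction at all: the explicit check you propose for cyclic $a$ is subsumed, because the no-visit property and the forward-orbit argument are insensitive to whether $a$ lies on a cycle; the only marginal point worth a sentence is that no driver can have $s_k=a$ (he would visit $a$ immediately), so ``the first empty node strictly before $a$'' is well defined. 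Your contrapositive is shorter and uniform, and in fact mirrors the strategy the paper itself uses for the general case $m\le n$ in Lemma~\ref{lem:char_parking_general}, where the argument also proceeds from a failed configuration; what the paper's direct argument buys in exchange is the slightly stronger local statement that any individual node $j$ with $q(j)\ge p(j)$ becomes occupied, irrespective of whether the inequality holds elsewhere.
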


\begin{proof}
First, assume that $q(j) < p(j)$ holds for some $j \in [n]$. Let us denote by $P(j) \colonequals \{i \in [n] : i \preceq_{j} j\}$ the set of predecessors of $j$. Obviously, if $s_{k} \not\in P(j)$ then the $k$-th driver will not get a parking space in $P(j)$. Thus, at most $q(j) = |\{k \in [n]: s_{k} \in P(j)\}|$ drivers are able to park in $P(j)$. In other words, at least $p(j) - q(j) > 0$ parking spaces in $P(j)$ remain free. Since there is the same number of cars and of parking spaces, this means that at least one driver will not be able to park successfully. Thus $s$ is not a parking function for $f$.

Next, assume that $q(j) \ge p(j)$ holds for all $j \in [n]$. 
It will be sufficient to show the following: Let $s \in [n]^{n}$ be a parking sequence such that $q(j) \ge p(j)$ for some $j$, then node $j$ will be occupied after applying the parking procedure. Due to the assumption above we may then conclude that all nodes will be occupied after applying the parking procedure and thus that all $n$ drivers are successful, which means that $s$ indeed is a parking function for $f$.

To prove the assertion above we distinguish between two cases:
\begin{enumerate}
\item[$(a)$] $j$ is not a cyclic node: Then the set $P(j)$ of predecessors of $j$ is a tree. If there is a driver $k$ with preferred parking space $s_{k} = j$ then in any case node $j$ will be occupied. Thus let us assume that $s_{k} \neq j$, for all $k \in [n]$. Let us further assume that $i_{1}, \dots, i_{r}$ are the preimages of $j$, i.e., $f(i_{t})=j$, $1 \le t \le r$. Since $q(j) \ge p(j)$, but no driver wishes to park at $j$, it holds that there exists a preimage $i_{\ell}$, such that $q(i_{\ell}) > p(i_{\ell})$. This means that there must be at least one driver appearing in $P(i_{\ell})$ that is not able to get a parking space in $P(i_{\ell})$ and thus, according to the parking procedure, he has to pass the edge $(i_{\ell}, j)$. Consequently, node $j$ will be occupied.
\item[$(b)$] $j$ is a cyclic node: 
If the edge $(j, f(j))$ is passed by some driver during the application of the parking procedure this necessarily implies 
that the node $j$ is occupied. 
Thus let us assume that the edge $(j, f(j))$ will never be passed while carrying out the parking procedure. Then we may remove the edge $(j, f(j))$ from $G_{f}$ without influencing the outcome of the parking procedure. By doing so, node $j$ becomes a non-cyclic node and according to case $(a)$, node $j$ will be occupied.
\end{enumerate}
\end{proof}

Now let us turn to parking functions, where the number of drivers does not necessarily coincide with the number of parking spaces.
It is well-known and easy to see that a parking sequence $s : [m] \to [n]$ on a one-way street is a parking function if and only if 
\begin{equation}
  |\left\lbrace k \in [m]: s_{k} \ge j \right\rbrace| \le n-j+1, \quad \text{for all $j \in [n]$}.
\end{equation}
This characterization can be generalized to $(n,m)$-tree parking functions as follows.

\begin{lemma}\label{lem:char_parking_general}
Given a rooted labelled tree $T$ of size $|T|=n$ and a sequence $s \in [n]^{m}$.
Then $s$ is a tree parking function for $T$ if and only if
\begin{equation*}
  \lvert \left\lbrace k \in [m]: s_{k} \in T' \right\rbrace \rvert \le |T'|, \quad \text{for all subtrees $T'$ of $T$ containing $\rt(T)$}.
\end{equation*}
\end{lemma}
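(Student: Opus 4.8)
The plan is to prove both implications by direct combinatorial arguments, mirroring the proof of Lemma~\ref{lem:char_parking}. First I would reinterpret the index set of the condition: a subtree $T'$ of $T$ containing $\rt(T)$ is precisely an up-set (filter) for the relation $\preceq_f$ induced by $T$, i.e.\ a set of nodes closed under taking successors (following edges towards the root). For the linear tree $1-2-\cdots-n$ these up-sets are exactly the suffixes $\{j,j+1,\dots,n\}$, so the statement specializes to the known one-way-street characterization, which is reassuring.

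For necessity, suppose $s$ is a parking function for $T$ and fix an up-set $T'$ containing $\rt(T)$. If a driver has $s_k \in T'$, then his whole parking path $s_k \leadsto \rt(T)$ stays inside $T'$, because $T'$ is closed under successors; hence he parks at some node of $T'$. As distinct drivers occupy distinct nodes, at most $|T'|$ drivers can have a preferred space in $T'$, which is the desired inequality.

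For sufficiency I would argue by contraposition. Assume $s$ is \emph{not} a parking function, run the parking procedure to completion (failing drivers simply leave), and let $T'$ be the set of nodes whose entire path to the root is occupied in the final configuration. I claim $T'$ is a witness violating the inequality. The key observations are: (i) $T'$ is an up-set containing $\rt(T)$ --- closure under successors is immediate since the path from a successor is a sub-path of the path from the node itself, and $\rt(T)$ is occupied because any failing driver necessarily reaches the (then-occupied) root; (ii) every node of $T'$ is occupied, so exactly $|T'|$ successful drivers park inside $T'$; (iii) each such driver in fact has his preferred space in $T'$; and (iv) any failing driver also has his preferred space in $T'$ but does not park, so strictly more than $|T'|$ drivers prefer nodes of $T'$.

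The main obstacle is the timing subtlety hidden in step (iii): $T'$ is defined through the \emph{final} configuration, whereas drivers act at different moments, so I must check that a driver parking at some $p \in T'$ really started from a node of $T'$. This follows by tracking monotonicity of parking paths: such a driver visits consecutive ancestors $s_k = u_0, u_1, \dots, u_t = p$ heading towards the root, with all of $u_0,\dots,u_{t-1}$ occupied at the moment he passes them (which is why he does not stop there) and remaining so afterwards, while the path from $p$ to the root is occupied because $p \in T'$; concatenating shows the whole path $s_k \leadsto \rt(T)$ is occupied, i.e.\ $s_k \in T'$. Combining (ii)--(iv) then yields at least $|T'|+1$ drivers with preferred space in $T'$, contradicting the hypothesis and completing the proof.
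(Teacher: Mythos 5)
Your proof is correct and takes essentially the same route as the paper: the necessity direction is the same counting argument, and for sufficiency both proofs argue by contraposition and exhibit as witness the maximal occupied subtree containing $\rt(T)$, i.e.\ the up-set of nodes whose entire path to the root is occupied. The only difference is cosmetic: the paper freezes the configuration immediately after the first unsuccessful driver and deduces from the maximality of $T'$ that every driver parked in $T'$ had his preferred space in $T'$, whereas you run the procedure to completion and verify that point explicitly via monotonicity of occupation --- your step (iii) is precisely the argument the paper compresses into the phrase ``due to the maximality condition of $T'$''.
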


Recall that $T'$ is called a subtree of $T$ if $T'$ is a subgraph of $T$ that is a tree itself.

\begin{proof}
First, let us assume that there exists a subtree $T'$ of $T$ containing $\rt(T)$, such that $q(T') \colonequals |Q(T')| \colonequals |\left\lbrace k \in [m]: s_{k} \in T' \right\rbrace| > |T'|$. 
Clearly, the possible parking spaces for any driver $k$ with preferred parking space $s_{k} \in T'$ form a subset of $T'$.
Here, the number of such drivers $q(T')$ exceeds the amount of parking spaces $|T'|$ and thus at least one of the drivers in $Q(T')$ will be unsuccessful. Thus $s$ is not a parking function for $T$.

Next, let us assume that $s$ is not a parking function for $T$. Let us further assume that $\ell \in [m]$ is the first unsuccessful driver in $s$ when applying the parking procedure. We consider the situation after the first $\ell$ drivers: Define $T'$ as the maximal subtree of $T$ containing $\rt(T)$ and only such nodes that are occupied by one of the first $\ell-1$ cars. Of course, since the $\ell$-th driver is unsuccessful, the $\rt(T)$ has to be occupied by one of the first $\ell-1$ cars, anyway. Due to the maximality condition of $T'$, it holds that each driver $k$ that has parked in $T'$ must have had his preferred parking space in $T'$, thus $|\{k \in [\ell-1] : s_{k} \in T'\}| = |T'|$. Since the $\ell$-th driver is unsuccessful, his preferred parking space is also in $T'$, yielding $|\{k \in [\ell] : s_{k} \in T'\}| > |T'|$. Of course, this implies $|\{k \in [m] : s_{k} \in T'\}| > |T'|$, for the subtree $T'$ of $T$ containing $\rt(T)$.
\end{proof}

We remark that the characterization above could be also extended to mapping parking functions (where one has to consider connected subgraphs of $G_{f}$ containing all cyclic nodes of the respective component).
Since we will not make use of it in the remainder of this paper we omit it here.

\subsection{Extremal cases for the number of parking functions}

Given an $n$-mapping $f : [n] \to [n]$ (which might be a tree), let us denote by $S(f,m)$ the number of parking functions $s \in [n]^{m}$ for $f$ with $m$ drivers. So far we are not aware of enumeration formul{\ae} for the numbers $S(f,m)$  for general $f$.
In Sections~\ref{sec:drivers_equal_nodes} and \ref{sec:general_case} however, we will compute the total number $F_{n,m} \colonequals \sum_{T \in \mathcal{T}_{n}}S(T,m)$ and $M_{n,m} \colonequals \sum_{f \in \mathcal{M}_{n}}S(f,m)$ of $(n,m)$-tree and $(n,m)$-mapping parking functions, respectively. 

Before continuing, we first state the obvious fact that isomorphic mappings (or trees) yield the same number of mapping (or tree) parking functions, since one simply has to adapt the preferred parking spaces of the drivers according to the relabelling.

\begin{prop}
  Let $f$ and $f'$ two isomorphic $n$-mappings, i.e., there exists a bijective function $\sigma : [n] \to [n]$, such that $f' = \sigma \circ f \circ \sigma^{-1}$. Then for $0 \le m \le n$ it holds
	\begin{equation*}
	  S(f,m) = S(f',m).
	\end{equation*}
\end{prop}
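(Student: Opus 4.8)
The plan is to set up an explicit bijection between the set of parking functions for $f$ and the set of parking functions for $f'$, using the relabelling $\sigma$, and to check that this bijection is well-defined in both directions. Given a parking function $s : [m] \to [n]$ for $f$, I would define $s' \colonequals \sigma \circ s$, i.e., $s'_k = \sigma(s_k)$ for each $k \in [m]$, so that each driver's preferred parking space in the $f'$-graph is the $\sigma$-image of his preferred space in the $f$-graph. Since $\sigma$ is a bijection on $[n]$, the assignment $s \mapsto \sigma \circ s$ is itself a bijection between $[n]^m$ and $[n]^m$, with inverse $s' \mapsto \sigma^{-1} \circ s'$. It therefore suffices to prove the single implication that $s$ a parking function for $f$ implies $s' = \sigma \circ s$ a parking function for $f'$; applying the same statement to $f'$, $\sigma^{-1}$, and $s'$ (noting $f = \sigma^{-1} \circ f' \circ \sigma$) gives the reverse implication and hence $S(f,m) = S(f',m)$.

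The key step is to verify that $\sigma$ is a graph isomorphism $G_f \to G_{f'}$, so that it carries parking paths to parking paths. From $f' = \sigma \circ f \circ \sigma^{-1}$ we get $f'(\sigma(i)) = \sigma(f(i))$ for all $i \in [n]$, which says precisely that $(i, f(i))$ is an edge of $G_f$ if and only if $(\sigma(i), f'(\sigma(i))) = (\sigma(i), \sigma(f(i)))$ is an edge of $G_{f'}$. Consequently $\sigma$ maps directed paths in $G_f$ bijectively onto directed paths in $G_{f'}$, and it respects the successor relation: $i \preceq_f j$ if and only if $\sigma(i) \preceq_{f'} \sigma(j)$. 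The cleanest way to finish is then to track the parking procedure directly. I would argue by induction on the number of drivers processed that, if the first $k-1$ drivers of $s$ park in $G_f$ at nodes whose $\sigma$-images are exactly the occupied nodes of $G_{f'}$ after the first $k-1$ drivers of $s'$, then the $k$-th driver of $s'$ follows the $\sigma$-image of the parking path of the $k$-th driver of $s$ and ends up at the $\sigma$-image of his parking position; that is, $\pi_{(f',s')}(k) = \sigma(\pi_{(f,s)}(k))$. Since $s$ is a parking function every such step succeeds, so every driver of $s'$ succeeds as well.

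Alternatively, and more slickly, I would invoke Lemma~\ref{lem:char_parking} to avoid tracing the dynamics, at least in the case $m = n$: writing $p_f, q_f^{(s)}$ for the two counting functions attached to $(f,s)$ and $p_{f'}, q_{f'}^{(s')}$ for those attached to $(f', s')$, the isomorphism $\sigma$ gives a bijection between the predecessor set of $j$ in $G_f$ and the predecessor set of $\sigma(j)$ in $G_{f'}$, whence $p_{f'}(\sigma(j)) = p_f(j)$; and because $s' = \sigma \circ s$, the set $\{k : s'_k \preceq_{f'} \sigma(j)\}$ equals $\{k : s_k \preceq_f j\}$, so $q_{f'}^{(s')}(\sigma(j)) = q_f^{(s)}(j)$. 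As $j$ ranges over $[n]$ so does $\sigma(j)$, so the inequalities $q_f^{(s)}(j) \ge p_f(j)$ for all $j$ are equivalent to $q_{f'}^{(s')}(j') \ge p_{f'}(j')$ for all $j'$, giving the claim immediately for $m = n$. I do not expect any genuine obstacle here; the only point requiring a little care is not to conflate the driver index $k$ with the address $s_k$ when relabelling, and to state clearly that $\sigma$ acts on addresses (parking spaces) while the driver order is left untouched. For the general statement with arbitrary $0 \le m \le n$, the direct parking-dynamics argument of the previous paragraph is the safe route, since the characterization invoked above was only proved for $m = n$.
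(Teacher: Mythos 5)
Your proposal is correct and matches the paper's own argument: the paper likewise observes that $f' = \sigma \circ f \circ \sigma^{-1}$ makes $\sigma$ a graph isomorphism $G_f \to G_{f'}$ and then asserts, by induction on the drivers, that $s$ is a parking function for $f$ if and only if $\sigma \circ s$ is one for $f'$, which is exactly your main line of reasoning (your filled-in induction and the remark that Lemma~\ref{lem:char_parking} only covers $m=n$ are sound refinements, not a different route). Nothing further is needed.
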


\begin{proof}
First note that the corresponding functional digraphs $G_{f} = ([n],E)$ and $G_{f'} = ([n],E')$ are isomorphic in the graph theoretic sense, since
\begin{align*}
  e = (i,j) \in E & \Leftrightarrow j=f(i)  \Leftrightarrow \sigma(j)= \sigma(f(i)) \Leftrightarrow \sigma(j)= f'(\sigma(i)) \\
  & \Leftrightarrow \sigma(e) = (\sigma(i),\sigma(j)) \in E'.
\end{align*}
It is then an easy task to show via induction that a function $s = (s_{1}, \dots, s_{m}) \in [n]^{m}$ is a parking function for $f$ if and only if $s'; = \sigma \circ s = (\sigma(s_{1}), \dots, \sigma(s_{m}))$ is a parking function for $f'$.
\end{proof}

In the following we consider the extremal cases of $S(f,m)$. Obviously, each surjective function $s \in [n]^{m}$ is a parking function for every mapping $f \in \mathcal{M}_{n}$, which yields the trivial bounds
\begin{equation}
  n^{\underline{m}} \le S(f,m) \le n^{m}, \quad \text{for $f \in \mathcal{M}_{n}$}.
\end{equation}
These bounds are actually tight.
Indeed, for the identity $\id_{n} : j \mapsto j$, for $j \in [n]$, we have $S(\id_{n}, m) = n^{\underline{m}}$ since no collisions may occur.
Moreover, for
\[
\cycle_{n} : j \mapsto \begin{cases} j+1, & \quad \text{for $1 \le j \le n-1$},\\ 1, & \quad \text{for $j=n$}, \end{cases}
\]
a cycle of length $n$, it holds that $S(\cycle_{n}, m) = n^{m}$.

The situation becomes more interesting when we restrict ourselves to trees. The following simple tree operation will turn out to be useful in order to identify the extremal cases. Let $T$ be a rooted labelled tree and $v$ a node of $T$.
Furthermore, let $U$ be a subtree of $T$ attached to $v$ such that $T \setminus U$ is still a tree, i.e., the graph consisting of all edges not contained in $U$ has one connected component.
For a node $w$ not contained in $U$, we denote by $\reallocate{T}{U}{v}{w}$ the tree operation of first detaching the subtree $U$ from $v$ and then attaching it to $w$.
See Figure~\ref{fig:reallocate} for an illustration. 

\begin{figure}
\begin{center}
\parbox{4cm}{
\begin{center}
\includegraphics[height=5cm]{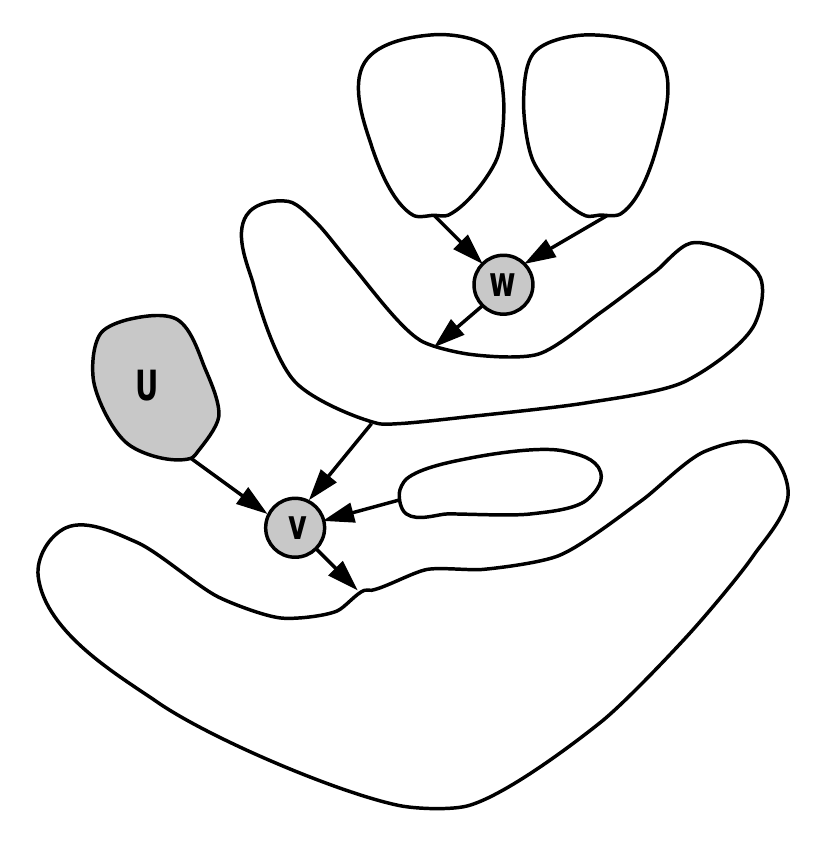}\\
{\large{$T$}}
\end{center}
}
\quad \quad \quad 
\raisebox{0cm}{$\Rightarrow$}
\quad
\parbox{5cm}{
\begin{center}
\includegraphics[height=5cm]{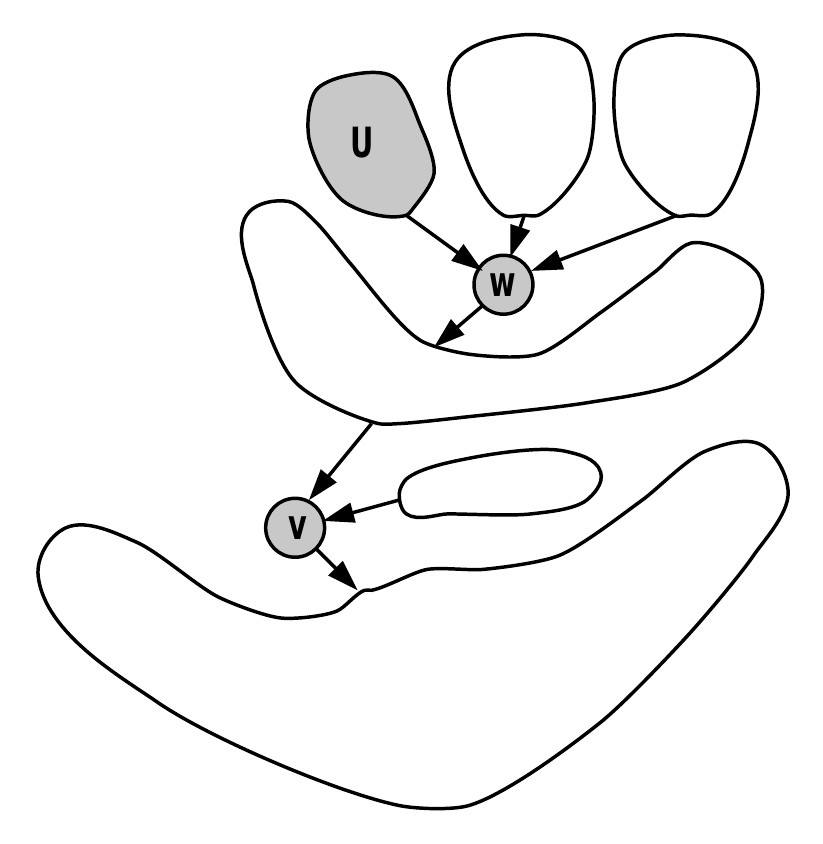}\\
{\large{$\tilde{T}$}}
\end{center}
}
\end{center}
\caption{Illustrating the tree operation of reallocating the subtree $U$ from $v$ to $w$ in $T$ which yields the tree $\tilde{T}$.
Here the nodes $v$ and $w$ satisfy $w \preceq_{T} v$, as required in the proof of Lemma~\ref{lem:reallocate_tree}.\label{fig:reallocate}}
\end{figure}

\begin{lemma}\label{lem:reallocate_tree}
Let $T$ be a rooted labelled tree and $w \preceq_{T} v$, for two nodes $v, w \in T$. Furthermore, let $U$ be a subtree of $T$ attached to $v$ that does not contain $w$ such that $T \setminus U$ is still a tree.
Let us denote by $\tilde{T}$ the tree which is obtained by reallocating $U$ from $v$ to $w$, i.e., 
\begin{equation*}
  \tilde{T} = \reallocate{T}{U}{v}{w}. 
\end{equation*}
Then it holds that
\begin{equation*}
  S(\tilde{T}, m) \ge S(T,m).
\end{equation*}
\end{lemma}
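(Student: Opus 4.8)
The plan is to prove the inequality at the level of the parking-function sets themselves, by showing that every parking function for $T$ is also a parking function for $\tilde{T}$. Since $T$ and $\tilde{T}$ carry the same labelled vertex set $[n]$, a sequence $s \in [n]^{m}$ is simultaneously a candidate for both trees, so it suffices to establish the inclusion $\{s : s \text{ is a PF for } T\} \subseteq \{s : s \text{ is a PF for } \tilde{T}\}$, which immediately gives $S(\tilde{T},m) \ge S(T,m)$. The natural tool is Lemma~\ref{lem:char_parking_general}: $s$ is a parking function for a tree precisely when $\lvert\{k : s_{k} \in T'\}\rvert \le |T'|$ for every subtree $T'$ containing the root. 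Thus I would reduce the whole statement to a comparison of the two families of root-containing subtrees of $T$ and of $\tilde{T}$.

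The key structural claim I would establish is that the family of root-containing subtrees of $\tilde{T}$ is contained in the corresponding family for $T$ (viewed as families of vertex sets). First I would recall that a connected subgraph of a rooted tree containing the root is the same thing as a vertex set closed under taking parents, so root-containing subtrees are exactly the ancestor-closed sets containing $\rt(T)$. Now $\tilde{T}$ differs from $T$ only in the parent of the root $r_{U}$ of the reallocated subtree $U$: this parent is $v$ in $T$ and $w$ in $\tilde{T}$, while all other parent relations agree; in particular the entire $T$-path from $w$ up to $\rt(T)$ avoids $U$ (since $w \notin U$) and is therefore identical in both trees.

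With this in hand I would split into two cases for a root-containing subtree $\tilde{T}'$ of $\tilde{T}$. If $r_{U} \notin \tilde{T}'$, then $\tilde{T}'$ meets $U$ nowhere and lies entirely in the common part $T \setminus U$, hence is ancestor-closed in $T$ as well. If $r_{U} \in \tilde{T}'$, then ancestor-closedness in $\tilde{T}$ forces $w \in \tilde{T}'$ and thus the whole $T$-path from $w$ to the root into $\tilde{T}'$; since $w \preceq_{T} v$, this path passes through $v$, so $v \in \tilde{T}'$, and therefore $\tilde{T}'$ is ancestor-closed in $T$ too. In either case $\tilde{T}'$ is a root-containing subtree of $T$ with the same vertex set, so the constraint it imposes on $s$ is literally one of the constraints already guaranteed by $s$ being a parking function for $T$. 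Consequently every parking function for $T$ satisfies all the $\tilde{T}$-constraints and is a parking function for $\tilde{T}$, which proves the lemma.

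The only delicate point, and the step I expect to require the most care, is the case $r_{U} \in \tilde{T}'$: one must verify precisely that $w \in \tilde{T}'$ together with $w \preceq_{T} v$ drags $v$ (and the full path) into $\tilde{T}'$, and that this path genuinely avoids $U$ so that it coincides in both trees. I would note that the reverse inclusion fails in general, since $T$ typically possesses additional root-containing subtrees containing $v$ and $r_{U}$ but not $w$; this asymmetry is exactly what makes the inequality one-sided and confirms the intended direction $S(\tilde{T},m) \ge S(T,m)$.
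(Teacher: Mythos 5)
Your proposal is correct and takes essentially the same approach as the paper: both arguments invoke Lemma~\ref{lem:char_parking_general} and split on whether a root-containing subtree $\tilde{T}'$ of $\tilde{T}$ meets $U$, concluding in either case that the vertex set of $\tilde{T}'$ also carries a root-containing subtree of $T$, so every parking function for $T$ is one for $\tilde{T}$. The only difference is presentational: the paper realizes your second case as an inverse reallocation of $R = U \cap \tilde{T}'$ from $w$ back to $v$, while you argue directly via ancestor-closed vertex sets, making explicit the facts (that $v$ lies on the $w$-to-root path and that this path avoids $U$) which the paper's reallocation step uses implicitly.
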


\begin{proof}
 By applying Lemma~\ref{lem:char_parking_general} we will show that each parking function $s \in [n]^{m}$ for $T$ is also a parking function for $\tilde{T}$. For this purpose, let $s$ be a parking function for $T$ and consider a subtree $\tilde{T}'$ of $\tilde{T}$ containing the root of $\tilde{T}$. Note that by construction $\rt(\tilde{T}) = \rt(T)$. We distinguish between two cases to show that $\left|\left\lbrace k \in [m]: s_{k} \in \tilde{T}' \right\rbrace\right| \le |\tilde{T}'|$.
\begin{itemize}
\item[$(a)$] Case $U \cap \tilde{T}' = \emptyset$: In this case $\tilde{T}'$ is also a subtree of $T$ containing $\rt(T)$. Since $s$ is a parking function for $T$ it holds that $\left|\left\lbrace k \in [m]: s_{k} \in \tilde{T}' \right\rbrace\right| \le |\tilde{T}'|$ according to Lemma~\ref{lem:char_parking_general}.
\item[$(b)$] Case $U \cap \tilde{T}' = R \neq \emptyset$: According to the construction of $\tilde{T}$, $R$ is a subtree of $U$ that is attached to node $w$, which is itself a predecessor of $v$. 
Within the tree $\tilde{T}'$, let us reallocate the subtree $R$ from $w$ to $v$.
Then the resulting tree $T' \colonequals \reallocate{\tilde{T}'}{R}{w}{v}$ is a subtree of $T$ containing $\rt(T)$. According to Lemma~\ref{lem:char_parking_general} it holds that $|\left\lbrace k \in [m]: s_{k} \in T' \right\rbrace| \le |T'|$.
Since $T'$ and $\tilde{T}'$ have equal size and the nodes in the corresponding trees have the same labels, this also implies that $\left|\left\lbrace k \in [m]: s_{k} \in \tilde{T}' \right\rbrace\right| \le |\tilde{T}'|$.
\end{itemize}
\end{proof}

With this lemma we can easily obtain tight bounds on $S(T,m)$.
\begin{theorem}
  Let $\str_{n}$ be the rooted labelled tree of size $n$ with root node $n$ and the nodes $1, 2, \dots, n-1$ attached to it. Furthermore let $\chain_{n}$ be the rooted labelled tree of size $n$ with root node $n$ and node $j$ attached to node $(j+1)$, for $1 \le j \le n-1$. Then, for any rooted labelled tree $T$ of size $n$ it holds
	\begin{equation}\label{eqn:extremal_cases_FTm}
	  S(\str_{n},m) \le S(T,m) \le S(\chain_{n},m),
	\end{equation}
	yielding the bounds
	\begin{equation}\label{eqn:bounds_FTm}
	  n^{\underline{m}} + \binom{m}{2} (n-1)^{\underline{m-1}} \le S(T,m) \le (n-m+1) (n+1)^{m-1}, \quad \text{for $0 \le m \le n$}.
	\end{equation}
\end{theorem}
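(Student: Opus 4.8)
The plan is to reduce the statement to the two inequalities in \eqref{eqn:extremal_cases_FTm}, for which Lemma~\ref{lem:reallocate_tree} is the engine, and then to evaluate $S(\str_n,m)$ and $S(\chain_n,m)$ explicitly to obtain \eqref{eqn:bounds_FTm}. The key observation is that Lemma~\ref{lem:reallocate_tree} says a reallocation $\reallocate{T}{U}{v}{w}$ with $w \preceq_T v$ — that is, moving a pendant subtree $U$ \emph{towards the leaves}, from $v$ down to a descendant $w$ — can only increase the number of parking functions; read backwards, the same lemma says that moving a pendant subtree \emph{towards the root} can only decrease it. Since $S(\cdot,m)$ depends only on the isomorphism type of the rooted tree (by the preceding Proposition), it suffices to exhibit, for an arbitrary $T$, a finite sequence of downward moves turning $T$ into a copy of $\chain_n$, and a finite sequence of upward moves turning $T$ into $\str_n$.

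For the upper bound I would argue as follows: as long as $T$ is not a chain, some node $v$ has at least two children $c_1,c_2$; picking any leaf $\ell$ of the subtree rooted at $c_1$ and reallocating the whole subtree rooted at $c_2$ onto $\ell$ is a legal downward move (here $\ell \preceq_T v$ and $\ell$ does not lie in the moved subtree), and it strictly decreases the number of leaves of $T$ by exactly one. Iterating, the number of leaves drops to $1$, i.e.\ $T$ has become a path rooted at one end, which is isomorphic to $\chain_n$; monotonicity along the sequence then gives $S(T,m)\le S(\chain_n,m)$. The value $S(\chain_n,m)$ is exactly the number of ordinary parking functions on a one-way street with $n$ spaces and $m$ drivers, namely $P_{n,m}=(n+1-m)(n+1)^{m-1}$ as recalled in the Introduction.

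Symmetrically, for the lower bound, as long as $T$ is not a star some leaf $\ell$ sits at depth $\ge 2$; reallocating $U=\{\ell\}$ from its parent up to $\rt(T)$ is a legal upward move and strictly decreases the number of nodes at depth $\ge 2$ by one, so after finitely many steps $T$ becomes $\str_n$, yielding $S(\str_n,m)\le S(T,m)$. It remains to compute $S(\str_n,m)$, which I would do by counting ordered sequences $s\in[n]^m$ directly and classifying them by overflow at the root: a driver can only reach the root from a leaf, so $s$ parks everyone precisely when the number of drivers routed to the root (those preferring the root, together with the surplus drivers on over-requested leaves) is at most one. This splits into the cases where all $m$ preferences are distinct leaves, where exactly one preference is the root and the rest are distinct leaves, and where all preferences are leaves with exactly one leaf repeated once, contributing $(n-1)^{\underline m}$, $m\,(n-1)^{\underline{m-1}}$ and $\binom{m}{2}(n-1)^{\underline{m-1}}$ respectively; the first two collapse via $n^{\underline m}=n\,(n-1)^{\underline{m-1}}$ to give $S(\str_n,m)=n^{\underline m}+\binom{m}{2}(n-1)^{\underline{m-1}}$.

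The conceptual content, and the place to be careful, is keeping the monotonicity direction of Lemma~\ref{lem:reallocate_tree} consistent (a downward move, i.e.\ towards the leaves, is non-decreasing) and supplying a genuinely decreasing, well-founded measure for each reduction — number of leaves towards the chain, number of deep nodes towards the star — so that the procedures provably terminate at the intended extremal tree. The remaining work, the classical chain count and the elementary case analysis for the star together with the falling-factorial simplification, is routine. One could alternatively read both star-(in)equalities straight off the characterization in Lemma~\ref{lem:char_parking_general}, but the reallocation route makes the extremality most transparent.
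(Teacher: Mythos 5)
Your proposal is correct and follows essentially the same route as the paper: both inequalities in \eqref{eqn:extremal_cases_FTm} are obtained from Lemma~\ref{lem:reallocate_tree} (together with isomorphism invariance of $S(\cdot,m)$) via sequences of reallocations connecting $T$ to $\chain_{n}$ and $\str_{n}$, and the explicit bounds in \eqref{eqn:bounds_FTm} come from the classical one-way-street count for the chain and the injective/one-repeated-leaf case analysis for the star. Your explicit termination measures (number of leaves for the chain reduction, number of nodes at depth $\ge 2$ for the star reduction) and the detailed star enumeration merely supply steps the paper asserts without proof.
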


\begin{proof}
Each tree $T$ of size $n$ can be constructed from a tree $T_{0}$, which is isomorphic to $\str_{n}$, by applying a sequence of reallocations $T_{i+1} \colonequals \reallocate{T_{i}}{U_{i}}{v_{i}}{w_{i}}$, with $w_{i} \preceq_{T_{i}} v_{i}$, for $0 \le i \le k$, with $k \ge 0$.
Furthermore, starting with $T =: \tilde{T}_{0}$, there always exists a sequence of reallocations $\tilde{T}_{i+1} \colonequals \reallocate{\tilde{T}_{i}}{\tilde{U}_{i}}{\tilde{v}_{i}}{\tilde{w}_{i}}$, with $\tilde{w}_{i} \preceq_{\tilde{T}_{i}} \tilde{v}_{i}$, for $0 \le i \le \tilde{k}$, with $\tilde{k} \ge 0$, such that the resulting tree is isomorphic to $\chain_{n}$. Thus, equation~\eqref{eqn:extremal_cases_FTm} follows immediately from Lemma~\ref{lem:reallocate_tree}.

The upper bound in \eqref{eqn:bounds_FTm} is the well-known formula for the number parking functions in a one-way street (which corresponds to the number of tree parking functions for $\chain_{n}$). For the lower bound one has to compute the number of parking functions with $m$ drivers for $\str_{n}$: there are only two possible cases, namely either $s$ is injective or exactly two drivers have the same non-root node as preferred parking space, whereas all remaining drivers have different non-root nodes as preferred parking spaces. Elementary combinatorics yields the stated result.
\end{proof}

\section{Total number of parking functions: number of drivers coincides with number of parking spaces\label{sec:drivers_equal_nodes}}

In this section, we  consider  the total number of parking functions for trees and mappings for the case that the number of drivers $m$ is equal to the number of parking spaces (i.e., nodes) $n$. As for ordinary parking functions, this case is not only interesting in its own.
It will also occur during the studies of the general case via initial values for recurrence relations.

\subsection{Tree parking functions\label{ssec:Tree_parking_functions_m=n}}

We study the total number $F_{n} \colonequals F_{n,n}$ of $(n,n)$-tree parking functions, i.e., the number of pairs $(T,s)$, with $T \in \mathcal{T}_{n}$ a Cayley tree of size $n$ and $s \in [n]^{n}$ a parking sequence of length $n$ for the tree $T$, such that all drivers are successful.
To obtain a recursive description of the total number $F_{n}$ of tree parking functions we use the decomposition of a Cayley tree $T \in \mathcal{T}_{n}$ w.r.t.\ the last empty node.
We thus consider the situation just before the last driver starts searching a parking space. 

\begin{figure}
%\hspace{0.5cm}
\begin{minipage}[hbt]{0.47\textwidth}
\includegraphics[scale=0.58]{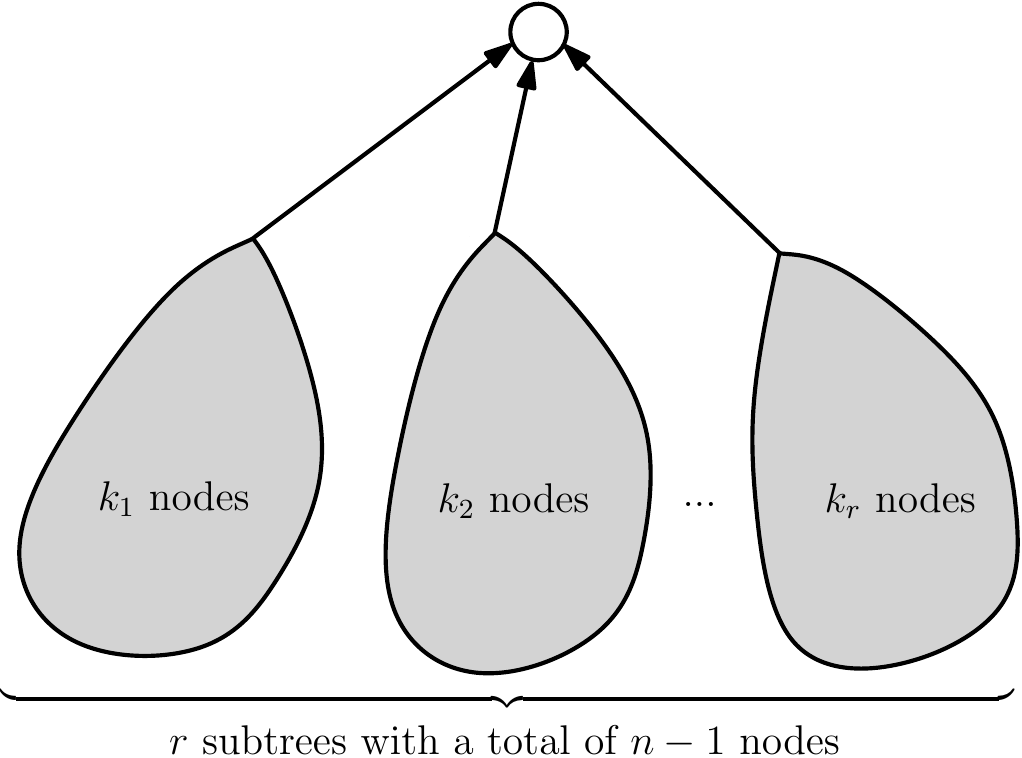}
\end{minipage}
\hspace{-0.1cm}
\begin{minipage}[hbt]{0.47\textwidth}
\includegraphics[scale=0.58]{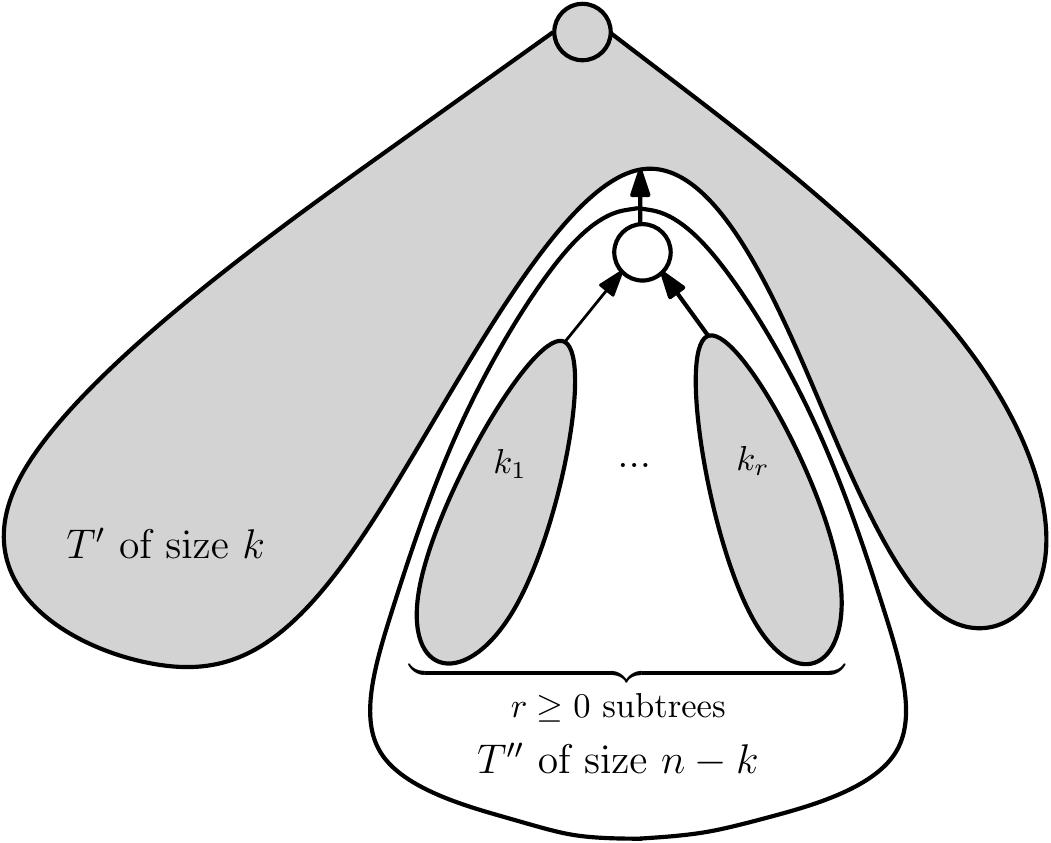}
\end{minipage}
\caption{Schematic representation of the two situations that might occur when considering parking functions with $n$ drivers for a Cayley tree with $n$ nodes. The last empty node, which is marked in white in the trees, is either the root node of the tree (see left hand side) or a non-root node (see right hand side).}
\label{fig:tree_parking_n=m}
\end{figure}

Two different situations might occur: $(i)$ the empty node is the root node of the tree $T$, or $(ii)$ the empty node is a non-root node.
See Figure~\ref{fig:tree_parking_n=m} for a schematic representation of these two situations, where case $(i)$ is depicted to the left and case $(ii)$ to the right.

In case $(i)$ the last driver will always find a free parking space regardless of the $n$ possible choices of his preferred parking space. In case $(ii)$ the last driver will only find a free parking space, if his preferred parking space is contained in the subtree (call it $T''$) rooted at the node corresponding to the free parking space. If we detach the edge linking this subtree $T''$ with the rest of the tree we get two unordered trees; let us assume the tree containing the original root of the tree (denote it with $T'$) has size $k$, whereas the remaining tree $T''$ has size $n-k$. Then there are $n-k$ choices for the preferred parking space of the last driver such that he is successful. Furthermore, it is important to take into account that, given $T'$ and $T''$, the original tree $T$ cannot be reconstructed, since there are always $k$ different trees in $\mathcal{T}_{n}$ leading to the same pair $(T',T'')$; in other words, given $T'$ and $T''$, we have $k$ choices of constructing trees $\tilde{T} \in \mathcal{T}_{n}$ by attaching the root of $T''$ to any of the $k$ nodes of $T'$. Taking into account the order-preserving relabellings of the subtrees and also the merging of the parking sequences for the subtrees, we obtain the following recursive description of $F_{n}$. Here $r$ denotes the number of subtrees of the free parking space (i.e., of the empty node), thus the factor $\frac{1}{r!}$ occurs, since each of the $r!$ orderings of the subtrees of the empty node represent the same tree.
\begin{align}
  F_{n} & = \sum_{r \ge 1} \frac{1}{r!} \sum_{\stackrel{\sum_{i=1}^r k_{i}=n-1}{k_{i} \ge 1}} F_{k_{1}} \cdot F_{k_{2}} \cdot \cdots \cdot F_{k_{r}} \binom{n}{k_{1}, k_{2}, \dots, k_{r}, 1}
  \binom{n-1}{k_{1}, k_{2}, \dots, k_{r}} n\notag\\
  & \quad \mbox{} + \sum_{r \ge 0} \frac{1}{r!} \sum_{\stackrel{k+ \sum_{i=1}^r k_{i}=n-1}{k \ge 1, k_{i} \ge 1}} F_{k} \cdot F_{k_{1}} \cdot F_{k_{2}} \cdot \cdots \cdot F_{k_{r}} \cdot \label{eqn:Fn_rec}\\
  & \qquad \qquad \quad \cdot \binom{n}{k, k_{1}, k_{2}, \dots, k_{r}, 1} \binom{n-1}{k, k_{1}, k_{2}, \dots, k_{r}} k (n-k), \quad \text{for $n \ge 2$},\notag
\end{align}
with initial value $F_{1} = 1$.
In order to treat this recurrence we introduce the following generating function
\begin{equation*}
  F(z) \colonequals \sum_{n \ge 1} F_{n} \frac{z^{n}}{(n!)^{2}}.
\end{equation*}
Then, after straightforward computations which are omitted here, \eqref{eqn:Fn_rec} can be transferred into the following differential equation:
\begin{equation}\label{eqn:Fz_DEQ}
  F'(z) = \exp(F(z)) \cdot \left(1+z F'(z)\right)^{2}, \quad F(0)=0.
\end{equation}
This differential equation can be solved by standard methods and it can be checked easily that the solution of \eqref{eqn:Fz_DEQ} is given as follows:
\begin{equation}\label{eqn:Fz_sol}
  F(z) = T(2z) + \ln\left(1-\frac{T(2z)}{2}\right).
\end{equation}
Here and in the following, $T(z) \colonequals \sum_{n \ge 1} T_{n} \frac{z^{n}}{n!} = \sum_{n \ge 1} n^{n-1} \frac{z^{n}}{n!}$
denotes the so-called tree function, i.e., the exponential generating function of the number $T_{n} = n^{n-1}$ of size-$n$ Cayley trees. In this context we note that the tree function $T(z)$ satisfies the functional equation
\begin{equation}\label{eqn:Tz_feq}
  T(z) = z e^{T(z)}
\end{equation}
and is thus related to the so-called Lambert $W$-function~\cite{flajolet2009analytic}.

We shall not extract coefficients from \eqref{eqn:Fz_sol} at this point yet, since we will soon see in Theorem~\ref{thm:link_trees_mappings_m=n} that the total number $F_{n}$ of parking functions for trees of size $n$ is directly linked to the total number $M_{n}$ of parking functions for mappings of size $n$. The latter quantity is treated in the next section and thus also yields exact and asymptotic enumeration formul{\ae} for $F_{n}$.

\subsection{Mapping parking functions\label{ssec:Mapping_parking_functions_m=n}}

Now we study the total number $M_{n} \colonequals M_{n,n}$ of $(n,n)$-mapping parking functions, i.e., the number of pairs $(f,s)$, with $f \in \mathcal{M}_{n}$ an $n$-mapping and $s \in [n]^{n}$ a parking sequence of length $n$ for the mapping $f$, such that all drivers are successful.
First, consider the well-known structure of the functional digraph $G_{f}$ of a mapping: the connected components of such a graph are cycles of Cayley trees, i.e., the root nodes of the involved Cayley trees are linked in a cyclic way.
For an example, see Figure~\ref{fig:mappingpark}: this graph consists of two connected components, which are cycles of four and two Cayley trees, respectively. It is thus natural to introduce connected mapping graphs (we simply say connected mappings) as auxiliary objects and study parking functions for them; after that the general situation can be treated easily. Let $\mathcal{C}$ and $\mathcal{C}_{n} \colonequals \{f \in \mathcal{C} : |f|=n\}$ denote connected mappings and connected $n$-mappings, respectively. Using the \textsc{Set} and \textsc{Cycle} construction for combinatorial families, mappings, connected mappings and Cayley trees are related via the symbolic equations
\begin{equation*}
  \mathcal{M} = \textsc{Set}(\mathcal{C}), \qquad \mathcal{C} = \textsc{Cycle}(\mathcal{T}).
\end{equation*}
Whereas the relation between mappings and connected mappings can be translated immediately into connections between parking functions for these objects, this is not the case for connected mappings and trees.
Indeed,the decomposition of connected mappings $\mathcal{C}$ into Cayley trees $\mathcal{T}$ is not consistent with the parking procedure. Instead of using this composition, we will therefore apply a decomposition of connected mappings w.r.t.\ the last empty node in the parking procedure. 
So, let us introduce the total number $C_{n}$ of parking functions of length $n$ for connected $n$-mappings, i.e., the number of pairs $(f,s)$, with $f \in \mathcal{C}_{n}$ a connected $n$-mapping and $s \in [n]^{n}$ a parking sequence of length $n$ for $f$, such that all drivers are successful.
We will then obtain a recursive description of $C_{n}$  in which the quantity $F_{n}$ counting the number of $(n,n)$-tree parking functions which was introduced in Section~\ref{ssec:Tree_parking_functions_m=n} appears. 

\begin{figure}[tb]
\begin{minipage}{0.32\textwidth}
\raisebox{2.2cm}{
\includegraphics[scale=0.58]{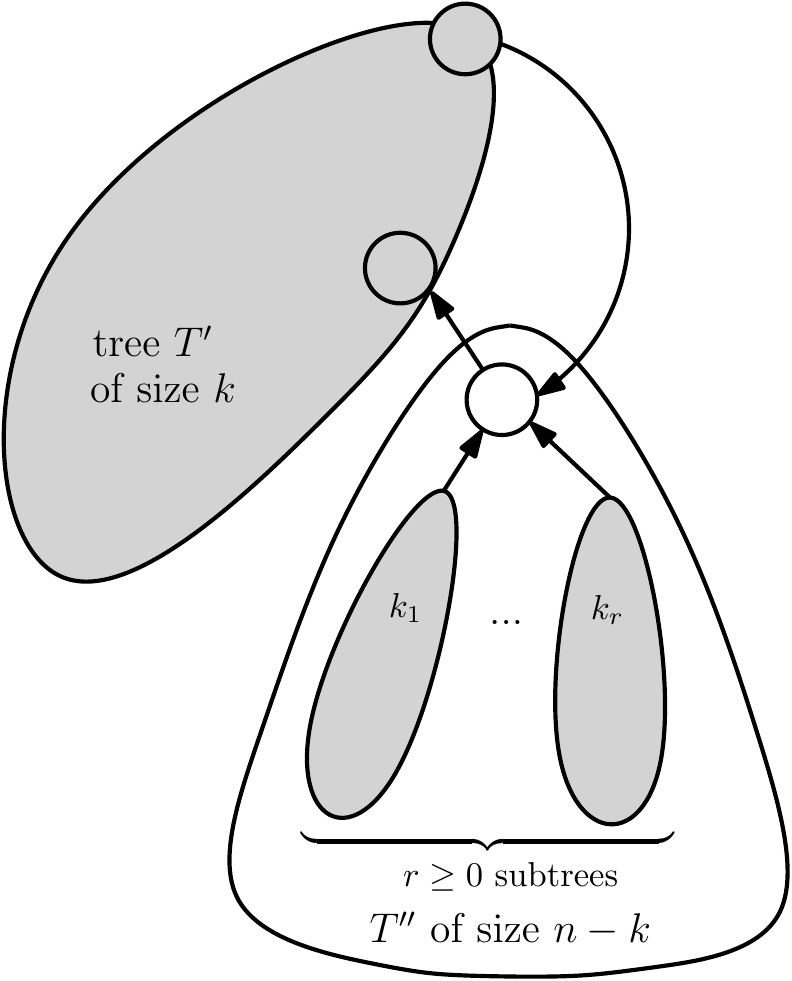}}
\end{minipage}
\hspace{0.1cm}
\begin{minipage}{0.62\textwidth}
\includegraphics[scale=0.58]{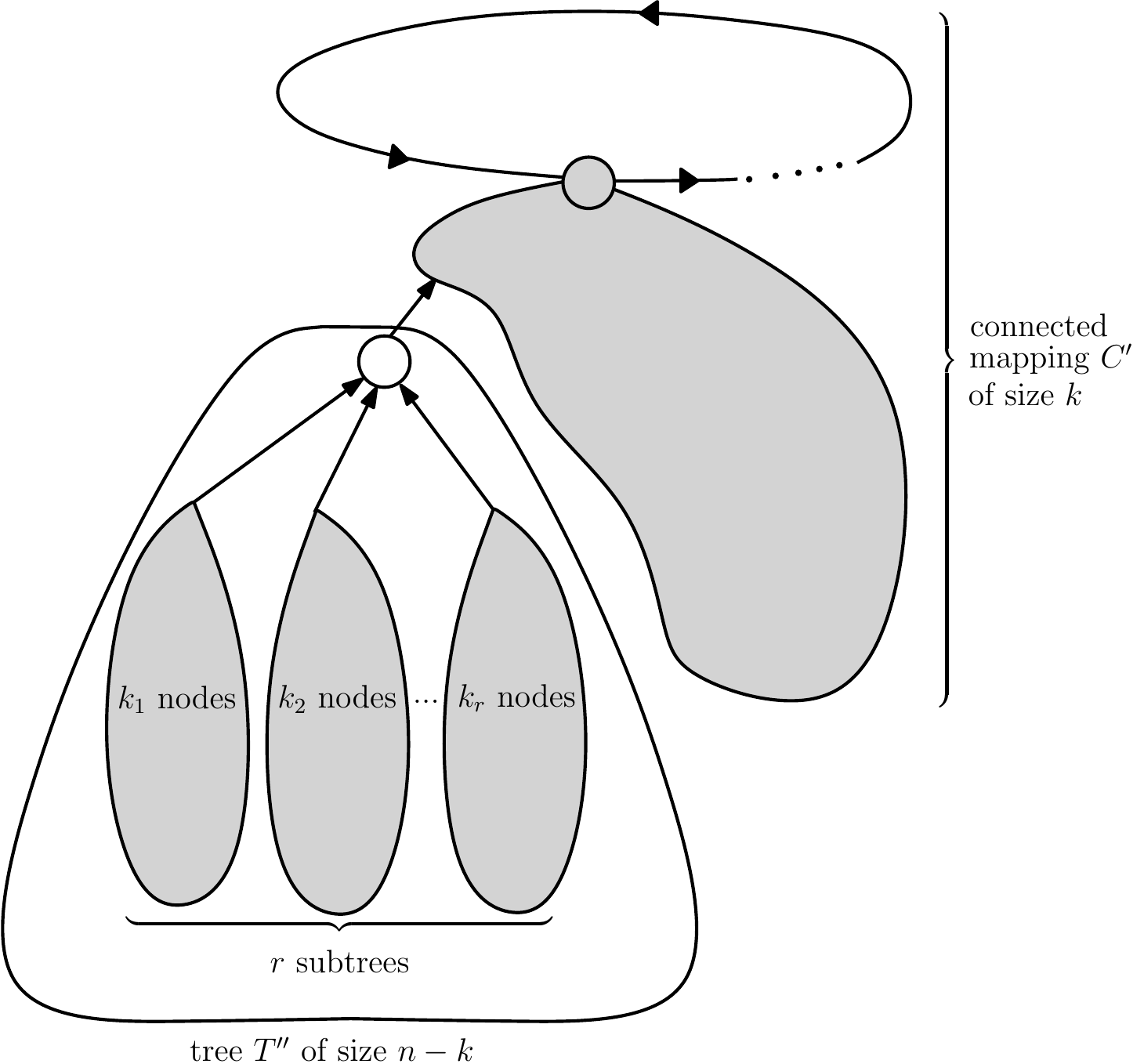}
\end{minipage}
\caption{Schematic representation of two of the three situations that might occur when considering parking functions with $n$ drivers for a connected $n$-mapping. The last empty node is either a non-cyclic node (see right hand side), a node lying on the cycle which has at least length two (see left hand side), or the root node of the single tree constituting the mapping (see the left hand side of Figure~\ref{fig:tree_parking_n=m}).}
\label{fig:mapping_parking_n=m}
\end{figure}

Three situations may occur: $(i)$ the last empty node is the root node of the Cayley tree that forms a length-$1$ cycle, 
$(ii)$ the last empty node is the root node of a Cayley tree lying in a cycle of at least two trees, $(iii)$ the last empty node is not a cyclic node, i.e., it is not one of the root nodes of the Cayley trees forming the cycle.

A schematic representation of these situations can be found in Figure~\ref{fig:mapping_parking_n=m}, where case $(ii)$ is represented on the left hand side and case $(iii)$ on the right hand side.
Case $(i)$ is the same as case $(i)$ for parking functions for trees and has been depicted on the left hand side of Figure~\ref{fig:tree_parking_n=m}.

To treat these cases only slight adaptions to the considerations made in Section~\ref{ssec:Tree_parking_functions_m=n} have to be done; case $(i)$ is explained already there.
In case $(ii)$ the last driver will always find a free parking space regardless of the $n$ possible choices of his preferred parking space.
Let us denote by $T''$ the tree whose root node is the last free parking space. When we detach the two edges linking $T''$ with the rest of the mapping graph, we cut the cycle and the graph decomposes into two trees: the tree $T''$ and the unordered tree (call it $T'$), which we may consider rooted at the former predecessor of the free parking space in the cycle of the original graph. Let us assume that $T'$ has size $k$, whereas $T''$ has size $n-k$. Then, given $T'$ and $T''$, there are $k$ different choices of constructing graphs in $\mathcal{C}_{n}$ by adding an edge from the root of $T'$ to the root of $T''$ and attaching the root of $T''$ to any of the $k$ nodes of $T'$.

In case $(iii)$ the last driver will only find a free parking space if his preferred parking space is contained in the subtree (call it $T''$) rooted at the node corresponding to the free parking space. If we detach the edge linking this subtree $T''$ with the rest of the graph, a connected mapping graph remains (call it $C'$).
Let us assume that $C'$ has size $k$, whereas the tree $T''$ has size $n-k$. Then there are $n-k$ possibilities of preferred parking spaces for the last driver such that he is successful. Again, given $C'$ and $T''$, there are $k$ different choices of constructing graphs in $\mathcal{C}_{n}$ by attaching the root of $T''$ to any of the $k$ nodes of $C'$.

Again, taking into account the order-preserving relabellings of the substructures and also the merging of the parking sequences for them, we obtain the following recursive description of $C_{n}$, valid for all $n \ge 1$:
\begin{align}
  C_{n} & = \sum_{r \ge 0} \frac{1}{r!} \sum_{\sum k_i=n-1} \hspace{-0.5cm}F_{k_{1}} F_{k_{2}} \cdots F_{k_{r}} \binom{n}{k_{1}, k_{2}, \dots, k_{r}}  \binom{n-1}{k_{1}, k_{2}, \dots, k_{r}}
  \cdot n\label{eqn:Cn_rec}\\
  & \mbox{} + \sum_{r \ge 0} \frac{1}{r!} \sum_{k+\sum k_i=n-1} \hspace{-0.5cm} F_{k} F_{k_{1}}  \cdots F_{k_{r}}  \binom{n}{k, k_{1}, \dots, k_{r}} \binom{n-1}{k, k_{1}, \dots, k_{r}}
  \cdot k n\notag\\
  & \mbox{} + \sum_{r \ge 0} \frac{1}{r!} \sum_{k+\sum k_i=n-1} \hspace{-0.5cm} C_{k} F_{k_{1}}  \cdots F_{k_{r}}  \binom{n}{k, k_{1}, \dots, k_{r}}  \binom{n-1}{k, k_{1}, \dots, k_{r}} \cdot k (n-k).\notag
\end{align}
Now we introduce the generating function
\begin{equation*}
  C(z) \colonequals \sum_{n \ge 1} C_{n} \frac{z^{n}}{(n!)^{2}}.
\end{equation*}
Then, recurrence~\eqref{eqn:Cn_rec} yields the following differential equation for $C(z)$,
\begin{multline}
  C'(z) \cdot \left(1-z \exp(F(z)) - z^{2} F'(z) \exp(F(z))\right)\\
	= \left(\left(1+z F'(z)\right)^{2} + z F'(z) + z^{2} F''(z)\right) \exp(F(z)), \quad C(0)=0,
\end{multline}
where $F(z)$ denotes the generating function of the number of tree parking functions given in \eqref{eqn:Fz_sol}.
This differential equation has the following simple solution:
\begin{equation}\label{eqn:Cz_sol}
  C(z) = \ln\left(\frac{1}{1-\frac{T(2z)}{2}}\right),
\end{equation}
as can be checked easily by using the functional equation \eqref{eqn:Tz_feq} of the tree function $T(z)$.

Extracting coefficients from \eqref{eqn:Cz_sol} gives the following auxiliary result.
\begin{lemma}
  The total number $C_{n}$ of parking functions of length $n$ for connected $n$-mappings is, for $n \ge 1$, given as follows:
  \begin{equation*}
    C_{n} = n! (n-1)! \sum_{j=0}^{n-1} \frac{(2n)^{j}}{j!}.
  \end{equation*}
\label{thm:n=m_connected}
\end{lemma}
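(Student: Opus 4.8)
The plan is to extract the coefficients $C_n = (n!)^2 [z^n] C(z)$ directly from the closed form \eqref{eqn:Cz_sol} by means of the Lagrange inversion formula, exploiting the functional equation \eqref{eqn:Tz_feq} of the tree function. First I would introduce the rescaled variable $u \colonequals \frac{T(2z)}{2}$, so that \eqref{eqn:Cz_sol} becomes simply $C(z) = -\ln(1-u)$. The key preliminary observation is that $u$ itself satisfies a clean functional equation: since $T(2z) = 2z\, e^{T(2z)}$ by \eqref{eqn:Tz_feq}, substituting $T(2z) = 2u$ gives $2u = 2z\, e^{2u}$, hence
\begin{equation*}
  u = z\, e^{2u}.
\end{equation*}
This is precisely the form $u = z\,\phi(u)$ with $\phi(u) = e^{2u}$ and $\phi(0) = 1 \neq 0$ required for Lagrange inversion.

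Next I would apply the Lagrange--B\"urmann formula in the shape $[z^n] H(u(z)) = \frac{1}{n}[u^{n-1}]\left(H'(u)\,\phi(u)^n\right)$, valid for $n \ge 1$. Taking $H(u) = -\ln(1-u)$, so that $H'(u) = \frac{1}{1-u}$, and $\phi(u)^n = e^{2nu}$, this yields
\begin{equation*}
  [z^n] C(z) = \frac{1}{n}\,[u^{n-1}]\,\frac{e^{2nu}}{1-u}.
\end{equation*}
The remaining coefficient extraction is elementary: writing $\frac{e^{2nu}}{1-u}$ as the Cauchy product of $\sum_{i \ge 0}\frac{(2n)^i}{i!}u^i$ with the geometric series $\sum_{j \ge 0} u^j$, the coefficient of $u^{n-1}$ is exactly the partial sum $\sum_{i=0}^{n-1}\frac{(2n)^i}{i!}$.

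Finally I would recover $C_n$ by multiplying by $(n!)^2$, obtaining
\begin{equation*}
  C_n = (n!)^2\,[z^n] C(z) = \frac{(n!)^2}{n}\sum_{i=0}^{n-1}\frac{(2n)^i}{i!} = n!\,(n-1)!\sum_{i=0}^{n-1}\frac{(2n)^i}{i!},
\end{equation*}
which is the claimed formula. The proof is essentially a routine computation once the closed form \eqref{eqn:Cz_sol} is in hand; I do not expect a genuine obstacle, only bookkeeping care. The one point deserving attention is getting the rescaling and the functional equation for $u$ exactly right (the factors of $2$ in $u = z\,e^{2u}$ are what produce the $(2n)^i$ in the answer), together with using the correct form $H'(u)\phi(u)^n$ of Lagrange inversion rather than the bare $\phi(u)^n$ version.
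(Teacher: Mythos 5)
Your proof is correct and takes essentially the same route as the paper's: both extract $[z^{n}]$ from the closed form \eqref{eqn:Cz_sol} via Lagrange inversion based on the functional equation \eqref{eqn:Tz_feq}. The only difference is cosmetic bookkeeping --- the paper inverts with respect to $T$ itself (first pulling out a factor $2^{n}$ via $[z^n]G(2z)=2^n[z^n]G(z)$ and then reindexing the resulting sum), whereas your substitution $u = \frac{T(2z)}{2}$ with $u = z e^{2u}$ yields the terms $\frac{(2n)^{j}}{j!}$ directly.
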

\begin{proof}
  Using \eqref{eqn:Tz_feq}, a standard application of the Lagrange inversion formula yields
	\begin{align*}
	  [z^{n}] \ln\left(\frac{1}{1-\frac{T(2z)}{2}}\right) & = 2^{n} [z^{n}] \ln\left(\frac{1}{1-\frac{T(z)}{2}}\right)
		= \frac{2^{n}}{n} [T^{n-1}] \frac{e^{n T}}{2(1-\frac{T}{2})}\\
		& = \frac{2^{n-1}}{n} \sum_{j=0}^{n-1} \frac{n^{n-1-j}}{2^{j} (n-1-j)!}
		= \frac{1}{n} \sum_{j=0}^{n-1} \frac{(2n)^{j}}{j!},
	\end{align*}
	and further
	\begin{equation*}
	  C_{n} = (n!)^{2} [z^{n}] C(z) = (n!)^{2} [z^{n}] \ln\left(\frac{1}{1-\frac{T(2z)}{2}}\right)
		= n! (n-1)! \sum_{j=0}^{n-1} \frac{(2n)^{j}}{j!}.
	\end{equation*}
\end{proof}

Now we are in the position to study the total number $M_{n}$ of $(n,n)$-mapping parking functions.
Again we introduce the generating function
\begin{equation*}
  M(z) \colonequals \sum_{n \ge 0} M_{n} \frac{z^{n}}{(n!)^{2}}.
\end{equation*}
Since the functional digraph of a mapping can be considered as the set of its connected components and furthermore a parking function for a mapping can be considered as a shuffle of the corresponding parking functions for the connected components, we get the following simple relation between the generating functions $M(z)$ and $C(z)$ of parking functions for mappings and connected mappings, respectively:
\begin{equation*}
  M(z) = \exp(C(z)).
\end{equation*}
Thus, by using~\eqref{eqn:Cz_sol}, the generating function $M(z)$ is given as follows:
\begin{equation}\label{eqn:Mz_sol}
  M(z) = \frac{1}{1-\frac{T(2z)}{2}}.
\end{equation}

Next, we remark that the following relation between $M(z)$ and $F(z)$, the generating functions for the number of parking functions for mappings and trees, holds:
\begin{equation*}
1+z F'(z) = 1+\frac{T(2z)}{1-T(2z)}\cdot \left( 1-\frac{1}{2-T(2z)}\right) = 1+\frac{\frac{T(2z)}{2}}{1-\frac{T(2z)}{2}} =
M(z),
\end{equation*}
where we used $T'(z)=T(z)/(z\cdot (1-T(z)))$ obtained by differentiating \eqref{eqn:Tz_feq}.
At the level of coefficients, this immediately shows the following somewhat surprising connection between $F_n$ and $M_n$.

\begin{theorem}\label{thm:link_trees_mappings_m=n}
For all $n \geq 1$ it holds that the total numbers $F_{n}$ and $M_{n}$ of $(n,n)$-tree parking functions and $(n,n)$-mapping parking functions, respectively, satisfy:
\begin{equation*}
  M_{n} = n \cdot F_{n}.
\end{equation*}
\end{theorem}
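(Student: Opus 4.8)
The plan is to read off the claimed identity at the level of coefficients from the generating-function relation $1 + z F'(z) = M(z)$ established in the display immediately preceding the statement. Since both series carry the doubly-factorial normalization $F(z) = \sum_{n \ge 1} F_{n} z^{n}/(n!)^{2}$ and $M(z) = \sum_{n \ge 0} M_{n} z^{n}/(n!)^{2}$, the entire argument reduces to understanding how the operator $z\,\frac{d}{dz}$ acts on such a normalized series, together with a careful matching of coefficients.

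First I would compute
\begin{equation*}
  z F'(z) = \sum_{n \ge 1} F_{n} \frac{n z^{n}}{(n!)^{2}} = \sum_{n \ge 1} (n F_{n}) \frac{z^{n}}{(n!)^{2}},
\end{equation*}
so that applying $z\,\frac{d}{dz}$ precisely multiplies the $n$-th normalized coefficient by $n$. Adding the constant $1$ then gives $1 + z F'(z) = 1 + \sum_{n \ge 1} (n F_{n}) z^{n}/(n!)^{2}$. Second, I would compare this against $M(z) = M_{0} + \sum_{n \ge 1} M_{n} z^{n}/(n!)^{2}$. The constant terms agree, since the left-hand side has constant term $1$ and $M_{0} = 1$ — combinatorially, the empty mapping admits exactly the empty parking sequence, and analytically $M(0) = 1/(1 - T(0)/2) = 1$ from \eqref{eqn:Mz_sol}. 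Matching the coefficient of $z^{n}$ for every $n \ge 1$ yields $n F_{n}/(n!)^{2} = M_{n}/(n!)^{2}$, whence $M_{n} = n F_{n}$, as asserted.

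I do not expect a genuine obstacle at this stage: the substantive work — solving the differential equation \eqref{eqn:Fz_DEQ} to obtain the closed form \eqref{eqn:Fz_sol}, deriving \eqref{eqn:Mz_sol}, and verifying $1 + z F'(z) = M(z)$ by means of the functional equation \eqref{eqn:Tz_feq} — has already been carried out. The only point requiring care is the bookkeeping with the $(n!)^{2}$ normalization and the index shift inherent in differentiation, which is what makes the factor $n$ (rather than, say, $1/n$) appear. As an independent check one could instead substitute the closed forms directly: differentiating $F(z) = T(2z) + \ln(1 - T(2z)/2)$ and using $T'(z) = T(z)/(z(1 - T(z)))$ reproduces $1 + z F'(z) = 1/(1 - T(2z)/2) = M(z)$, after which the same coefficient extraction applies. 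A more conceptual, bijective explanation of this divisibility by $n$ is deferred to the later sections, where it is obtained in the stronger form $M_{n,m} = n F_{n,m}$.
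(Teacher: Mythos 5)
Your proposal is correct and follows exactly the paper's route: the paper likewise verifies $1+zF'(z)=M(z)$ from the closed forms \eqref{eqn:Fz_sol} and \eqref{eqn:Mz_sol} via $T'(z)=T(z)/(z(1-T(z)))$, and then reads off $M_{n}=nF_{n}$ by comparing coefficients under the $(n!)^{2}$ normalization, just as you do. Your explicit bookkeeping of the action of $z\,\frac{d}{dz}$ and the constant-term check $M_{0}=1$ simply spells out what the paper compresses into ``at the level of coefficients, this immediately shows'' the claim.
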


Since it also holds that the number of mappings of size $n$ is exactly $n$ times the number of Cayley trees of size $n$, this implies that the average number of parking functions per mapping of a given size is exactly equal to the average number of parking functions per tree of the same size. Later, in Section~\ref{ssec:bijection_m=n} we establish a combinatorial explanation for this interesting fact. 

Extracting coefficients from the generating function solution \eqref{eqn:Mz_sol} of $M(z)$ easily yields exact formul{\ae} for $M_{n}$ and, due to Theorem~\ref{thm:link_trees_mappings_m=n}, also for $F_{n}$.
\begin{theorem}\label{thm:m=n_mappings}
  The total number $M_{n}$ of $(n,n)$-mapping parking functions is for $n \ge 1$ given as follows:
  \begin{equation*}
    M_{n} = n! (n-1)! \cdot \sum_{j=0}^{n-1} \frac{(n-j) \cdot (2n)^{j}}{j!}.
  \end{equation*}
\end{theorem}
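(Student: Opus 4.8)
The plan is to extract $[z^n]M(z)$ from the closed form \eqref{eqn:Mz_sol}, namely $M(z) = \frac{1}{1-\frac{T(2z)}{2}}$, and then recover $M_n = (n!)^2 [z^n] M(z)$. This mirrors almost exactly the computation carried out for $C_n$ in Lemma~\ref{thm:n=m_connected}; the only new feature is that the relevant function of the tree function is now $\frac{1}{1-T/2}$ rather than $\ln\frac{1}{1-T/2}$, which after differentiation produces a \emph{squared} denominator and thereby the extra linear factor $(n-j)$ appearing in the claimed formula.

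First I would strip off the scaling: since $[z^n] g(2z) = 2^n [z^n] g(z)$, we have
\begin{equation*}
  [z^n] \frac{1}{1-\frac{T(2z)}{2}} = 2^n [z^n] \frac{1}{1-\frac{T(z)}{2}}.
\end{equation*}
Next I would apply the Lagrange inversion formula to the inner coefficient, using the functional equation \eqref{eqn:Tz_feq}, i.e. $T(z) = z e^{T(z)}$, which means $T = z\phi(T)$ with $\phi(T) = e^{T}$. For $H(T) = \frac{1}{1-T/2}$ one has $H'(T) = \frac{1}{2(1-T/2)^2}$, so Lagrange inversion gives
\begin{equation*}
  [z^n] \frac{1}{1-\frac{T(z)}{2}} = \frac{1}{n}[T^{n-1}] \, \frac{e^{nT}}{2\left(1-\frac{T}{2}\right)^2}.
\end{equation*}

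The main computational step is then to read off this coefficient. I would expand $\frac{1}{(1-T/2)^2} = \sum_{i \ge 0} (i+1)\, 2^{-i} T^{i}$ and $e^{nT} = \sum_{\ell \ge 0} \frac{n^\ell}{\ell!} T^\ell$, match $i + \ell = n-1$ by setting $\ell = j$ and $i = n-1-j$ (so that $i+1 = n-j$), and collect terms. This yields the single sum
\begin{equation*}
  [T^{n-1}] \frac{e^{nT}}{\left(1-\frac{T}{2}\right)^2} = \sum_{j=0}^{n-1} \frac{(n-j)\, n^{j}}{2^{\,n-1-j}\, j!},
\end{equation*}
and after reinstating the factors $\frac{1}{2n}$ from Lagrange inversion and $2^n$ from the scaling, the powers of $2$ combine into $(2n)^j$. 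This is precisely the place where the bookkeeping must be done carefully, since one must track three separate sources of powers of two (the $2^{-i}$ from the expansion, the $\frac12$ from $H'$, and the overall $2^n$ from $T(2z)$); the emergence of $(n-j)$ comes entirely from the $(i+1)$ coming out of the squared denominator.

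Finally I would multiply through by $(n!)^2$, using $\frac{(n!)^2}{n} = n!\,(n-1)!$, to obtain
\begin{equation*}
  M_n = (n!)^2 [z^n] M(z) = n!\,(n-1)! \sum_{j=0}^{n-1} \frac{(n-j)\,(2n)^j}{j!},
\end{equation*}
as claimed. I expect no genuine obstacle here beyond the arithmetic of consolidating the powers of $2$; the structure of the argument is forced once one recognizes that $M(z)$ is an algebraic-logarithmic composition of the tree function, for which Lagrange inversion is the natural tool, exactly as in the proof of Lemma~\ref{thm:n=m_connected}.
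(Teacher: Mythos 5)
Your proposal is correct and coincides with the paper's own proof: both extract $[z^{n}]M(z)$ from \eqref{eqn:Mz_sol} via the scaling $[z^{n}]g(2z)=2^{n}[z^{n}]g(z)$ and Lagrange inversion with $\phi(T)=e^{T}$, $H(T)=\bigl(1-\tfrac{T}{2}\bigr)^{-1}$, and your coefficient extraction from $e^{nT}\bigl(1-\tfrac{T}{2}\bigr)^{-2}$ matches the paper's computation exactly (the paper merely uses the index $k=n-1-j$ before re-indexing). The power-of-two bookkeeping and the factor $(n-j)$ from the squared denominator are handled correctly, so nothing is missing.
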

\begin{coroll}\label{cor:m=n_trees}
  The total number $F_{n}$ of $(n,n)$-tree parking functions is for $n \ge 1$ given as follows:
  \begin{equation*}
    F_{n} = ((n-1)!)^2 \cdot \sum_{j=0}^{n-1} \frac{(n-j) \cdot (2n)^{j}}{j!}.
  \end{equation*}
\end{coroll}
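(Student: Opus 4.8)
The plan is to obtain the formula for $F_{n}$ as an immediate consequence of the two preceding results, namely Theorem~\ref{thm:m=n_mappings}, which gives the exact value of $M_{n}$, and Theorem~\ref{thm:link_trees_mappings_m=n}, which establishes the relation $M_{n} = n \cdot F_{n}$ for all $n \ge 1$. The key observation is that these two facts together pin down $F_{n}$ completely: since $M_{n} = n \cdot F_{n}$, we have $F_{n} = M_{n}/n$, and it only remains to substitute the explicit expression for $M_{n}$ and simplify the prefactor.

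Concretely, I would start from
\begin{equation*}
  M_{n} = n! (n-1)! \cdot \sum_{j=0}^{n-1} \frac{(n-j) \cdot (2n)^{j}}{j!},
\end{equation*}
divide both sides by $n$ using Theorem~\ref{thm:link_trees_mappings_m=n}, and note that the summation does not involve the factor being cancelled, so it is carried over unchanged. The only computation is the elementary simplification of the prefactor: $\frac{n! (n-1)!}{n} = (n-1)! \cdot (n-1)! = \bigl((n-1)!\bigr)^{2}$, where the first equality uses $\frac{n!}{n} = (n-1)!$.

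Putting these together yields
\begin{equation*}
  F_{n} = \frac{M_{n}}{n} = \bigl((n-1)!\bigr)^{2} \cdot \sum_{j=0}^{n-1} \frac{(n-j) \cdot (2n)^{j}}{j!},
\end{equation*}
which is exactly the claimed formula.

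I expect no genuine obstacle here, since both required ingredients are already in hand; the statement is a direct corollary and the proof reduces to the single factorial identity $n!/n = (n-1)!$. The only point worth flagging for the reader is that it would be equally valid, but unnecessary, to re-derive the formula by extracting coefficients directly from the generating function $F(z)$ in~\eqref{eqn:Fz_sol} via Lagrange inversion, mirroring the argument used for $M_{n}$; routing through Theorem~\ref{thm:link_trees_mappings_m=n} simply avoids repeating that computation.
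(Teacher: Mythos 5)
Your proposal is correct and matches the paper's own route exactly: the paper states that the formula for $F_{n}$ follows ``due to Theorem~\ref{thm:link_trees_mappings_m=n}'' from the expression for $M_{n}$ in Theorem~\ref{thm:m=n_mappings}, i.e., $F_{n} = M_{n}/n$ with the prefactor simplification $n!\,(n-1)!/n = \bigl((n-1)!\bigr)^{2}$. Nothing further is needed.
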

\begin{proof}[Proof of Theorem~\ref{thm:m=n_mappings}]
  Again, using \eqref{eqn:Tz_feq} and the Lagrange inversion formula, we obtain
	\begin{align*}
	  [z^{n}] \frac{1}{1-\frac{T(2z)}{2}} & = 2^{n} [z^{n}] \frac{1}{1-\frac{T(z)}{2}}
		= \frac{2^{n}}{n} [T^{n-1}] \frac{e^{n T}}{2 \left(1-\frac{T}{2}\right)^{2}} \\
		&
		= \frac{2^{n-1}}{n} \sum_{k=0}^{n-1} \frac{(k+1) n^{n-1-k}}{2^{k} (n-1-k)!} = \frac{1}{n} \sum_{j=0}^{n-1} \frac{(n-j) (2n)^{j}}{j!},
	\end{align*}
	and thus
	\begin{equation*}
	  M_{n} = (n!)^{2} [z^{n}] M(z) = (n!)^{2} [z^{n}] \frac{1}{1-\frac{T(2z)}{2}} = 
		n! (n-1)! \sum_{j=0}^{n-1} \frac{(n-j) \cdot (2n)^{j}}{j!}.
	\end{equation*}
\end{proof}

The asymptotic behaviour of the numbers $M_{n}$ and $F_{n}$ for $n \to \infty$ could be deduced from these exact formul{\ae}; however, it seems easier to start with the generating function solution \eqref{eqn:Mz_sol} of $M(z)$.
Using the well-known asymptotic expansion of the tree function $T(z)$ in a complex neighbourhood of its unique dominant singularity $\frac{1}{e}$ (see \cite{flajolet2009analytic}),
\begin{equation}
  T(z) = 1- \sqrt{2} \sqrt{1-e z} + \frac{2}{3} (1-ez) + \mathcal{O}((1-ez)^{\frac{3}{2}}),
\end{equation}
one immediately obtains that $M(z)$ inherits a singularity from $T(z)$ at $\rho=\frac{1}{2e}$.
According to \eqref{eqn:Mz_sol}, there might be another singularity at the point $z_0$ where $T(2z_0)=2$.
Due to the functional equation \eqref{eqn:Tz_feq}, this would imply $2=2z_0e^2$, i.e.\ $z_0=1/e^2$. 
It is easy to check that $T(2/e^2)\approx 0.4 \neq 2$. 
Therefore, $M(z)$ has its unique dominant singularity at $\rho=\frac{1}{2e}$. 
Its local expansion in a complex neighbourhood of $\rho$ can easily be obtained as follows:
\begin{align*}
M(z) & = \frac{2}{2-T(2z)} = \frac{2}{1 +\sqrt{2} \sqrt{1-2ez} - \frac{2}{3}(1-2ez) + \mathcal{O}((1-2ez)^{\frac{3}{2}})}\\
& = 2 \Bigg(1-\sqrt{2}\sqrt{1-2ez} + \frac{2}{3}(1-2ez) + \mathcal{O}((1-2ez)^{\frac{3}{2}}) \\
& \qquad \quad + \left(-\sqrt{2} \sqrt{1-2ez} +\frac{2}{3}(1-2ez) + \mathcal{O}((1-2ez)^{\frac{3}{2}})\right)^{2}\Bigg)\\
& = 2 -2 \sqrt{2} \sqrt{1-2ez} + \frac{16}{3} (1-2ez) + \mathcal{O}((1-2ez)^{\frac{3}{2}}).
\end{align*}

A standard application of singularity analysis of generating functions, i.e., transfer lemmata which allow to deduce the asymptotic behaviour of the coefficients from the local behaviour of the generating function around its dominant singularity, shows the following asymptotic equivalent of the numbers $M_{n}$.
We get
\begin{equation*}
[z^{n}] M(z) \sim \frac{\sqrt{2}}{\sqrt{\pi}} \frac{(2e)^{n}}{n^{\frac{3}{2}}}
\end{equation*}
and the following corollary, which follows directly when applying Stirling's approximation formula for the factorials \cite{flajolet2009analytic}.
\begin{coroll}\label{cor:asympM}
  The total number $M_{n}$ of $(n,n)$-mapping parking functions and the total number $F_{n}$ of $(n,n)$-tree parking functions, respectively, are asymptotically, for $n \to \infty$, given as follows:
  \begin{equation*}
    M_{n} \sim \frac{\sqrt{2 \pi} \, 2^{n+1} n^{2n}}{\sqrt{n} \, e^{n}}, \quad \text{and} \quad
		F_{n} \sim \frac{\sqrt{2 \pi} \, 2^{n+1} n^{2n}}{n^{\frac{3}{2}} \, e^{n}}.
  \end{equation*}
\end{coroll}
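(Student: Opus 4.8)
The plan is to obtain both asymptotic equivalents as an immediate consequence of the coefficient estimate
\[
[z^{n}] M(z) \sim \frac{\sqrt{2}}{\sqrt{\pi}} \frac{(2e)^{n}}{n^{3/2}}
\]
derived just above by singularity analysis of the local expansion of $M(z)$ around its dominant singularity $\rho = \frac{1}{2e}$. The only remaining step is to pass from $[z^{n}]M(z)$ to $M_{n}$ itself. Since $M(z) = \sum_{n \ge 0} M_{n} \frac{z^{n}}{(n!)^{2}}$, one has $M_{n} = (n!)^{2} [z^{n}] M(z)$, so it suffices to multiply the coefficient estimate by $(n!)^{2}$ and simplify.

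First I would invoke Stirling's formula $n! \sim \sqrt{2\pi n}\,(n/e)^{n}$, which gives $(n!)^{2} \sim 2\pi n\, \frac{n^{2n}}{e^{2n}}$. Combining this with the coefficient estimate yields
\[
M_{n} \sim 2\pi n \cdot \frac{n^{2n}}{e^{2n}} \cdot \frac{\sqrt{2}}{\sqrt{\pi}} \cdot \frac{2^{n} e^{n}}{n^{3/2}}.
\]
Collecting the powers of $n$ (namely $n^{1} \cdot n^{2n} \cdot n^{-3/2} = n^{2n}/\sqrt{n}$), of $e$ (namely $e^{-2n} \cdot e^{n} = e^{-n}$), and of $2$ (namely $2^{n}$), and simplifying the numerical prefactor via $2\pi \cdot \frac{\sqrt{2}}{\sqrt{\pi}} = 2\sqrt{2\pi}$, I would obtain $M_{n} \sim \frac{\sqrt{2\pi}\, 2^{n+1} n^{2n}}{\sqrt{n}\, e^{n}}$, the stated formula (the factor $2^{n+1}$ arising as the product of the numerical $2$ with the $2^{n}$ from $(2e)^{n}$). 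For $F_{n}$ I would then simply apply Theorem~\ref{thm:link_trees_mappings_m=n}, which gives $F_{n} = M_{n}/n$; dividing the equivalent for $M_{n}$ by $n$ turns $\sqrt{n}$ into $n^{3/2}$ in the denominator and produces $F_{n} \sim \frac{\sqrt{2\pi}\, 2^{n+1} n^{2n}}{n^{3/2}\, e^{n}}$.

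There is no genuine obstacle here, as the substantive analytic work is already complete: the dominant singularity $\rho = \frac{1}{2e}$ has been located, the spurious candidate singularity at $z_{0} = 1/e^{2}$ (where $T(2z_{0}) = 2$) has been excluded by checking $T(2/e^{2}) \neq 2$, the local expansion of $M(z)$ has been computed, and the transfer lemmata of singularity analysis have been applied. The only point demanding care is the bookkeeping of constants when merging Stirling's formula with the singular coefficient asymptotics, in particular confirming that the $2\pi$ contributed by $(n!)^{2}$ combines with the $1/\sqrt{\pi}$ from the coefficient estimate to leave exactly $2\sqrt{2\pi} = \sqrt{2\pi}\cdot 2$, so that the resulting factor matches the $2^{n+1}$ in the stated equivalents.
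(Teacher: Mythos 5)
Your proposal is correct and matches the paper's own argument exactly: the paper derives $[z^{n}]M(z) \sim \frac{\sqrt{2}}{\sqrt{\pi}}\frac{(2e)^{n}}{n^{3/2}}$ by singularity analysis immediately before the corollary and then obtains both equivalents ``directly when applying Stirling's approximation formula for the factorials,'' together with $F_{n} = M_{n}/n$ from Theorem~\ref{thm:link_trees_mappings_m=n}. Your constant bookkeeping ($2\pi \cdot \frac{\sqrt{2}}{\sqrt{\pi}} = 2\sqrt{2\pi}$, yielding the $2^{n+1}$) is accurate, so nothing is missing.
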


\subsection{Bijective relation between parking functions for trees and mappings\label{ssec:bijection_m=n}}

The simple relation between the total number of parking functions of a given size for trees and mappings, respectively, stated in Theorem~\ref{thm:link_trees_mappings_m=n} was proved by algebraic manipulations of the corresponding generating functions.
This does not provide a combinatorial explanation of this fact. 
We thus present a bijective proof of this result in the following.
The bijection $\varphi$ is illustrated in Figure~\ref{fig:bij_tree_mapping_n=m} where an example involving a tree of size $8$ is given.

\begin{figure}[bt]
\begin{center}
\includegraphics[height=4cm]{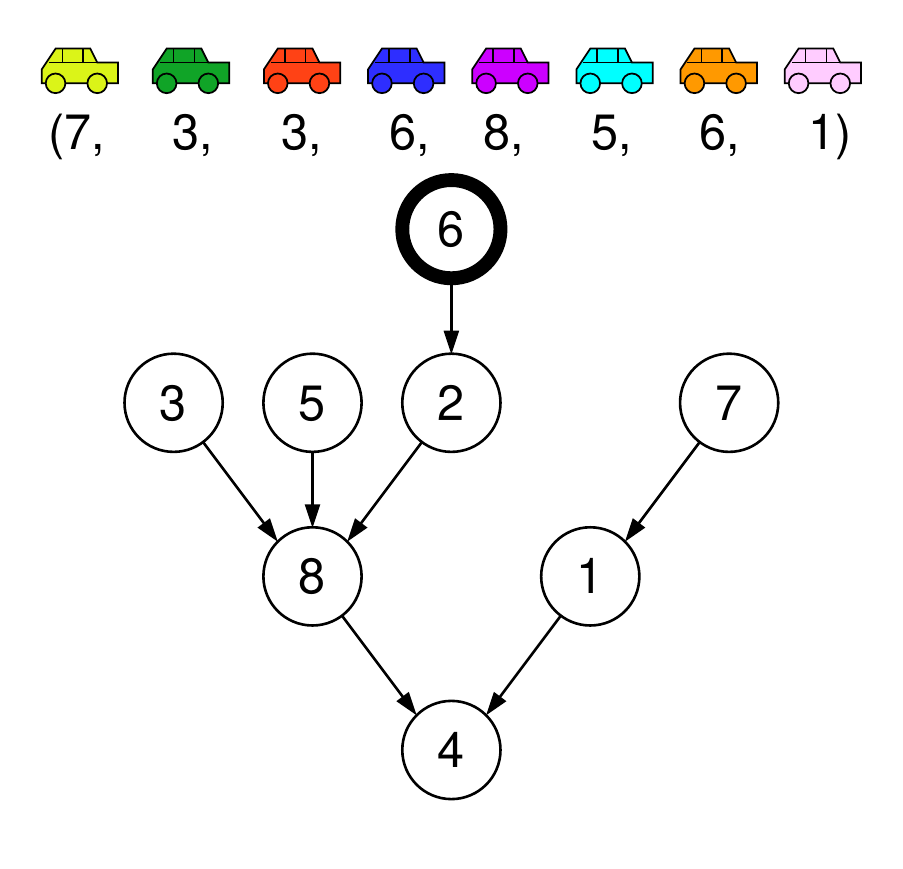}
\raisebox{1.6cm}{$\Rightarrow$}
\includegraphics[height=4cm]{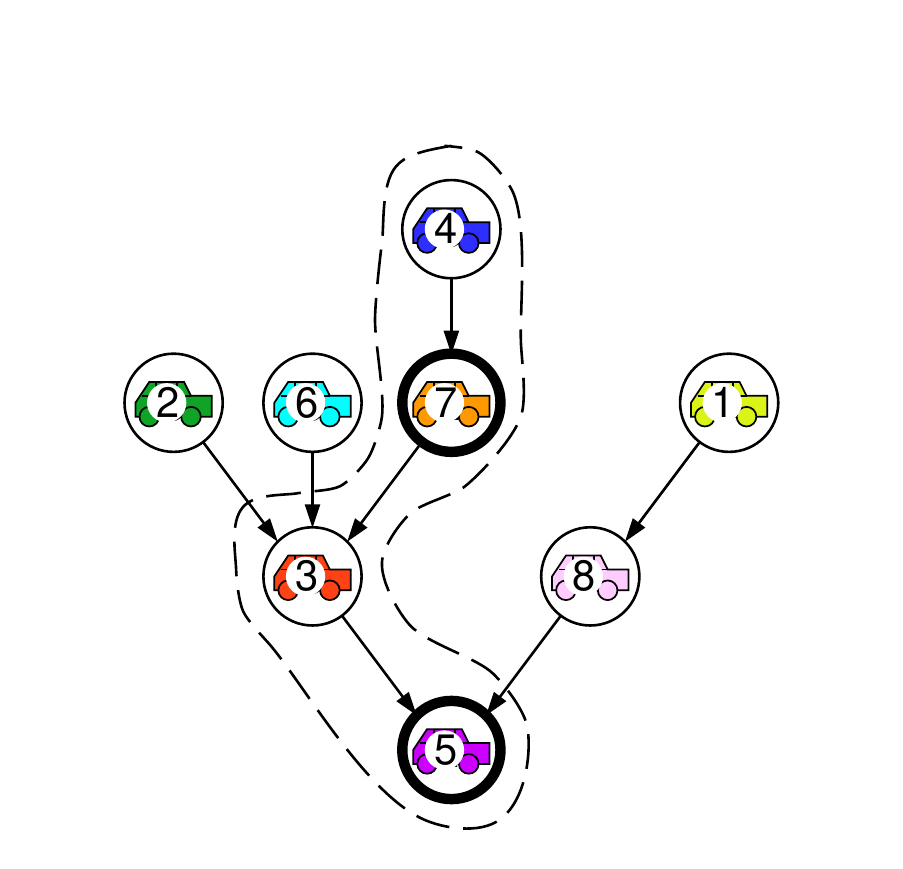}
\raisebox{1.6cm}{$\Rightarrow$}\\
\raisebox{1.6cm}{$\Rightarrow$}
\includegraphics[height=4cm]{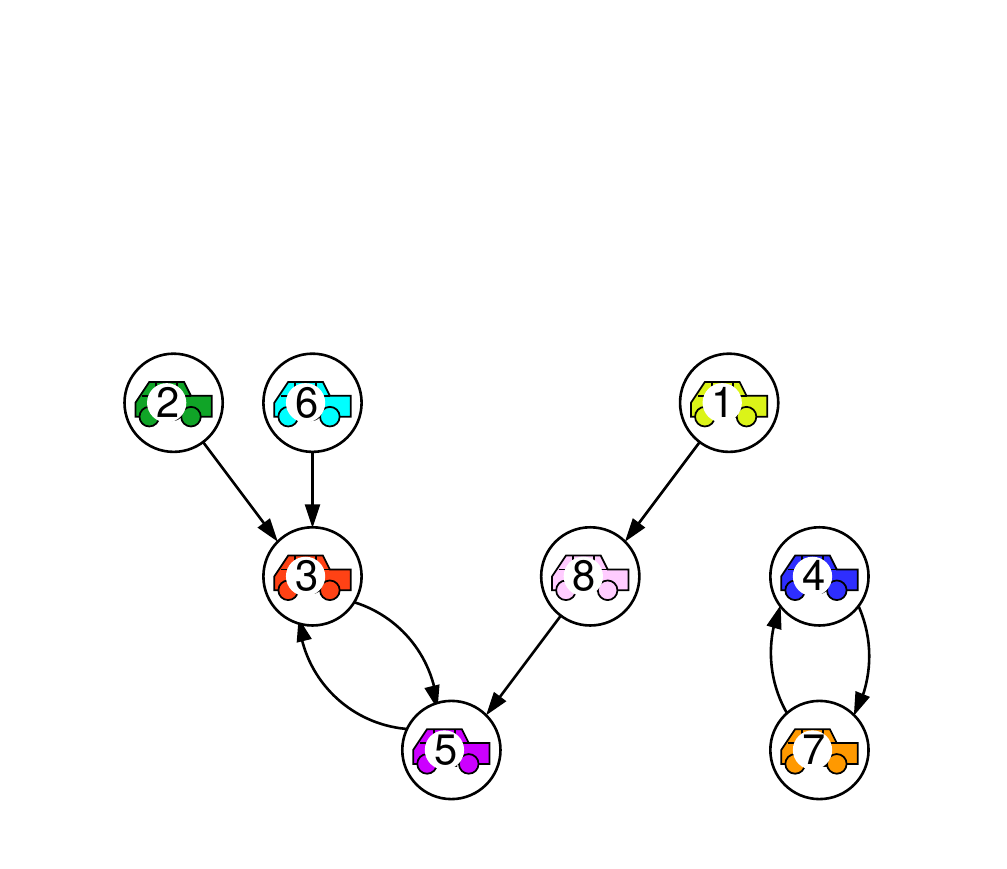}
\raisebox{1.6cm}{$\Rightarrow$}
\includegraphics[height=4cm]{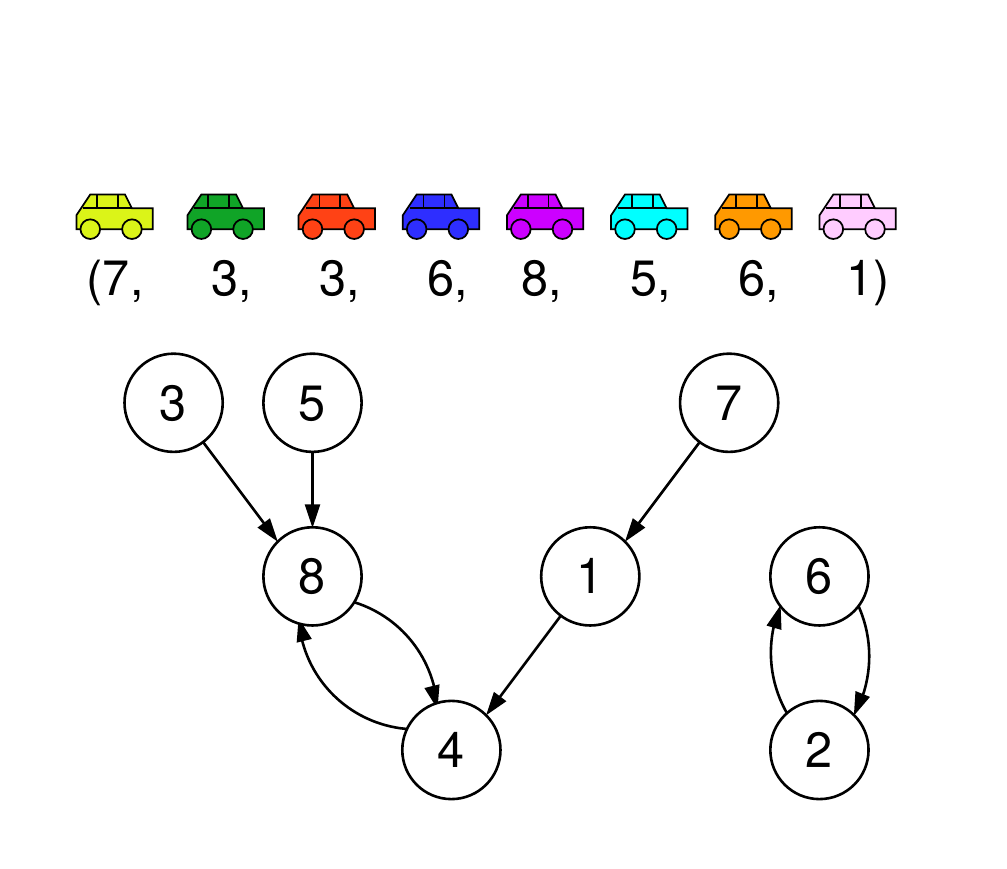}
\end{center}
\caption{The bijection $\varphi$ described in Theorem~\ref{thm:bijection_parking_m=n} is applied to the triple $(T,s,w)$ with $T$ a size-$8$ tree, $s=(7,3,3,6,8,5,6,1)$ a parking function for $T$ with $8$ drivers and the node $w=6$ which is marked in $T$ in the top left corner.
It yields the mapping parking function $(f,s)$ with $f : [8] \to [8]$ an $8$-mapping represented in the bottom right corner. The labels of the cars denote their ranks as defined in the proof of Theorem~\ref{thm:bijection_parking_m=n}. The marked nodes in the second picture correspond to the right-to-left maxima in the sequence of ranks of the drivers on the path from $w$ to the root.\label{fig:bij_tree_mapping_n=m}}
\end{figure}

\begin{theorem}\label{thm:bijection_parking_m=n}
  For each $n \ge 1$, there exists a bijection $\varphi$ from the set of triples $(T,s,w)$, with $T \in \mathcal{T}_{n}$ a tree of size $n$, $s \in [n]^{n}$ a parking function for $T$ with $n$ drivers, and $w \in T$ a node of $T$, to the set of pairs $(f,s)$ where $f \in \mathcal{M}_{n}$ is an $n$-mapping and $s \in [n]^{n}$ is a parking function for $f$ with $n$ drivers. Thus
\begin{equation*}
  n \cdot F_{n} = M_{n}, \quad \text{for $n \ge 1$}.
\end{equation*}
\end{theorem}

\begin{remark}
	The parking function $s$ remains unchanged under the bijection $\varphi$. Thus, when denoting by $\hat{F}_{n}(s)$ and $\hat{M}_{n}(s)$ the number of trees $T \in \mathcal{T}_{n}$ and mappings $f \in \mathcal{M}_{n}$, respectively, such that a given $s \in [n]^{n}$ is a parking function for $T$ and $f$, respectively, it holds:
\begin{equation*}
  n \cdot \hat{F}_{n}(s) = \hat{M}_{n}(s), \quad \text{for $n \ge 1$}.
\end{equation*}
\end{remark}

\begin{proof}[Proof of Theorem~\ref{thm:bijection_parking_m=n}.]
Let us start by defining the rank of a node in $T$: the rank $k(v)$ is defined as $\pi^{-1}(v)$, where the output-function $\pi$ of $(T,s)$  is a bijection since $s$ is a parking function for $T$ with $n$ drivers.
That is, $k(v)=i$ if and only if the $i$-th car in the parking sequence ends up parking at node $v$ in $T$.
For an example, see the second picture in Figure~\ref{fig:bij_tree_mapping_n=m}.
Furthermore, we will denote by $T(v)$ the parent of node $v$ in the tree $T$ in what follows.
That is, for $v \neq \rt(T)$, $T(v)$ is the unique node such that $(v, T(v))$ is an edge in $T$.

Given a triple $(T,s,w)$, we consider the unique path $w \leadsto \rt(T)$ from the node $w$ to the root of $T$.
It consists of the nodes $v_{1}=w$, $v_2=T(v_1), \ldots, v_{i+1}=T(v_i), \ldots, v_{r} = \rt(T)$ for some $r \geq 1$.
To this sequence $v_1, v_2, \ldots, v_r$ of nodes in $T$ we associate its sequence of ranks $k_{1}, \dots, k_{r}$ where $k_i \colonequals k(v_i)$.
We denote by $I = (i_{1}, \dots, i_{t})$, with $i_{1} < i_{2} < \cdots < i_{t}$ for some $t \ge 1$, the indices of the right-to-left maxima in this sequence, i.e.,
\begin{equation*}
  i \in I \Longleftrightarrow k_{i} > k_{j}, \quad \text{for all $j > i$}.
\end{equation*}
The corresponding set of nodes in the path $w \leadsto \rt(T)$ will be denoted by $V_{I} \colonequals \{v_{i} : i \in I\}$. Of course, if follows from the definition that the root node is always contained in $V_{I}$, i.e., $v_{r} \in V_{I}$.

We can now describe the function $\varphi$ by constructing an $n$-mapping $f$, such that $s$ is a parking function for $f$. 
The $t$ right-to-left maxima in the sequence $(k_{1}, \dots, k_{r})$ will give rise to $t$ connected components in the functional digraph $G_{f}$.
Moreover, the nodes on the path $w \leadsto \rt(T)$ in $T$ will correspond to the cyclic nodes in $G_{f}$. We describe $f$ by defining $f(v)$ for all $v \in [n]$, where we distinguish whether $v \in V_{I}$ or not.
\begin{itemize}
\item[$(a)$] Case $v \notin V_{I}$: We set $f(v)=T(v)$.
\item[$(b)$] Case $v \in V_{I}$: We have $v=v_{i_{\ell}}$ for some  $1 \le \ell \le t$.
The crucial observation is that the edge $(v_{i_{\ell}}, T \left( v_{i_{\ell}}\right) )$, is never used by any of the drivers of $s$.
Since $k_{i_{\ell}}$ is a right-to-left maximum in the sequence $k_{1}, \dots, k_{r}$, all nodes that lie on the path from $v_{i_{\ell}}$ to the root are already occupied when the $k_{i_{\ell}}$-th driver parks at $v_{i_{\ell}}$.
Thus, no driver before $k_{i_{\ell}}$ (then he would have parked at $v_{i_{\ell}}$) nor after $k_{i_{\ell}}$ (then he would not be able to park anywhere) could have reached and thus left the node $v_{i_{\ell}}$. 
We may thus delete this edge and attach the node $v_{i_{\ell}}$ to an arbitrary node without violating the property that $s$ is a parking function. 
Since we want to be able to reconstruct $T$ from $f$ we will do this in the following way:
\begin{equation*}
  f(v_{i_{\ell}}) \colonequals T\left( v_{i_{\ell-1}}\right),
\end{equation*}
where we set $T(v_{i_0})=v_1=w$.
This means that the nodes on the path $w \leadsto \rt(T)$ in $T$ form $t$ cycles $C_{1} \colonequals (v_{1}, \dots, v_{i_{1}})$, \dots, $C_{t} \colonequals (T(v_{i_{t-1}}), \dots, v_{r}=v_{i_t})$ in $G_{f}$.
\end{itemize}
Having defined the mapping $f$ in this way, the sequence $s$ is also a parking function for $f$ and it holds that the parking paths of the drivers coincide for $T$ and $f$. 
In particular, it holds that $\pi_{(f,s)} = \pi_{(T,s)}$ for the corresponding output-functions.

Moreover, it is easy to describe the inverse function $\varphi^{-1}$.
Given a pair $(f,s)$, we start by computing the rank of every node in $G_f$.
Then we sort the connected components of $G_f$ in decreasing order of their cyclic elements with highest rank.
That is, if $G_f$ consists of $t$ connected components and $c_i$ denotes the cyclic element in the $i$-th component with highest rank, we have $k(c_1) > k(c_2) > \ldots > k(c_t)$.
Then, for every $1 \leq i \leq t$, we remove the edges $(c_i, d_i)$ where $d_i= f(c_i)$.
Next we reattach the components to each other by establishing the edges $(c_i, d_{i+1})$ for every $1 \leq i \leq t-1$.
This leads to the tree $T$.
Note that the node $c_t$ is attached nowhere since it constitutes the root of $T$.
Setting $w=d_1$, we obtain the preimage $(T,s,w)$ of $(f,s)$.
\end{proof}

\section{Total number of parking functions: the general case\label{sec:general_case}}

In this section we study the exact and asymptotic behaviour of the total number of tree and mapping parking functions for the general case of $n$ parking spaces and $0 \le m \le n$ drivers. In what follows we will always use $\tilde{m} \colonequals n-m$, i.e., $\tilde{m}$ denotes the number of empty parking spaces (i.e., empty nodes) in the tree or mapping graph after all $m$ drivers have parked. The case $\tilde{m}=0$ has already been treated  in Section~\ref{sec:drivers_equal_nodes} and the results obtained there will be required here.

\subsection{Tree parking functions\label{ssec:Tree_parking_functions_general}}

We analyze the total number $F_{n,m}$ of $(n,m)$-tree parking functions, i.e., the number of pairs $(T,s)$, with $T \in \mathcal{T}_{n}$ a Cayley tree of size $n$ and $s \in [n]^{m}$ a parking sequence of length $m$ for the tree $T$, such that all drivers are successful.
Furthermore, as introduced in Section~\ref{sec:drivers_equal_nodes}, $F_{n} = F_{n,n}$ denotes the number of tree parking functions when the number of parking spaces $n$ coincides with the number of drivers $m$.

Let us now consider tree parking functions for the case that $\tilde{m}$ parking spaces will remain free.
In the following it is advantageous to use the abbreviation $\tilde{F}_{n,\tilde{m}} \colonequals F_{n,n-\tilde{m}}$, thus $\tilde{F}_{n,0} = F_{n}$.
Let us assume that $1 \le \tilde{m} \le n$. To get a recursive description for the numbers $\tilde{F}_{n,\tilde{m}}$, we use the combinatorial decomposition of a Cayley tree $T \in \mathcal{T}_{n}$ w.r.t.\ the free node which has the largest label amongst all $\tilde{m}$ empty nodes in the tree. 

Again, the two situations depicted in Figure~\ref{fig:tree_parking_n=m} have to be considered.
The argumentation given in Section~\ref{ssec:Tree_parking_functions_m=n} for the case $\tilde{m}=0$ can be adapted easily: in case $(i)$, the root node is the empty node with largest label and we assume that the $r$ subtrees of the root are of sizes $k_{1}, \dots, k_{r}$ (with $\sum_{i} k_{i} = n-1$) and contain $\ell_{1}, \dots, \ell_{r}$ (with $\sum_{i} \ell_{i} = \tilde{m}-1$) empty nodes, respectively. 
In case $(ii)$, a non-root node is the empty node with largest label.
We denote by $T''$ the subtree of $T$ rooted at this empty node.
After detaching $T''$ from the remaining tree we obtain a tree $T'$ that is of size $k$ and has $\ell$ empty nodes for some $1 \leq k \leq n-1$ and $0 \leq \ell \leq \tilde{m}-1$. Furthermore, we assume that the $r$ subtrees of the root of $T''$ are of sizes $k_{1}, \dots, k_{r}$ (with $k + \sum_{i} k_{i} = n-1$) and contain $\ell_{1}, \dots, \ell_{r}$ (with $\ell + \sum_{i} \ell_{i} = \tilde{m}-1$) empty nodes, respectively. In the latter case one has to take into account that there are $k$ possibilities of attaching the root of $T''$ to one of the $k$ nodes in $T'$ yielding the same decomposition. The following recursive description of the numbers $\tilde{F}_{n,\tilde{m}}$ follows by considering the order-preserving relabellings of the subtrees and also the merging of the parking sequences for the subtrees. Moreover, one uses the simple fact that, when fixing an empty node $v$ and considering all possible labellings of the $\tilde{m}$ empty nodes, only a fraction of $\frac{1}{\tilde{m}}$ of all labellings leads to $v$ having the largest label amongst all empty nodes.

We then get the following recurrence
\begin{align}
  \tilde{F}_{n,\tilde{m}} & = \frac{1}{\tilde{m}} \sum_{r \ge 0} \frac{1}{r!} \sum_{k_{1}+\cdots+k_{r}=n-1} \sum_{\ell_{1}+\cdots+\ell_{r}=\tilde{m}-1}
  \tilde{F}_{k_{1},\ell_{1}} \cdot \tilde{F}_{k_{2},\ell_{2}} \cdots \tilde{F}_{k_{r},\ell_{r}} \cdot\notag\\
  & \qquad \cdot \binom{n}{k_{1}, k_{2}, \dots, k_{r}} \binom{n-\tilde{m}}{k_{1}-\ell_{1}, k_{2}-\ell_{2}, \dots, k_{r}-\ell_{r}}\label{eqn:Fnm_rec}\\
	& \quad \mbox{} + \frac{1}{\tilde{m}} \sum_{r \ge 0} \frac{1}{r!} \sum_{k+k_{1}+\cdots+k_{r}=n-1} \cdot \sum_{\ell+\ell_{1}+\cdots+\ell_{r}=\tilde{m}-1} \tilde{F}_{k,\ell} \tilde{F}_{k_{1},\ell_{1}} \cdots \tilde{F}_{k_{r},\ell_{r}} \cdot\notag\\
  & \qquad \cdot \binom{n}{k, k_{1}, \dots, k_{r}} \binom{n-\tilde{m}}{k-\ell,k_{1}-\ell_{1}, \dots, k_{r}-\ell_{r}} \cdot k,
		\quad \text{for $1 \le \tilde{m} \le n$},\notag
\end{align}
with initial values $\tilde{F}_{n,0} = F_{n}$. It is advantageous to introduce the generating function
\begin{equation}\label{eqn:Fzu_def}
  \tilde{F}(z,u) \colonequals \sum_{n \ge 1} \sum_{\tilde{m} \ge 0} \tilde{F}_{n,\tilde{m}} \frac{z^{n} u^{\tilde{m}}}{n! (n-\tilde{m})!}
	= \sum_{n \ge 1} \sum_{0 \le m \le n} F_{n,n-m} \frac{z^{n} u^{n-m}}{n! m!}.
\end{equation}
The recurrence relation~\eqref{eqn:Fnm_rec} then yields, after straightforward computations, the following partial differential equation
for $\tilde{F}(z,u)$:
\begin{equation}\label{eqn:Fzu_pde}
  \tilde{F}_{u}(z,u) = z^{2} \tilde{F}_{z}(z,u) \exp(\tilde{F}(z,u)) + z \exp(\tilde{F}(z,u)),
\end{equation}
with initial condition $\tilde{F}(z,0) = F(z)$ and $F(z) = \sum_{n \ge 1} F_{n} \frac{z^{n}}{(n!)^{2}}$ given by \eqref{eqn:Fz_sol}.
A suitable representation of the solution of this PDE as given next is crucial for further studies.

\begin{prop}\label{prop:Fzu_sol}
  The generating function $\tilde{F}(z,u)$ defined in \eqref{eqn:Fzu_def} is given by
  \begin{equation*}
    \tilde{F}(z,u) = Q \cdot \left(2+u(1-Q)\right) + \ln\left(1-Q\right)= \ln\left(\frac{Q(1-Q)}{z} \right),
  \end{equation*}
  where the function $Q = Q(z,u)$ is given implicitly as the solution of the functional equation
  \begin{equation}\label{eqn:Qzu_feq}
	  Q = z \cdot e^{Q \cdot \left(2+u(1-Q)\right)}.
  \end{equation}
\end{prop}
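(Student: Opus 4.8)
The plan is to verify that the claimed closed form solves the partial differential equation \eqref{eqn:Fzu_pde} together with the initial condition, rather than to derive it from scratch; since the PDE has a unique formal power series solution, checking a candidate suffices. First I would establish the two equivalent forms of $\tilde{F}$. Starting from the functional equation \eqref{eqn:Qzu_feq}, taking logarithms gives $\ln Q = \ln z + Q(2+u(1-Q))$, i.e.\ $Q(2+u(1-Q)) = \ln(Q/z)$. Substituting this into the first expression yields
\begin{equation*}
  \tilde{F} = \ln(Q/z) + \ln(1-Q) = \ln\!\left(\frac{Q(1-Q)}{z}\right),
\end{equation*}
which is exactly the second stated form. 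Hence the two representations agree, and it remains only to check that this function satisfies \eqref{eqn:Fzu_pde} and the initial condition.

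Next I would compute the partial derivatives $Q_{z}$ and $Q_{u}$ by implicit differentiation of \eqref{eqn:Qzu_feq}. Writing $\Phi \colonequals 2+u(1-Q)$, so that $Q = z e^{Q\Phi}$ and $\partial_{Q}(Q\Phi) = 2+u(1-2Q)$, differentiating with respect to $z$ and to $u$ gives linear relations that solve to
\begin{equation*}
  Q_{z} = \frac{Q}{z\,(1 - Q(2+u(1-2Q)))}, \qquad
  Q_{u} = \frac{Q^{2}(1-Q)}{1 - Q(2+u(1-2Q))}.
\end{equation*}
From the form $\tilde{F} = \ln Q + \ln(1-Q) - \ln z$ one reads off $\tilde{F}_{z} = \tfrac{Q_{z}}{Q} - \tfrac{Q_{z}}{1-Q} - \tfrac1z$ and $\tilde{F}_{u} = \tfrac{Q_{u}}{Q} - \tfrac{Q_{u}}{1-Q}$, each a rational expression in $Q$ after substituting the derivatives above. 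A key simplification is that $\exp(\tilde{F}) = \frac{Q(1-Q)}{z}$ directly from the logarithmic form, which removes all exponentials from \eqref{eqn:Fzu_pde} and turns the PDE into a rational identity in $Q$, $z$, $u$. The remaining task is the algebraic verification that $\tilde{F}_{u} = z^{2}\tilde{F}_{z}\exp(\tilde{F}) + z\exp(\tilde{F})$ reduces, after clearing the common denominator $1 - Q(2+u(1-2Q))$, to a polynomial identity; I expect the factor $(1-Q)$ and the relation $Q = z e^{Q\Phi}$ to make both sides collapse to the same expression.

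For the initial condition I would set $u=0$: then \eqref{eqn:Qzu_feq} becomes $Q = z e^{2Q}$, so $Q(z,0) = \tfrac12 T(2z)$ by the functional equation \eqref{eqn:Tz_feq} of the tree function, and the second form gives $\tilde{F}(z,0) = 2Q + \ln(1-Q) = T(2z) + \ln(1-\tfrac12 T(2z))$, matching \eqref{eqn:Fz_sol} exactly. The main obstacle is the bookkeeping in the algebraic identity of the previous paragraph: the denominators coming from $Q_z$ and $Q_u$ must be handled uniformly and one must repeatedly invoke \eqref{eqn:Qzu_feq} to eliminate the exponential, so the risk is a sign or factor error rather than any conceptual difficulty. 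A cleaner alternative, which I would use as a cross-check, is to avoid solving for $Q_z,Q_u$ explicitly and instead differentiate the implicit relation $\tilde{F} = \ln(Q(1-Q)/z)$ alongside \eqref{eqn:Qzu_feq}, eliminating $Q_z$ and $Q_u$ between the resulting equations to recover \eqref{eqn:Fzu_pde} symmetrically.
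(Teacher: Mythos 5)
Your proposal is correct, but it takes a genuinely different route from the paper: the paper \emph{derives} the solution by solving the quasilinear PDE \eqref{eqn:Fzu_pde} with the method of characteristics --- finding the first integrals $z e^{\tilde{F}}$ and $u - \tilde{F}/(z e^{\tilde{F}})$, fitting the arbitrary function $h$ to the initial condition via the tree function, and only afterwards introducing $Q$ as a substitution that tames the implicit description \eqref{eqn:Fzu_sol_complicated} --- whereas you take the closed form as given and \emph{verify} it. The paper itself remarks at the start of its proof that such a check is possible, so your plan is precisely the road not taken there, and it does go through: your implicit derivatives are correct, and since $1-Q\left(2+u(1-2Q)\right) = (1-2Q)(1-uQ)$ they agree with the paper's \eqref{eqn:Qzu_partderiv}; moreover the rational identity collapses even more cleanly than you anticipate, since with $e^{\tilde{F}} = Q(1-Q)/z$ both sides of \eqref{eqn:Fzu_pde} reduce to $z Q_{z}(1-2Q) = \frac{Q}{1-uQ} = \tilde{F}_{u}$, so no sign or factor trouble materializes. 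Two small points should be made explicit to close your argument: uniqueness of the formal power series solution of \eqref{eqn:Fzu_pde} follows by comparing coefficients of $u^{k}$ (the coefficient of $u^{k}$ on the right-hand side involves only the coefficients of $u^{0},\dots,u^{k}$ of $\tilde{F}$, so the coefficient of $u^{k+1}$ is determined recursively from the initial datum $F(z)$), and the functional equation \eqref{eqn:Qzu_feq} determines a unique formal power series $Q(z,u)$ with $Q(0,u)=0$ (Lagrange inversion), which also justifies $Q(z,0)=T(2z)/2$ from $Q = z e^{2Q}$ and \eqref{eqn:Tz_feq}, exactly as you use it for the initial condition. As for what each approach buys: yours is shorter and essentially mechanical once uniqueness is stated, while the paper's derivation explains where $Q$ and its functional equation come from in the first place, and the same circle of ideas (the derivative formulas \eqref{eqn:Qzu_partderiv}, the relation \eqref{eqn:Fzu_Qzu_rel}, and the substitution $z = z(Q)$) is reused immediately in the proof of Proposition~\ref{prop:Czu_sol}, which a pure verification would not prepare.
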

\begin{proof}
Of course, once a solution is found, it can be checked easily after some computations that this solution indeed satisfies the PDE \eqref{eqn:Fzu_pde} as well as the initial condition $\tilde{F}(z,0) = F(z)$. However, we find it useful to carry out solving this first order quasilinear partial differential equation via the so-called ``method of characteristics''. To start with we assume that we have an implicit description of a solution $\tilde{F} = \tilde{F}(z,u)$ of \eqref{eqn:Fzu_pde} via the equation
\begin{equation*}
  g(z,u,\tilde{F}) = c,
\end{equation*}
with a certain differentiable function $g$ and a constant $c$. Taking derivatives of this equation w.r.t.\ $z$ and $u$ we obtain $g_{z} + g_{\tilde{F}}\tilde{F}_{z}=0$ and $g_{u} + g_{\tilde{F}}\tilde{F}_{u}=0$. After plugging these equations into \eqref{eqn:Fzu_pde} we get the following linear PDE in reduced form for the function $g(z,u,\tilde{F})$:
\begin{equation}\label{eqn:gzF_PDE}
  g_{u} - z^{2} e^{\tilde{F}} g_{z} + z e^{\tilde{F}} g_{\tilde{F}} = 0.
\end{equation}
To solve it we consider the following system of so-called characteristic differential equations,
\begin{equation}\label{eqn:Fzn_characteristicDE}
  \dot{u} = 1, \quad \dot{z} = -z^{2} e^{\tilde{F}}, \quad \dot{\tilde{F}} = z e^{\tilde{F}},
\end{equation}
where we regard $z = z(t)$, $u = u(t)$, and $\tilde{F} = \tilde{F}(t)$ as dependent of a variable $t$, i.e., $\dot{z} = \frac{d z(t)}{dt}$, etc. Now we search for first integrals of the system of characteristic differential equations, i.e., for functions $\xi(z,u,\tilde{F})$, which are constant along any solution curve (a so-called characteristic curve) of \eqref{eqn:Fzn_characteristicDE}.

We may proceed as follows. The second and third equation of \eqref{eqn:Fzn_characteristicDE} yield the differential equation
\begin{equation*}
  \frac{dz}{d\tilde{F}} = -z,
\end{equation*}
leading to the general solution $z = c_{1} e^{-\tilde{F}}$; thus, we get the following first integral of \eqref{eqn:Fzn_characteristicDE}:
\begin{equation*}
  \xi_{1}(z,u,\tilde{F}) = c_{1} = z e^{\tilde{F}}.
\end{equation*}
To get another first integral (independent from this one) we consider the first and third differential equation of \eqref{eqn:Fzn_characteristicDE} and get, after the substitution $z = c_{1} e^{-\tilde{F}}$, simply
\begin{equation*}
  \frac{du}{d\tilde{F}} = \frac{1}{c_{1}}.
\end{equation*}
The general solution $u = \frac{\tilde{F}}{c_{1}} + c_{2}$ yields, after backsubstituting $c_{1} = z e^{\tilde{F}}$ the following first integral:
\begin{equation*}
  \xi_{2}(z,u,\tilde{F}) = c_{2} = u - \frac{\tilde{F}}{z e^{\tilde{F}}}.
\end{equation*}
Thus the general solution of \eqref{eqn:gzF_PDE} is given as follows:
\begin{equation}\label{eqn:gzuF_sol}
  g(z,u,\tilde{F}) = H\left(\xi_{1}(z,u,\tilde{F}), \xi_{2}(z,u,\tilde{F})\right) 
	= H\left(z e^{\tilde{F}}, u-\frac{\tilde{F}}{z e^{\tilde{F}}}\right) = c,
\end{equation}
with $H$ an arbitrary differentiable function in two variables and $c$ a constant. We can solve \eqref{eqn:gzuF_sol} w.r.t.\ the variable $u$ and obtain that
the general solution of the PDE \eqref{eqn:Fzu_pde} is implicitly given by
\begin{equation}\label{eqn:Fzu_general}
  u = \frac{\tilde{F}(z,u)}{z e^{\tilde{F}(z,u)}} + h\left( z e^{\tilde{F}(z,u)}\right) ,
\end{equation}
with $h(x)$ an arbitrary differentiable function in one variable.
It remains to characterize the function $h(x)$ by adapting the general solution \eqref{eqn:Fzu_general} to the initial condition $\tilde{F}(z,0) = F(z)$. First, we obtain
\begin{equation*}
  h(z e^{F(z)}) = - \frac{F(z)}{z e^{F(z)}},
\end{equation*}
with $F(z)$ given by \eqref{eqn:Fz_sol}. To get an explicit description of $h(x)$ we require some manipulations. Using the abbreviations $F=F(z)$, $T=T(2z)$ and introducing $R=R(z) \colonequals z e^{F(z)}$, we get
\begin{equation*}
  R = z e^{F} = z e^{T + \ln \left(1-\frac{T}{2}\right)} = z e^{T} \left( 1-\frac{T}{2}\right)  = \frac{T}{2} \left( 1-\frac{T}{2}\right) ,
\end{equation*}
where we applied \eqref{eqn:Tz_feq} for the last identity. Thus 
\begin{equation*}
  T = 1-\sqrt{1-4R},
\end{equation*}
since $T(0) = R(0) = 0$ determines the correct branch for the solution.
We can characterize the function $h(x)$ via
\begin{equation*}
  h(R) = - \frac{F}{R} = - \frac{T + \ln(1-\frac{T}{2})}{R} = - \frac{1-\sqrt{1-4R} + \ln(1-\frac{1-\sqrt{1-4R}}{2})}{R}.
\end{equation*}
Therefore, plugging this characterization of $h(x)$ into \eqref{eqn:Fzu_general}, the generating function $\tilde{F} = \tilde{F}(z,u)$ is given implicitly as follows:
\begin{equation}\label{eqn:Fzu_sol_complicated}
  uz e^{\tilde{F}} - \tilde{F} + 1-\sqrt{1-4ze^{\tilde{F}}} + \ln\left( 1-\frac{1-\sqrt{1-4ze^{\tilde{F}}}}{2}\right)  = 0.
\end{equation}
To get a more amenable representation we introduce $Q = Q(z,u)$ via
\begin{equation*}
  Q \colonequals \frac{1-\sqrt{1-4ze^{\tilde{F}}}}{2}.
\end{equation*}
First, we get
\begin{equation}\label{eqn:Fzu_Qzu_rel}
  e^{\tilde{F}} = \frac{Q (1-Q)}{z},
\end{equation}
and, after plugging this into \eqref{eqn:Fzu_sol_complicated},
\begin{equation*}
  \tilde{F} = uQ(1-Q) +2Q + \ln(1-Q).
\end{equation*}
Exponentiating the latter equation shows then the functional equation characterizing $Q$,
\begin{equation*}
  Q = z e^{uQ(1-Q) + 2Q},
\end{equation*}
finishing the proof.
\end{proof}

As for the case where the number of drivers coincides with the size of the tree, we do not extract coefficients at this point yet.
We will see in Theorem~\ref{thm:link_trees_mappings_general} that the numbers $F_{n,m}$ are again linked directly to the numbers $M_{n,m}$ counting mapping parking functions and we shall therefore content ourselves with extracting coefficients for the corresponding generating function $\tilde{M}(z,u)$.

\subsection{Mapping parking functions}

We continue our studies on mapping parking functions by considering the total number $M_{n,m}$ of $(n,m)$-mapping parking functions, i.e., the number of pairs $(f,s)$ with $f \in \mathcal{M}_{n}$ an $n$-mapping and $s \in [n]^{m}$ a parking sequence of length $m$ for the mapping $f$, such that all drivers are successful. 

As pointed out already in Section~\ref{ssec:Mapping_parking_functions_m=n}, it suffices to provide the relevant considerations for the subfamily $\mathcal{C}_{n}$ of connected $n$-mappings, since results for the general situation can then be deduced easily. Thus, let us introduce the total number $C_{n,m}$ of parking functions of length $m$ for connected $n$-mappings, i.e., the number of pairs $(f,s)$, with $f \in \mathcal{C}_{n}$ a connected $n$-mapping and $s \in [n]^{m}$ a parking sequence of length $m$ for $f$, such that all drivers are successful. Additionally, we require the numbers $F_{n}$, $C_{n}$ and $F_{n,m}$ as introduced in the Sections \ref{ssec:Tree_parking_functions_m=n}, \ref{ssec:Mapping_parking_functions_m=n} and \ref{ssec:Tree_parking_functions_general}, respectively.

Let us consider parking functions for connected mappings for the case that $\tilde{m} = n-m$ parking spaces remain free after all drivers have parked successfully. In what follows it is advantageous to define $\tilde{C}_{n,\tilde{m}} \colonequals C_{n,n-\tilde{m}}$ and also to use $\tilde{F}_{n,\tilde{m}} \colonequals F_{n,n-\tilde{m}}$ as done previously.
Then it holds that $\tilde{C}_{n,0} = C_{n}$ and $\tilde{F}_{n,0} = F_{n}$.
Let us assume that $1 \le \tilde{m} \le n$. To obtain a recursive description of the numbers $\tilde{C}_{n,\tilde{m}}$ we use the combinatorial decomposition of a connected mapping $f \in \mathcal{C}_{n}$ w.r.t.\ the free node which has the largest label amongst all $\tilde{m}$ empty nodes in the mapping graph.

Three situations may occur when using this decomposition: $(i)$ the empty node with largest label is the root node of the Cayley tree which forms a length-$1$ cycle (depicted on the left hand side of Figure~\ref{fig:tree_parking_n=m}), $(ii)$ the empty node with largest label is the root node of a Cayley tree forming a cycle of at least two trees (depicted on the left hand side of Figure~\ref{fig:mapping_parking_n=m}) and $(iii)$ the empty node with largest label is not a  cyclic node (depicted on the right hand side of Figure~\ref{fig:mapping_parking_n=m}).
Analogous considerations to the ones given for tree parking functions in Section~\ref{ssec:Tree_parking_functions_general} show the following recursive description of the number of parking functions for connected mappings for $1 \le \tilde{m} \le n$:
\begin{align}
  \tilde{C}_{n,\tilde{m}} & = \frac{1}{\tilde{m}} \sum_{r \ge 0} \frac{1}{r!} \sum_{k_{1}+\cdots+k_{r}=n-1} \sum_{\ell_{1}+\cdots+\ell_{r}=\tilde{m}-1}
  \tilde{F}_{k_{1},\ell_{1}} \cdot \tilde{F}_{k_{2},\ell_{2}} \cdots \tilde{F}_{k_{r},\ell_{r}} \cdot\notag\\
  & \quad \cdot \binom{n}{k_{1}, k_{2}, \dots, k_{r}} \binom{n-\tilde{m}}{k_{1}-\ell_{1}, k_{2}-\ell_{2}, \dots, k_{r}-\ell_{r}}\notag\\
  & \quad \mbox{} + \frac{1}{\tilde{m}} \sum_{r \ge 0} \frac{1}{r!} \sum_{k+k_{1}+\cdots+k_{r}=n-1} \sum_{\ell+\ell_{1}+\cdots+\ell_{r}=\tilde{m}-1}
  \tilde{F}_{k,\ell} \tilde{F}_{k_{1},\ell_{1}} \cdots \tilde{F}_{k_{r},\ell_{r}} \cdot\label{eqn:Cnm_rec}\\
  & \qquad \cdot \binom{n}{k, k_{1}, \dots, k_{r}} \binom{n-\tilde{m}}{k-\ell,k_{1}-\ell_{1}, \dots, k_{r}-\ell_{r}} \cdot k\notag\\
  & \quad \mbox{} + \frac{1}{\tilde{m}} \sum_{r \ge 0} \frac{1}{r!} \sum_{k+k_{1}+\cdots+k_{r}=n-1} \sum_{\ell+\ell_{1}+\cdots+\ell_{r}=\tilde{m}-1}
  \tilde{C}_{k,\ell} \tilde{F}_{k_{1},\ell_{1}} \cdots \tilde{F}_{k_{r},\ell_{r}} \cdot\notag\\
  & \qquad \cdot \binom{n}{k, k_{1}, \dots, k_{r}} \binom{n-\tilde{m}}{k-\ell,k_{1}-\ell_{1}, \dots, k_{r}-\ell_{r}} \cdot k, \notag
\end{align}
with initial values $\tilde{C}_{n,0} = C_{n}$. When introducing the generating function
\begin{equation}\label{eqn:Czu_def}
  \tilde{C}(z,u) \colonequals \sum_{n \ge 1} \sum_{\tilde{m} \ge 0} \tilde{C}_{n,\tilde{m}} \frac{z^{n} u^{\tilde{m}}}{n! (n-\tilde{m})!},
\end{equation}
recurrence~\eqref{eqn:Cnm_rec} yields the following first order linear partial differential equation for the function $\tilde{C}(z,u)$:
\begin{equation}\label{eqn:Czu_PDE}
  \tilde{C}_{u}(z,u) = z^{2} \tilde{C}_{z}(z,u) \exp(\tilde{F}) + z \exp(\tilde{F}) + z^{2} \tilde{F}_{z} \exp(\tilde{F}),
\end{equation}
with $\tilde{F} = \tilde{F}(z,u) = \sum_{n,\tilde{m}} \tilde{F}_{n,\tilde{m}} \frac{z^{n} u^{\tilde{m}}}{n! (n-\tilde{m})!}$ the corresponding generating function for the number of tree parking functions given in Proposition~\ref{prop:Fzu_sol}, and initial condition $\tilde{C}(z,0) = C(z)$, with $C(z) = \sum_{n \ge 1} C_{n} \frac{z^{n}}{(n!)^{2}}$ given by \eqref{eqn:Cz_sol}.
A suitable representation of the solution of the PDE \eqref{eqn:Czu_PDE} is given in the following proposition.

\begin{prop}\label{prop:Czu_sol}
  The generating function $\tilde{C}(z,u)$ defined in \eqref{eqn:Czu_def} is given as follows:
  \begin{equation*}
    \tilde{C}(z,u) = \ln\left( \frac{1}{(1-Q)(1-u Q)}\right) ,
  \end{equation*}
  where the function $Q = Q(z,u)$ is given implicitly as the solution of the following functional equation:
  \begin{equation*}
	  Q = z \cdot e^{Q \cdot \left(2+u(1-Q)\right)}.
  \end{equation*}
\end{prop}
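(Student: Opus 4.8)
The plan is to exploit the fact that, unlike the quasilinear equation~\eqref{eqn:Fzu_pde} for $\tilde{F}$, the equation~\eqref{eqn:Czu_PDE} is a \emph{linear} first-order PDE for the unknown $\tilde{C}$: its coefficient $\exp(\tilde{F})$ and its inhomogeneity $z\exp(\tilde{F})+z^{2}\tilde{F}_{z}\exp(\tilde{F})$ are now known functions, since $\tilde{F}=\tilde{F}(z,u)$ has been determined in Proposition~\ref{prop:Fzu_sol}. Crucially, the principal part $\partial_{u}-z^{2}\exp(\tilde{F})\,\partial_{z}$ of~\eqref{eqn:Czu_PDE} coincides with that of~\eqref{eqn:Fzu_pde}. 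Hence the characteristic base curves in the $(z,u)$-plane and the first integrals $\xi_{1}=z\,e^{\tilde{F}}$ and $\xi_{2}=u-\tilde{F}/(z\,e^{\tilde{F}})$ obtained while solving~\eqref{eqn:Fzu_pde} are inherited verbatim, and along any such characteristic $\frac{d}{du}\bigl(z\,e^{\tilde{F}}\bigr)=0$, so $z\,e^{\tilde{F}}$ is constant. In principle the solution of~\eqref{eqn:Czu_PDE} is then obtained by integrating the source term along these characteristics and fitting the result to the initial condition $\tilde{C}(z,0)=C(z)$, with $C(z)$ given by~\eqref{eqn:Cz_sol}.

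In practice the cleanest route is to read off the closed form from this characteristic picture and then \emph{verify} it, exactly as was done for $\tilde{F}$. I would reuse the substitution $Q=\frac{1-\sqrt{1-4z\,e^{\tilde{F}}}}{2}$ from the proof of Proposition~\ref{prop:Fzu_sol}, for which $z\,e^{\tilde{F}}=Q(1-Q)$ by~\eqref{eqn:Fzu_Qzu_rel} and $Q$ solves the stated functional equation $Q=z\,e^{Q(2+u(1-Q))}$. Taking the logarithm of this functional equation and differentiating implicitly, first with respect to $z$ and then with respect to $u$, yields $Q_{z}$ and $Q_{u}$. The key algebraic step is that in both computations the same factor $1-2Q-uQ+2uQ^{2}$ appears in the denominator and factors as $(1-2Q)(1-uQ)$; this gives $Q_{z}=\frac{Q}{z(1-2Q)(1-uQ)}$ and $Q_{u}=\frac{Q^{2}(1-Q)}{(1-2Q)(1-uQ)}$, and it is precisely this factorization that produces the $(1-uQ)$ appearing in the claimed answer.

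With these formulas in hand I would substitute the candidate $\tilde{C}=-\ln(1-Q)-\ln(1-uQ)$ into~\eqref{eqn:Czu_PDE}. Using $z\,e^{\tilde{F}}=Q(1-Q)$ throughout, the left-hand side $\tilde{C}_{u}=\frac{Q_{u}}{1-Q}+\frac{Q+uQ_{u}}{1-uQ}$ and the transport part $z^{2}e^{\tilde{F}}\tilde{C}_{z}=zQ(1-Q)\bigl(\frac{Q_{z}}{1-Q}+\frac{uQ_{z}}{1-uQ}\bigr)$ both reduce to rational functions of $Q$ and $u$; simultaneously one checks that the inhomogeneity collapses, via the relation $1+z\tilde{F}_{z}=zQ_{z}\,\frac{1-2Q}{Q(1-Q)}$, to $z\,e^{\tilde{F}}(1+z\tilde{F}_{z})=zQ_{z}(1-2Q)=\frac{Q}{1-uQ}$. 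A term-by-term comparison then shows both sides agree. The initial condition is immediate: at $u=0$ the functional equation degenerates to $Q=z\,e^{2Q}$, which by~\eqref{eqn:Tz_feq} forces $Q(z,0)=\tfrac{1}{2}T(2z)$, whence $\tilde{C}(z,0)=\ln\frac{1}{1-T(2z)/2}=C(z)$ as required.

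I expect the main obstacle to be purely computational rather than conceptual: correctly re-expressing the tree-generating-function source term $z^{2}\tilde{F}_{z}\exp(\tilde{F})$ in terms of $Q$ and $Q_{z}$, and carrying the denominator factorization $(1-2Q)(1-uQ)$ cleanly through both $\tilde{C}_{u}$ and $z^{2}e^{\tilde{F}}\tilde{C}_{z}$ so that the many rational terms cancel to leave a single identity. Once that bookkeeping is organized, the verification is mechanical.
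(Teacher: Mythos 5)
Your proposal is correct and takes essentially the same route as the paper: the paper likewise rests on the implicit derivatives $Q_{z}=\frac{Q}{z(1-2Q)(1-uQ)}$ and $Q_{u}=\frac{Q^{2}(1-Q)}{(1-2Q)(1-uQ)}$, the observation that $Q$ solves the reduced PDE (equivalently, that $z e^{\tilde{F}}=Q(1-Q)$ is constant along the characteristics inherited from \eqref{eqn:Fzu_pde}), and the initial condition $Q(z,0)=\tfrac{1}{2}T(2z)$, the only difference being presentational—the paper substitutes $z=z(Q)$ to reduce \eqref{eqn:Czu_PDE} to $\hat{C}_{u}(Q,u)=\frac{Q}{1-uQ}$, integrates, and fits the arbitrary function $\tilde{h}$, whereas you verify the stated closed form directly. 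Your verification identities, in particular $\tilde{C}_{u}-z^{2}e^{\tilde{F}}\tilde{C}_{z}=\frac{Q}{1-uQ}$ and $z e^{\tilde{F}}\bigl(1+z\tilde{F}_{z}\bigr)=zQ_{z}(1-2Q)=\frac{Q}{1-uQ}$, all check out, and in the formal power series setting the PDE together with $\tilde{C}(z,0)=C(z)$ determines the solution uniquely, so the verification is complete.
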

\begin{proof}
  To solve equation \eqref{eqn:Czu_PDE} we first consider the partial derivatives of the function $Q=Q(z,u)$ occurring in the characterization of the function $\tilde{F} = \tilde{F}(z,u)$ given in Proposition~\ref{prop:Fzu_sol}. Starting with \eqref{eqn:Qzu_feq}, implicit differentiation yields
\begin{equation}\label{eqn:Qzu_partderiv}
  Q_{z} = \frac{Q}{z (1-2Q) (1-uQ)} \quad \text{and} \quad Q_{u} = \frac{Q^{2} (1-Q)}{(1-2Q) (1-uQ)}.
\end{equation}
Thus, due to \eqref{eqn:Fzu_Qzu_rel}, it holds
\begin{equation*}
  Q_{u}(z,u) = z^{2} Q_{z}(z,u) e^{\tilde{F}(z,u)},
\end{equation*}
i.e., $Q(z,u)$ solves the reduced PDE corresponding to \eqref{eqn:Czu_PDE}. This suggests the substitution $z = z(Q) \colonequals \frac{Q}{e^{Q(2+u(1-Q))}}$ and we introduce
\begin{equation*}
  \hat{C}(Q,u) \colonequals \tilde{C}(z(Q),u) = \tilde{C}\left(\frac{Q}{e^{Q(2+u(1-Q))}},u\right).
\end{equation*}
After straightforward computations, which are thus omitted, equation \eqref{eqn:Czu_PDE} reads as
\begin{equation*}
  \hat{C}_{u}(Q,u) = \frac{Q}{1-uQ}.
\end{equation*}
Thus, after backsubstituting $z$ and $\tilde{C}(z,u)$, the general solution of this equation is given by
\begin{equation}\label{eqn:Czu_sol_gen}
  \tilde{C}(z,u) = \ln\left(\frac{1}{1-u Q(z,u)}\right) + \tilde{h}(Q(z,u)),
\end{equation}
with an arbitrary differentiable function $\tilde{h}(x)$. To characterize it, we evaluate \eqref{eqn:Czu_sol_gen} at $u=0$ and use the initial condition $\tilde{C}(z,u) = C(z)$, with $C(z)$ given by \eqref{eqn:Cz_sol}. Using the abbreviation $\tilde{T} \colonequals \frac{T(2z)}{2}$, one easily gets $Q(z,0) = \tilde{T}$ and further
\begin{equation*}
  \tilde{h}(\tilde{T}) = \tilde{h}(Q(z,u)) = \tilde{C}(z,0) = C(z) = \ln\left(\frac{1}{1-\tilde{T}}\right),
\end{equation*}
which characterizes the function $\tilde{h}(x)$. The proposition follows immediately.
\end{proof}

We are now able to treat the total number $M_{n,m}$ of $(n,m)$-mapping parking functions. We introduce $\tilde{M}_{n,\tilde{m}} \colonequals M_{n,n-\tilde{m}}$ and the generating function
\begin{equation}\label{eqn:Mzu_def}
  \tilde{M}(z,u) \colonequals \sum_{n \ge 0} \sum_{\tilde{m} \ge 0} \tilde{M}_{n,\tilde{m}} \frac{z^{n} u^{\tilde{m}}}{n! (n-\tilde{m})!}.
\end{equation}
The decomposition of mapping parking functions into parking functions for their connected components immediately gives the relation
\begin{equation*}
  \tilde{M}(z,u) = \exp\left(\tilde{C}(z,u)\right)
\end{equation*}
for the respective generating functions. According to Proposition \ref{prop:Czu_sol} we obtain the following solution of $M(z,u)$.

\begin{prop}\label{prop:Mzu_sol}
  The generating function $\tilde{M}(z,u)$ defined in \eqref{eqn:Mzu_def} is given as follows:
  \begin{equation*}
    \tilde{M}(z,u) = \frac{1}{(1-Q)(1-u Q)},
  \end{equation*}
  where the function $Q = Q(z,u)$ is given implicitly as the solution of the following functional equation:
  \begin{equation*}
    Q = z \cdot e^{Q \cdot \left(2+u(1-Q)\right)}.
  \end{equation*}
\end{prop}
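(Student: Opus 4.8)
The plan is to obtain $\tilde M(z,u)$ directly from the closed form for $\tilde C(z,u)$ furnished by Proposition~\ref{prop:Czu_sol}, via the exponential relation $\tilde M(z,u)=\exp(\tilde C(z,u))$ recorded just above the statement. Concretely, since Proposition~\ref{prop:Czu_sol} gives
\[
\tilde C(z,u)=\ln\left(\frac{1}{(1-Q)(1-uQ)}\right),
\]
with $Q=Q(z,u)$ the solution of the functional equation \eqref{eqn:Qzu_feq}, exponentiating yields
\[
\tilde M(z,u)=\exp\bigl(\tilde C(z,u)\bigr)=\frac{1}{(1-Q)(1-uQ)},
\]
and the implicit characterization of $Q$ is unchanged. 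This is the asserted formula, so once the exponential relation is in hand the proof is a one-line computation.

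The real content therefore lies in justifying $\tilde M=\exp(\tilde C)$ at the level of the bivariate generating functions \eqref{eqn:Mzu_def} and \eqref{eqn:Czu_def}. I would argue this combinatorially. A functional digraph $G_{f}$ is the disjoint union ($\textsc{Set}$) of its weakly connected components, which partitions the node set $[n]$. The crucial observation is that a driver never moves between components: following the edges of $G_{f}$ keeps a driver inside the component of its preferred parking space, so each driver parks in that same component. Hence, for a length-$m$ sequence $s$, its restriction to the drivers landing in a fixed component is compatible with that component, and, by the order-independence established in Section~\ref{sec:Basic}, the restricted subsequence is itself a parking function for that connected component. Conversely, given parking functions on the components one recovers all global sequences $s$ by shuffling the per-component driver orderings.

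In terms of generating functions this bookkeeping is exactly the labelled $\textsc{Set}$ construction in \emph{two} independent exponential variables: the factor $z^{n}/n!$ handles the relabelling of the $n$ nodes among components, while the factor $u^{\tilde m}/(n-\tilde m)!=u^{\tilde m}/m!$ simultaneously marks the $\tilde m$ empty nodes and accounts, through the multinomial shuffle count $\binom{m}{m_{1},\dots,m_{t}}$, for distributing the $m$ drivers among the $t$ components. Both the node set and the driver set split additively over components, and the empty-node count $\tilde m$ is additive as well, so the admissibility of the $\textsc{Set}$ operator gives $\tilde M=\exp(\tilde C)$. The main point to check carefully is precisely this double-index admissibility---that the normalization $\tfrac{z^{n} u^{\tilde m}}{n!(n-\tilde m)!}$ is consistent with partitioning both labelled sets at once; the remaining algebra is the trivial exponentiation above.
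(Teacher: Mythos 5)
Your proposal is correct and follows essentially the same route as the paper: the paper states the relation $\tilde{M}(z,u)=\exp(\tilde{C}(z,u))$ immediately before the proposition (justified by the decomposition of a mapping into its connected components together with the shuffle of the per-component parking sequences) and then simply exponentiates the closed form of Proposition~\ref{prop:Czu_sol}. Your more careful verification that the double normalization $\tfrac{z^{n}u^{\tilde m}}{n!\,(n-\tilde m)!}$ is admissible for the \textsc{Set} construction is a correct elaboration of what the paper leaves implicit (note only that the reordering lemma is not really needed there, since restricting $s$ to the drivers of one component preserves their relative order and their parking paths never leave that component).
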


Using the representations of the generating functions $\tilde{F}(z,u)$ and $\tilde{M}(z,u)$ for the number of tree and mapping parking functions given in Proposition \ref{prop:Fzu_sol} and \ref{prop:Mzu_sol}, respectively, it can be shown easily how they are connected with each other. Namely, together with \eqref{eqn:Qzu_partderiv}, we obtain
\begin{align*}
  1+z \tilde{F}_{z}(z,u) & = 1 + z \left(\frac{1-2Q}{Q(1-Q)} Q_{z} - \frac{1}{z}\right) = \frac{1-2Q}{Q(1-Q)} \frac{Q}{(1-2Q)(1-uQ)} \\
	& = \frac{1}{(1-Q) (1-uQ)}
	 = \tilde{M}(z,u).
\end{align*}
Thus, at the level of their coefficients, we obtain the following simple relation between the total number of tree and mapping parking functions extending Theorem~\ref{thm:link_trees_mappings_m=n}.

\begin{theorem}\label{thm:link_trees_mappings_general}
For all $n \geq 1$ it holds that the total numbers $F_{n,m}$ and $M_{n,m}$ of $(n,m)$-tree parking functions and $(n,m)$-mapping parking functions, respectively, satisfy:
\begin{equation*}
  M_{n,m} = n \cdot F_{n,m}.
\end{equation*}
\end{theorem}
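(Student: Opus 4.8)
The plan is to read the claim directly off the generating function identity established in the display immediately preceding the theorem, namely $1 + z\tilde{F}_z(z,u) = \tilde{M}(z,u)$. All of the analytic work has already been carried out in Propositions~\ref{prop:Fzu_sol} and~\ref{prop:Mzu_sol} together with the implicit derivatives~\eqref{eqn:Qzu_partderiv}; what remains is purely a comparison of coefficients, so I do not expect a genuine obstacle. This mirrors exactly the passage from $1 + zF'(z) = M(z)$ to Theorem~\ref{thm:link_trees_mappings_m=n} in the case $m=n$, now carried out in the bivariate setting.

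First I would recall the coefficient conventions from \eqref{eqn:Fzu_def} and \eqref{eqn:Mzu_def}, writing
\[
  \tilde{F}(z,u) = \sum_{n \ge 1} \sum_{\tilde{m} \ge 0} \tilde{F}_{n,\tilde{m}} \frac{z^{n} u^{\tilde{m}}}{n!\,(n-\tilde{m})!}, \qquad
  \tilde{M}(z,u) = \sum_{n \ge 0} \sum_{\tilde{m} \ge 0} \tilde{M}_{n,\tilde{m}} \frac{z^{n} u^{\tilde{m}}}{n!\,(n-\tilde{m})!}.
\]
The key observation is that the Euler operator $z\,\partial_z$ acts diagonally on this weighted basis: applying it to $\tilde{F}(z,u)$ multiplies the $(n,\tilde{m})$-coefficient by $n$. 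Hence $z\tilde{F}_z(z,u)$ is obtained from $\tilde{F}(z,u)$ by replacing each $\tilde{F}_{n,\tilde{m}}$ with $n\,\tilde{F}_{n,\tilde{m}}$, while retaining the identical weights $z^n u^{\tilde{m}}/(n!\,(n-\tilde{m})!)$.

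Next I would extract the coefficient of $z^n u^{\tilde{m}}/(n!\,(n-\tilde{m})!)$ from both sides of $1 + z\tilde{F}_z(z,u) = \tilde{M}(z,u)$. The additive constant $1$ contributes only to the term $(n,\tilde{m}) = (0,0)$, so for every $n \ge 1$ and $\tilde{m} \ge 0$ the comparison yields
\[
  n\,\tilde{F}_{n,\tilde{m}} = \tilde{M}_{n,\tilde{m}}.
\]
Finally, unwinding the abbreviations $\tilde{F}_{n,\tilde{m}} = F_{n,n-\tilde{m}}$ and $\tilde{M}_{n,\tilde{m}} = M_{n,n-\tilde{m}}$ and setting $m := n - \tilde{m}$ (so that $\tilde{m}$ ranges over $0 \le \tilde{m} \le n$ precisely when $0 \le m \le n$) gives $M_{n,m} = n\,F_{n,m}$ for all $n \ge 1$ and $0 \le m \le n$, which is the asserted relation.

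The only point requiring a little care is the bookkeeping of the mixed weight $1/(n!\,(n-\tilde{m})!)$ under $z\,\partial_z$, but since this operator leaves the $u$-variable untouched the factorials match term by term and no cross-terms arise; this is what makes the coefficient comparison clean. In this sense the entire content of the theorem is already packaged in the single line $1 + z\tilde{F}_z(z,u) = \tilde{M}(z,u)$, and the proof is little more than an honest accounting of the exponential generating function normalizations.
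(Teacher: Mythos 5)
Your proposal is correct and takes essentially the same route as the paper, which likewise obtains the identity $1+z\tilde{F}_{z}(z,u)=\tilde{M}(z,u)$ from Propositions~\ref{prop:Fzu_sol} and~\ref{prop:Mzu_sol} together with \eqref{eqn:Qzu_partderiv} and then concludes ``at the level of their coefficients.'' Your explicit bookkeeping --- that $z\,\partial_z$ acts diagonally on the weights $z^{n}u^{\tilde{m}}/(n!\,(n-\tilde{m})!)$ and that the constant $1$ only contributes to $(n,\tilde{m})=(0,0)$ --- is exactly the coefficient-comparison step the paper leaves implicit, and it is carried out correctly.
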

In Section~\ref{ssec:bijection_general} we will extend the considerations made in Section~\ref{ssec:bijection_m=n} for the particular case $m=n$ and provide a combinatorial proof of this relation.

Using Proposition~\ref{prop:Mzu_sol}, extracting coefficients leads to the following explicit formul{\ae} for the numbers $M_{n,m}$ and $F_{n,m}$. Note that specializing $m=n$ restates Theorem~\ref{thm:m=n_mappings} and Corollary~\ref{cor:m=n_trees}.
\begin{theorem}\label{thm:Mnm_formula}
  The total number $M_{n,m}$ of $(n,m)$-mapping parking functions is, for $0 \le m \le n$ and $n \ge 1$, given as follows:
  \begin{align*}
    M_{n,m} = \frac{(n-1)! m! n^{n-m}}{(n-m)!} \sum_{j=0}^{m} \binom{2m-n-j}{m-j} \frac{(2n)^{j} (n-j)}{j!}.
  \end{align*}
\end{theorem}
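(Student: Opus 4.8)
The plan is to read off the bivariate coefficient of the generating function $\tilde M(z,u)$ furnished by Proposition~\ref{prop:Mzu_sol} and then simplify. From the definition \eqref{eqn:Mzu_def} together with $\tilde M_{n,\tilde m} = M_{n,n-\tilde m}$, writing $\tilde m \colonequals n-m$, one has
\[
  M_{n,m} = n!\,m!\,[z^n u^{\tilde m}]\,\tilde M(z,u), \qquad \tilde M(z,u) = \frac{1}{(1-Q)(1-uQ)},
\]
where $Q=Q(z,u)$ is governed by the functional equation \eqref{eqn:Qzu_feq}, i.e.\ $Q = z\,e^{Q(2+u(1-Q))}$. I would extract the two coefficients successively, first in $z$ (with $u$ held as a parameter), then in $u$.

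For the $z$-extraction the decisive point is to pick the form of Lagrange--B\"urmann inversion that keeps the $u$-dependence isolated. Setting $\phi(Q)\colonequals e^{Q(2+u(1-Q))}$, so that $Q=z\phi(Q)$, the implicit derivatives \eqref{eqn:Qzu_partderiv} give $1 - z\phi'(Q) = (1-2Q)(1-uQ)$. Hence $\tilde M$ takes the shape $\tilde M = G(Q)/(1-z\phi'(Q))$ with
\[
  G(Q) = \frac{(1-2Q)(1-uQ)}{(1-Q)(1-uQ)} = \frac{1-2Q}{1-Q},
\]
where the factor $1-uQ$ cancels, leaving $G$ \emph{free of} $u$. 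Applying the identity $[z^n]\,G(Q)/(1-z\phi'(Q)) = [Q^n]\,G(Q)\,\phi(Q)^n$, which follows from the Lagrange inversion formula applied to an antiderivative of $G/\phi$, then yields
\[
  [z^n]\tilde M(z,u) = [Q^n]\,\frac{1-2Q}{1-Q}\,e^{2nQ}\,e^{nuQ(1-Q)}.
\]

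Now all of the $u$-dependence lives in the single exponential $e^{nuQ(1-Q)}$, so $[u^{\tilde m}]$ is immediate: it replaces this factor by $n^{\tilde m}Q^{\tilde m}(1-Q)^{\tilde m}/\tilde m!$. Using $\tfrac{1-2Q}{1-Q}(1-Q)^{\tilde m} = (1-2Q)(1-Q)^{\tilde m-1}$ and the shift $[Q^n]Q^{\tilde m} = [Q^{m}]$ (since $n-\tilde m = m$), I arrive at
\[
  M_{n,m} = n!\,m!\,\frac{n^{\tilde m}}{\tilde m!}\,[Q^m]\,(1-2Q)(1-Q)^{\tilde m-1}\,e^{2nQ}.
\]
It then remains to expand this last coefficient. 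Writing $(1-Q)^{\tilde m-1} = \sum_i \binom{\tilde m-1}{i}(-Q)^i$ and reading off $[Q^{m-i}]$ of $(1-2Q)e^{2nQ}$, the central algebraic step is the identity $\tfrac{(2n)^j}{j!}-\tfrac{2(2n)^{j-1}}{(j-1)!} = \tfrac{(2n)^j(n-j)}{n\,j!}$ (with $j=m-i$), which is exactly what manufactures the factor $(n-j)/n$ in the claim. Finally the upper-negation identity $\binom{n-m-1}{m-j}(-1)^{m-j} = \binom{2m-n-j}{m-j}$ converts the binomial into the stated one, and collecting $n!\,m!\,n^{\tilde m}/(n\,\tilde m!) = (n-1)!\,m!\,n^{n-m}/(n-m)!$ gives the asserted formula.

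I expect the main obstacle to be exactly the choice in the second step. The naive Lagrange form $[z^n]\tilde M = \tfrac1n[Q^{n-1}]\,\partial_Q\bigl(\tfrac{1}{(1-Q)(1-uQ)}\bigr)\phi(Q)^n$ leaves $u$ entangled between the differentiated $(1-uQ)^{-1}$ factors and the exponential $\phi(Q)^n$, turning the subsequent $[u^{\tilde m}]$-extraction into an unpleasant convolution. Recognising that dividing by $1-z\phi'(Q)=(1-2Q)(1-uQ)$ cancels the offending $(1-uQ)$ and confines \emph{all} $u$-dependence to $e^{nuQ(1-Q)}$ is the crux; once this is arranged, the rest is routine binomial bookkeeping, and as a sanity check one verifies that the specialisation $m=n$ reproduces Theorem~\ref{thm:m=n_mappings}.
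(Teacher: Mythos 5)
Your proposal is correct and follows essentially the same route as the paper: the paper performs the $z$-extraction via Cauchy's integral formula with the substitution $z \mapsto Q$ (using \eqref{eqn:Qzu_feq} and \eqref{eqn:Qzu_partderiv}), which is exactly the Lagrange--B\"urmann variant $[z^{n}]\,G(Q)/(1-z\phi'(Q)) = [Q^{n}]\,G(Q)\phi(Q)^{n}$ you invoke, and indeed $1-z\phi'(Q)=(1-2Q)(1-uQ)$ so both arrive at the identical intermediate expression $[Q^{n}]\frac{1-2Q}{1-Q}e^{2nQ}e^{nuQ(1-Q)}$. From there your $u$-extraction, binomial expansion, the identity producing the factor $(n-j)$, and the upper-negation step coincide with the paper's computation in \eqref{eqn:Mzu_coeff_Q} and \eqref{eqn:Mnm_Q}.
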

\begin{coroll}
  The total number $F_{n,m}$ of $(n,m)$-tree parking functions is, for $0 \le m \le n$ and $n \ge 1$, given as follows:
  \begin{align*}
    F_{n,m} = \frac{(n-1)! m! n^{n-m-1}}{(n-m)!} \sum_{j=0}^{m} \binom{2m-n-j}{m-j} \frac{(2n)^{j} (n-j)}{j!}.
  \end{align*}
\end{coroll}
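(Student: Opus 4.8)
The plan is to read off $M_{n,m}$ directly from the closed form for $\tilde{M}(z,u)$ in Proposition~\ref{prop:Mzu_sol} by Lagrange inversion. From the definition \eqref{eqn:Mzu_def} together with $\tilde{M}_{n,\tilde{m}} = M_{n,n-\tilde{m}}$ and $\tilde{m} = n-m$ (so that $n-\tilde{m}=m$), one has
\[
  M_{n,m} = n!\,m!\,[z^{n} u^{n-m}]\,\tilde{M}(z,u),
\]
so the entire task reduces to extracting a single bivariate coefficient of $\frac{1}{(1-Q)(1-uQ)}$, where $Q = Q(z,u)$ solves $Q = z\,e^{Q(2+u(1-Q))}$. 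I would treat $u$ as a formal parameter and first extract $[z^{n}]$ by Lagrange inversion in $z$: writing $Q = z\,\phi(Q)$ with $\phi(Q) = e^{Q(2+u(1-Q))}$ and $\phi(0)=1$, the inversion applies for each fixed $u$ and the resulting coefficients are polynomials in $u$.

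The key simplification I expect to exploit is the non-derivative form of Lagrange inversion, $[z^{n}] H(Q) = [Q^{n}]\,H(Q)\,\phi(Q)^{n}\bigl(1 - Q\,\phi'(Q)/\phi(Q)\bigr)$. A short computation gives $Q\,\phi'(Q)/\phi(Q) = Q(2+u-2uQ)$, hence $1 - Q\,\phi'(Q)/\phi(Q) = (1-2Q)(1-uQ)$, which cancels the factor $(1-uQ)$ in the denominator of $H(Q) = \frac{1}{(1-Q)(1-uQ)}$. Combined with $\phi(Q)^{n} = e^{2nQ}\,e^{nuQ(1-Q)}$ this yields the clean intermediate expression $[z^{n}]\tilde{M}(z,u) = [Q^{n}]\,\frac{1-2Q}{1-Q}\,e^{2nQ}\,e^{nuQ(1-Q)}$. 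Since the remaining $u$-dependence now sits only in $e^{nuQ(1-Q)}$, extracting $[u^{n-m}]$ is immediate, contributing $\frac{n^{n-m}Q^{n-m}(1-Q)^{n-m}}{(n-m)!}$ and reducing everything to the univariate coefficient $\frac{n^{n-m}}{(n-m)!}\,[Q^{m}](1-2Q)(1-Q)^{n-m-1}e^{2nQ}$.

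It then remains to evaluate $A \colonequals [Q^{m}](1-2Q)(1-Q)^{n-m-1}e^{2nQ}$ and identify it with $\frac{1}{n}\sum_{j=0}^{m}\binom{2m-n-j}{m-j}\frac{(2n)^{j}(n-j)}{j!}$. Expanding $e^{2nQ}=\sum_{j}(2n)^{j}Q^{j}/j!$ and the polynomial $(1-2Q)(1-Q)^{n-m-1}$, and applying upper negation $(-1)^{b}\binom{c}{b} = \binom{b-c-1}{b}$, one gets $A = S_{1} - 2S_{2}$ with $S_{1} = \sum_{j}\frac{(2n)^{j}}{j!}\binom{2m-n-j}{m-j}$ and $S_{2} = \sum_{j}\frac{(2n)^{j}}{j!}\binom{2m-n-j-1}{m-j-1}$. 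I anticipate this binomial bookkeeping — in particular carrying the correct signs and getting every term into the shape $\binom{2m-n-j}{m-j}$ — to be the main (if routine) obstacle; the indeterminate forms that appear when $2m-n-j=0$ make it essential not to simplify term by term. The decisive step is instead to reindex $S_{2}$ by $j\mapsto j+1$: using $\binom{2m-n-j-1}{m-j-1}=\binom{2m-n-(j+1)}{m-(j+1)}$ and $\frac{(2n)^{j}}{j!}=\frac{j+1}{2n}\frac{(2n)^{j+1}}{(j+1)!}$ (the tail term vanishes as its lower index becomes negative), one finds $2S_{2} = \frac{1}{n}\sum_{j}\frac{(2n)^{j}\,j}{j!}\binom{2m-n-j}{m-j}$, so that
\[
  A = S_{1}-2S_{2} = \sum_{j=0}^{m}\frac{(2n)^{j}}{j!}\binom{2m-n-j}{m-j}\Bigl(1-\tfrac{j}{n}\Bigr),
\]
which is exactly the claimed sum.

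Reassembling with the prefactor $n!\,m!\,n^{n-m}/(n-m)!$ produces the stated formula for $M_{n,m}$, and the Corollary for $F_{n,m}$ follows immediately from $F_{n,m} = M_{n,m}/n$ (Theorem~\ref{thm:link_trees_mappings_general}). As a sanity check I would verify that setting $m=n$ collapses $\binom{2m-n-j}{m-j}$ to $1$ and recovers Theorem~\ref{thm:m=n_mappings}, and that the smallest case $n=m=1$ gives $M_{1,1}=1$.
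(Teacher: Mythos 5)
Your proposal is correct and follows essentially the same route as the paper: the paper extracts $[z^{n}]\tilde{M}(z,u)$ via a Cauchy-integral change of variables $z \mapsto Q$ using the functional equation \eqref{eqn:Qzu_feq} and the derivative \eqref{eqn:Qzu_partderiv} (which is exactly your non-derivative Lagrange inversion, producing the same Jacobian factor $(1-2Q)(1-uQ)$), and arrives at the identical intermediate expression $\frac{n!\,m!\,n^{n-m}}{(n-m)!}\,[Q^{m}](1-2Q)(1-Q)^{n-m-1}e^{2nQ}$. From there the paper performs the same routine binomial expansion and reindexing (in a slightly different order than your $S_{1}-2S_{2}$ split, but with the same upper-negation step) and, like you, deduces the corollary from $M_{n,m}=n\,F_{n,m}$ (Theorem~\ref{thm:link_trees_mappings_general}).
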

\begin{proof}[Proof of Theorem~\ref{thm:Mnm_formula}]
  In view of the representation of $\tilde{M}(z,u)$ given in Proposition~\ref{prop:Mzu_sol} containing the function $Q=Q(z,u)$, we make a change of variables in order to extract coefficients. 
Using the functional equation \eqref{eqn:Qzu_feq} and the derivative \eqref{eqn:Qzu_partderiv} of $Q$ w.r.t.\ $z$, an application of the Cauchy integral formula (choosing as contour a suitable simple positively oriented closed curve around the origin) gives
	\begin{align*}
	  [z^{n}] \tilde{M}(z,u) & = \frac{1}{2 \pi i} \oint \frac{\tilde{M}(z,u)}{z^{n+1}} dz =
		\frac{1}{2 \pi i} \oint \frac{1}{z^{n+1}} \frac{1}{(1-Q) (1-uQ) } dz\\
		& =
		\frac{1}{2 \pi i} \oint \frac{e^{(uQ(1-Q)+2Q) (n+1)}}{(1-Q) (1-uQ) Q^{n+1}} \frac{(1-2Q)(1-uQ)}{e^{uQ(1-Q)+2Q}} dQ\\
		& = [Q^{n}] \frac{e^{n(uQ(1-Q)+2Q)} (1-2Q)}{1-Q}.
	\end{align*}
	Further, for $0 \le m \le n$,
	\begin{equation}\label{eqn:Mzu_coeff_Q}
	\begin{split}
	  [z^{n} u^{n-m}]\tilde{M}(z,u) & = [Q^{n} u^{n-m}] \frac{e^{u n Q (1-Q)} e^{2nQ} (1-2Q)}{1-Q}\\
		& = \frac{n^{n-m}}{(n-m)!} [Q^{m}] (1-Q)^{n-m-1} e^{2nQ} (1-2Q).
	\end{split}
	\end{equation}
	We get
	\begin{align}\label{eqn:Mnm_Q}
	  \notag M_{n,m} & = n! m! [z^{n} u^{n-m}] \tilde{M}(z,u) \\ 
	  & = \frac{n! m! n^{n-m}}{(n-m)!} [Q^{m}] (1-Q)^{n-m-1} e^{2nQ} (1-2Q) \\ \notag
	 & = \frac{n! m! n^{n-m}}{(n-m)!} \sum_{j=0}^{m} \binom{n-m-1}{j} (-1)^{j} [Q^{m-j}] e^{2nQ} (1-2Q)\\ \notag
		& = \frac{n! m! n^{n-m}}{(n-m)!} \sum_{j=0}^{m} \binom{n-m-1}{j} (-1)^{j} \frac{2 (n-m+j) (2n)^{m-j-1}}{(m-j)!}\\ \notag
		& = \frac{(n-1)! m! n^{n-m}}{(n-m)!} \sum_{j=0}^{m} \binom{j+m-n}{j} \frac{(n-m+j) \, (2n)^{m-j}}{(m-j)!}\\ \notag
		& = \frac{(n-1)! m! n^{n-m}}{(n-m)!} \sum_{j=0}^{m} \binom{2m-n-j}{m-j} \frac{(n-j) \, (2n)^{j}}{j!}.
	\end{align}
\end{proof}
From Theorem~\ref{thm:Mnm_formula} we can easily derive exact values for the total number of $(n,m)$-mapping parking functions for a moderate size of $n$.
However, due to the alternating sign of the summands in the explicit formula of $M_{n,m}$  that is inherent in the binomial coefficient, it is not well suited to deduce the asymptotic behaviour of these numbers and thus to give answers to questions concerning the probability $p_{n,m}$ that a random pair $(f,s)$ of an $n$-mapping $f$ and a sequence $s \in [n]^{m}$ of addresses of length $m$ is a parking function, when $n \to \infty$. Starting from \eqref{eqn:Mzu_coeff_Q}, such asymptotic considerations will be carried out in Section~\ref{ssec:asymptotics} using saddle point methods.

\subsection{Bijective relation between parking functions for trees and mappings\label{ssec:bijection_general}}

We will extend the bijection given in Theorem~\ref{thm:bijection_parking_m=n}, such that it also works for the general case and thus gives a bijective proof of Theorem~\ref{thm:link_trees_mappings_general}. The corresponding bijection $\varphi'$ is illustrated in Figure~\ref{fig:bij_tree_mapping_general}.
\begin{figure}
\begin{center}
\includegraphics[height=5cm]{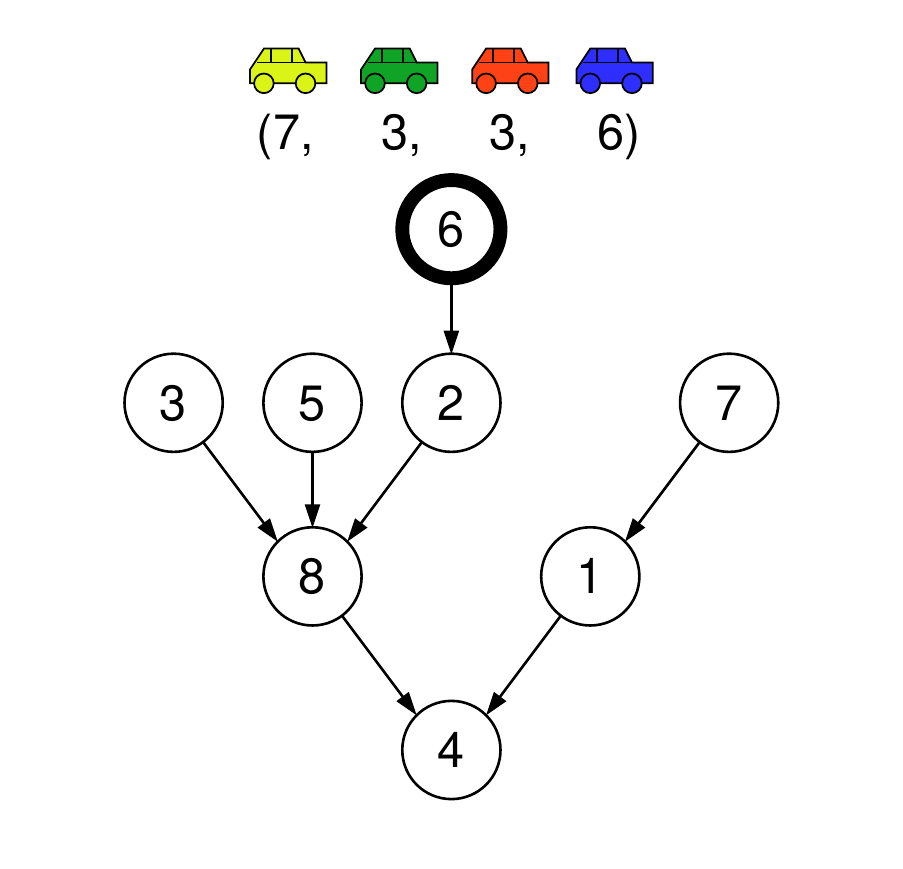}
\raisebox{1.6cm}{$\Rightarrow$}
\includegraphics[height=5cm]{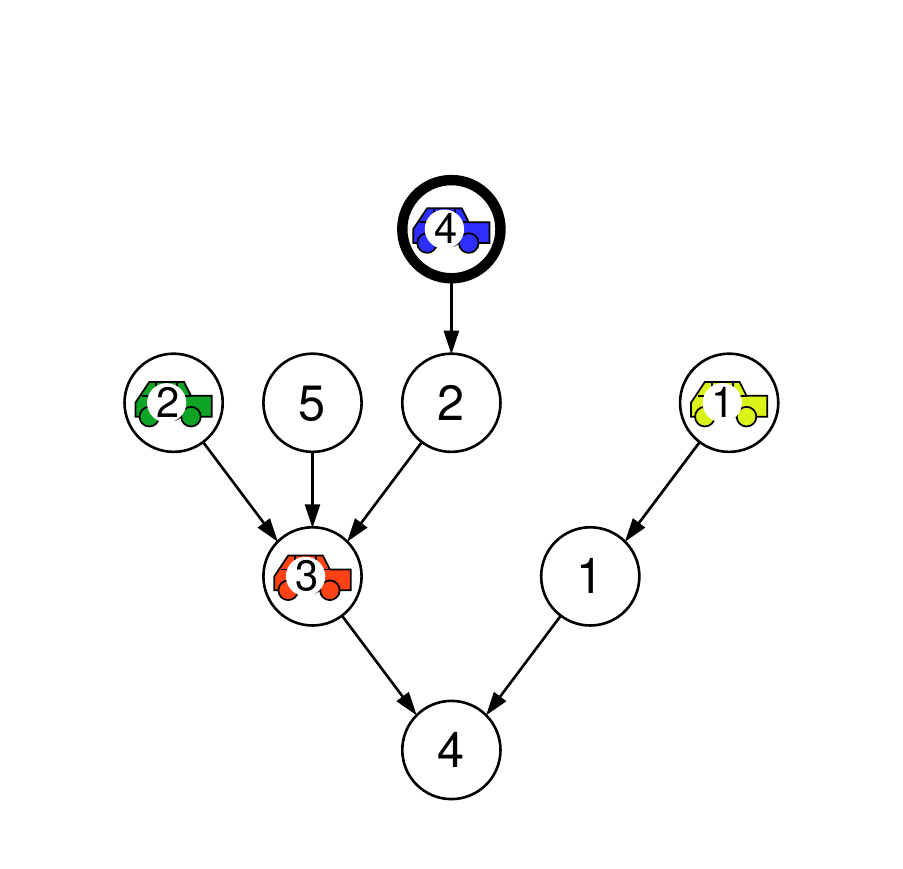}
\raisebox{1.6cm}{$\Rightarrow$}\\
\includegraphics[height=5cm]{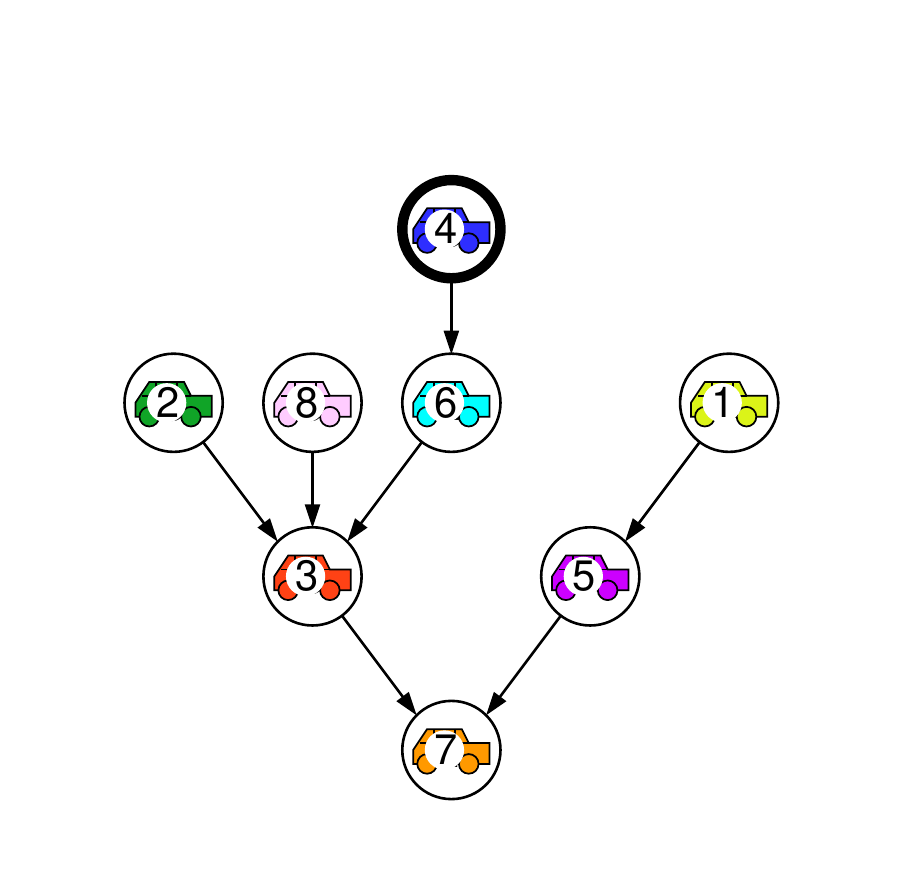}
\raisebox{1.6cm}{$\widehat{=}$}
\includegraphics[height=5cm]{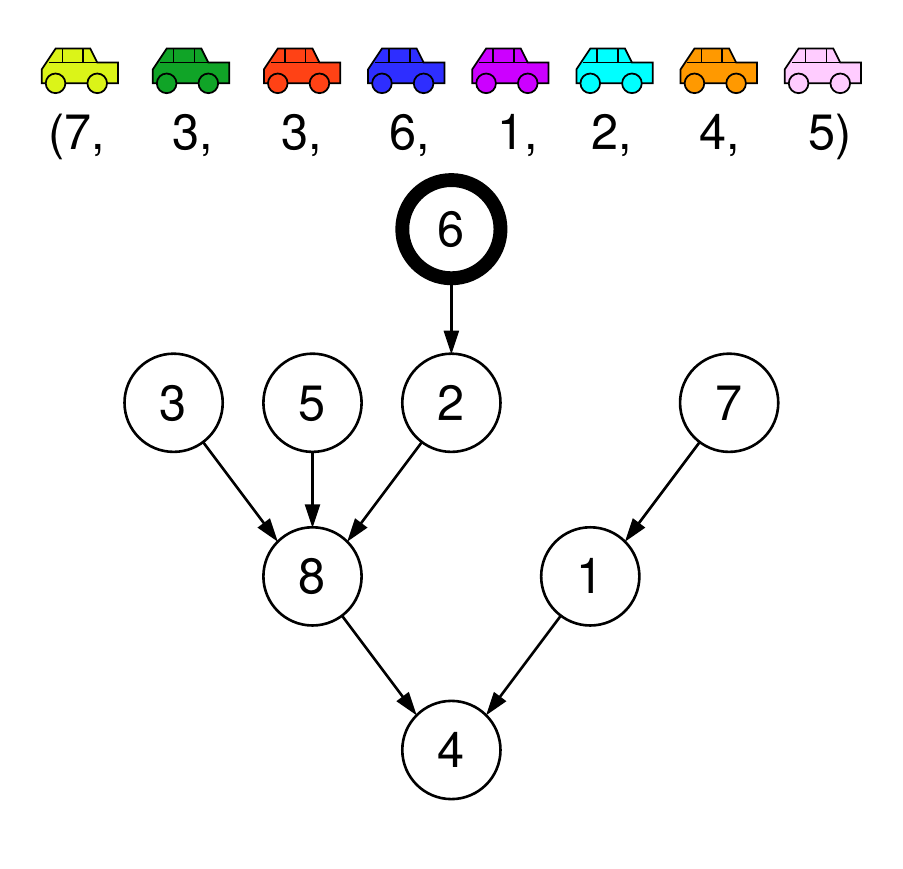}
\raisebox{1.6cm}{$\stackrel{\varphi}{\Rightarrow}$}\\
\raisebox{1.6cm}{$\stackrel{\varphi}{\Rightarrow}$}
\includegraphics[height=5cm]{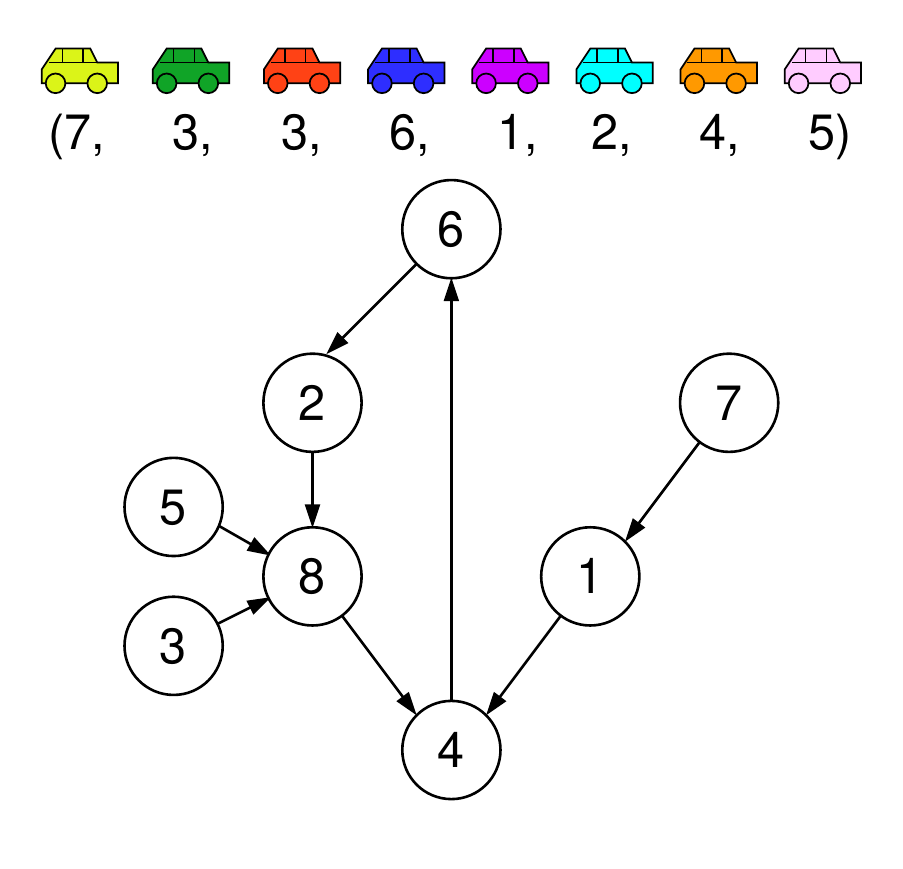}
%\raisebox{1.6cm}{$\widehat{=}$}
%\includegraphics[height=5cm]{}
\raisebox{1.6cm}{$\Rightarrow$}
\includegraphics[height=5cm]{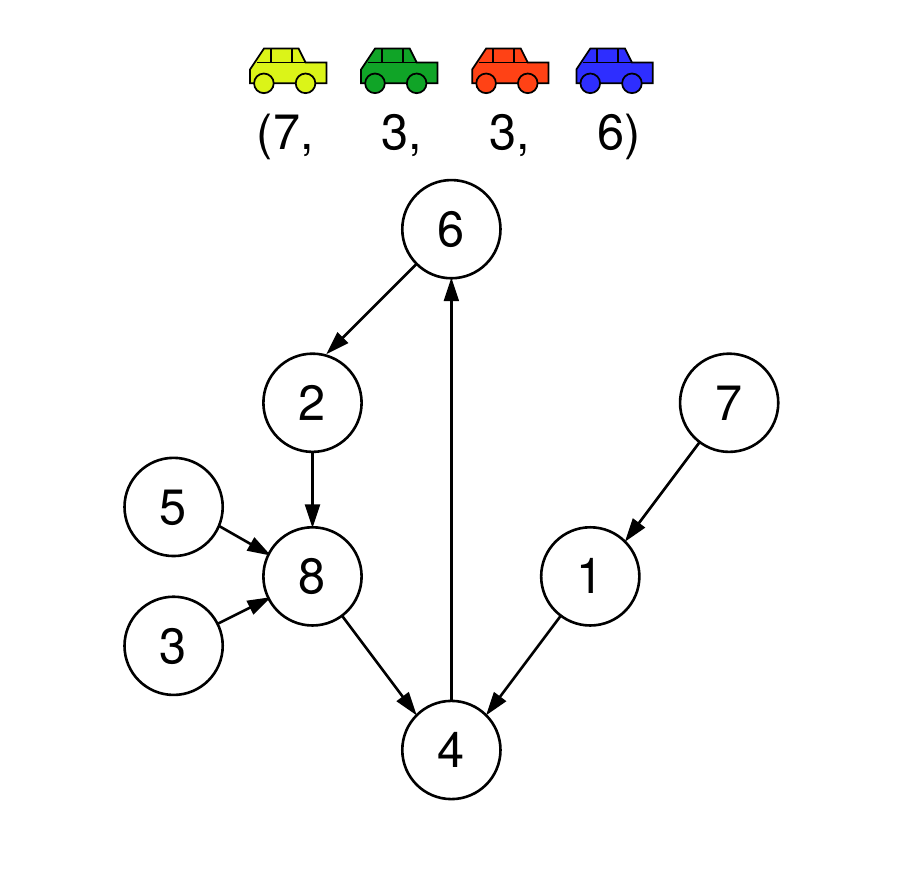}
\end{center}
\caption{The bijection $\varphi'$ described in Theorem~\ref{thm:bijection_parking_general} is applied to the triple $(T,s,w)$ with $T$ a size-$8$ tree, $s=(7,3,3,6)$ a parking function for $T$ with $4$ drivers and the node $w=6$ which is marked in $T$ in the top left corner.
It yields the mapping parking function $(f,s)$ with $f : [8] \to [8]$ an $8$-mapping represented in the bottom right corner. The function $\varphi$ denotes the bijection described in the proof of Theorem~\ref{thm:bijection_parking_m=n}.\label{fig:bij_tree_mapping_general}}
\end{figure}

\begin{theorem}\label{thm:bijection_parking_general}
  For $0 \le m \le n$ and $n \ge 1$, there exists a bijection $\varphi'$ from the set of triples $(T,s,w)$, with $T \in \mathcal{T}_{n}$ a tree of size $n$, $s \in [n]^{m}$ a parking function for $T$ with $m$ drivers, and $w \in T$ a node of $T$, to the set of pairs $(f,s)$ of $(n,m)$-mapping parking functions, i.e, $f \in \mathcal{M}_{n}$ an $n$-mapping and $s \in [n]^{m}$ a parking function for $f$ with $m$ drivers. Thus
\begin{equation*}
  n \cdot F_{n,m} = M_{n,m}, \quad \text{for $n \ge 1$}.
\end{equation*}
\end{theorem}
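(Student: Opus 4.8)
The plan is to reduce the general case to the already-established bijection $\varphi$ of Theorem~\ref{thm:bijection_parking_m=n} by canonically completing an $(n,m)$-parking function to an $(n,n)$-parking function, applying $\varphi$, and truncating again. Let $(T,s,w)$ be a triple with $s \in [n]^{m}$ a parking function for $T$, and set $\tilde{m} = n-m$. After the $m$ drivers have parked, the occupied nodes are exactly $\text{im}(\pi_{(T,s)})$, and the $\tilde{m}$ empty nodes form a set $E = \{e_{1} < e_{2} < \cdots < e_{\tilde{m}}\}$ sorted by label. First I would define the completed sequence $\bar{s} \colonequals (s_{1},\dots,s_{m},e_{1},\dots,e_{\tilde{m}}) \in [n]^{n}$. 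Since each appended driver prefers a currently empty node and the $e_{i}$ are pairwise distinct, the $(m+i)$-th driver parks exactly at $e_{i}$; hence $\bar{s}$ is a parking function for $T$ with $n$ drivers, every node is occupied, and $\pi_{(T,\bar{s})}$ restricts to $\pi_{(T,s)}$ on $[m]$.

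Next I would apply $\varphi$ to the triple $(T,\bar{s},w)$, obtaining an $(n,n)$-mapping parking function $(f,\bar{s})$, and define $\varphi'(T,s,w) \colonequals (f,s)$, i.e.\ I keep $f$ and discard the $\tilde{m}$ appended drivers. Because any prefix of a parking function is again a parking function (the success of driver $k$ depends only on drivers $1,\dots,k-1$), $(f,s)$ is an $(n,m)$-mapping parking function, so $\varphi'$ is well defined.

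The crucial point — and the step I expect to carry the argument — is the invariance of the empty-node set under $\varphi$. Recall from the proof of Theorem~\ref{thm:bijection_parking_m=n} that $\varphi$ alters only the edges $(v_{i_{\ell}},T(v_{i_{\ell}}))$ emanating from the right-to-left maxima on the path $w \leadsto \rt(T)$, and that these edges are never traversed by any driver of $\bar{s}$. Consequently the parking paths of all drivers of $\bar{s}$ coincide in $T$ and in $f$; in particular $\pi_{(f,\bar{s})} = \pi_{(T,\bar{s})}$, and restricting to the first $m$ drivers gives $\pi_{(f,s)} = \pi_{(T,s)}$. Therefore the occupied nodes of $(f,s)$ are exactly $\text{im}(\pi_{(T,s)})$, so $(f,s)$ has the same empty-node set $E$ as $(T,s)$. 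This is precisely what makes the completion compatible with $\varphi$, and it is the only genuinely new ingredient beyond the $m=n$ construction.

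Finally I would exhibit the inverse. Given an $(n,m)$-mapping parking function $(f,s)$, I read off its empty nodes $E = \{e_{1} < \cdots < e_{\tilde{m}}\}$, form the same completion $\bar{s} = (s_{1},\dots,s_{m},e_{1},\dots,e_{\tilde{m}})$ — which fills all nodes and is hence a parking function for $f$ with $n$ drivers — apply $\varphi^{-1}$ to $(f,\bar{s})$ to recover a triple $(T,\bar{s},w)$, and output $(T,s,w)$. The invariance established above shows that the empty set computed in the forward and in the backward direction agree, so the two completions coincide; combined with the bijectivity of $\varphi$ this yields that $\varphi'$ and this map are mutually inverse. The counting identity $n\cdot F_{n,m} = M_{n,m}$ then follows by comparing the cardinalities of the two sets, the factor $n$ accounting for the free choice of $w \in T$.
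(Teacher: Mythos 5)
Your proposal is correct and follows essentially the same route as the paper: complete $s$ to an $(n,n)$-parking function by appending the empty nodes in ascending label order, apply the bijection $\varphi$ of Theorem~\ref{thm:bijection_parking_m=n}, truncate back to $s$, and invert by reconstructing the same canonical completion from $(f,s)$. Your explicit verification that the empty-node sets of $(T,s)$ and $(f,s)$ coincide (via the coinciding parking paths, hence $\pi_{(f,s)} = \pi_{(T,s)}$) is exactly the point the paper relies on when it asserts that ``the extension from $s$ to $s'$ can also be constructed when starting with $f$,'' so you have merely made the paper's argument more explicit, not different.
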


\begin{remark}
	It holds that the parking function $s$ remains unchanged under the bijection $\varphi'$. Thus, also the general case satisfies the relation
\begin{equation*}
  n \cdot \hat{F}_{n}(s) = \hat{M}_{n}(s), \quad \text{for $n \ge 1$},
\end{equation*}
where again $\hat{F}_{n}(s)$ and $\hat{M}_{n}(s)$ denote the number of trees $T \in \mathcal{T}_{n}$ and mappings $f \in \mathcal{M}_{n}$, respectively, such that a given sequence $s \in [n]^{m}$ is a parking function for $T$ and $f$, respectively.
\end{remark}

\begin{remark}
  The case $m=0$ gives one of the many bijective proofs of the relation $n \cdot T_{n} = M_{n}$, thus showing $T_{n} = n^{n-1}$.
\end{remark}

\begin{proof}[Proof of Theorem~\ref{thm:bijection_parking_general}.]
  In order to a establish a bijection $\varphi'$ from the set of triples $(T,s,w)$ to pairs $(f,s)$, we will first extend the tree parking function $s \in [n]^{m}$ to a tree parking function $s' \in [n]^{n}$ with $n$ drivers, then apply the bijection $\varphi$ described in the proof of Theorem~\ref{thm:bijection_parking_m=n}, and finally reduce $s'$ to the original parking function $s$. 
We only need to ensure that the extension from $s$ to $s'$ is done in such a way that the whole procedure can be reversed in a unique way. This can be done as follows.

Starting with a triple $(T,s,w)$, let us denote by $V_{\pi}$ the set of nodes which are occupied after the parking procedure, i.e., $V_{\pi} \colonequals \pi([m]) = \{\pi(k) : 1 \le k \le m\}$, where $\pi = \pi_{(T,s)}$ is the output-function of $(T,s)$. Let us arrange the $n-m$ free nodes in ascending order w.r.t.\ their labels: $V \setminus V_{\pi} = \{x_{1}, x_{2}, \dots, x_{n-m}\}$, with $x_{1} < x_{2} < \cdots < x_{n-m}$. Then we define the sequence $s' = (s_{1}', \dots, s_{n}') \in [n]^{n}$ as follows:
\begin{align*}
  s_{i}' \colonequals s_{i}, & \quad 1 \le i \le m, \\
	s_{m+i}' \colonequals x_{i}, & \quad 1 \le i \le n-m.
\end{align*}
Of course $s'$ is a parking function for $T$ since $s$ is a parking function for $T$ and every one of the drivers $m+1, \dots, n$ can park at their preferred parking space. Applying $\varphi$ from Theorem~\ref{thm:bijection_parking_m=n} gives an $n$-mapping $f$, such that $s'$ is a parking function for $f$. Thus the sequence $s = (s_{1}, \dots, s_{m}) = (s_{1}', \dots, s_{m}')$, which contains the preferences of the first $m$ drivers of $s'$, is a parking function for $f$.
We define the pair $(f,s)$ to be the outcome of $\varphi'$.

As for the case $m=n$, it holds that the parking paths of the drivers coincide for $T$ and $f$. 
In particular, it holds $\pi_{(f,s)} = \pi_{(T,s)}$ for the corresponding output-functions. Thus, $\varphi'$ can be reversed easily, since the extension from $s$ to $s'$ can also be constructed when starting with $f$.
\end{proof}

\subsection{Asymptotic considerations\label{ssec:asymptotics}}

Let us now turn to the asymptotic analysis of the number $M_{n,m}$ of $(n,m)$-mapping parking functions.
Due to Theorem~\ref{thm:link_trees_mappings_general}, our results for parking functions for mappings can automatically be translated  to results for parking functions for trees. In this context the following question will be of particular interest to us: How does the probability $p_{n,m} \colonequals \frac{M_{n,m}}{n^{n+m}}$ that a randomly chosen sequence of length $m$ on the set $[n]$ is a parking function for a randomly chosen $n$-mapping swap from being equal to $1$ (which is the case for $m=1$) to being close to $0$ (which is the case for $m=n$) when the ratio $\rho \colonequals \frac{m}{n}$ increases? 

In order to get asymptotic results for $M_{n,m}$ (and so for $F_{n,m}$ and $p_{n,m}$, too) we start with the representation \eqref{eqn:Mnm_Q}, which can be written as
\begin{equation}\label{eqn:relation_Mnm_Anm}
  M_{n,m} = \frac{n! m! n^{n-m}}{(n-m)!} A_{n,m},
\end{equation}
with
\begin{align}
  A_{n,m} &= [w^{m}] (1-2w) e^{2nw} (1-w)^{n-m-1} \notag\\
  & = \frac{1}{2 \pi i} \oint \frac{(1-2w) e^{2nw} (1-w)^{n-m-1}}{w^{m+1}} dw, \label{eqn:Anm_def}
\end{align}
where, in the latter expression, we choose as contour a suitable simple positively oriented closed curve around the origin, e.g., for each choice of $m$ and $n$, we may choose any such curve in the dotted disk $0 < |w| < 1$.

Next we will use the integral representation \eqref{eqn:Anm_def} of $A_{n,m}$ and apply saddle point techniques (see, e.g., \cite{DeBruijn1958,flajolet2009analytic} for instructive expositions of this method).
We write the integral as follows:
\begin{equation}\label{eqn:Anm_gwhw}
  A_{n,m} = \frac{1}{2 \pi i} \int\limits_{\Gamma} g(w) e^{n h(w)} dw,
\end{equation}
with $\Gamma$ a suitable contour and
\begin{align}\label{eqn:def_gw_hw}
  g(w) &\colonequals \frac{1-2w}{(1-w)w} \quad \text{and} \\
   h(w) = h_{n,m}(w) &\colonequals 2w + \left( 1-\frac{m}{n}\right)  \log(1-w) - \frac{m}{n} \log w. \notag
\end{align}
In the terminology of \cite{flajolet2009analytic} the integral \eqref{eqn:Anm_gwhw} has the form of a ``large power integral'' and saddle points of the relevant part $e^{n h(w)}$ of the integrand can thus be found as the zeros of the derivative $h'(w)$. The resulting equation
\begin{equation*}
  h'(w) = 2 - \left( 1-\frac{m}{n}\right)  \frac{1}{1-w} - \frac{m}{n} \frac{1}{w} = 0
\end{equation*}
yields the following two solutions:
\begin{equation*}
  w_{1} = \frac{m}{n} \quad \text{and} \quad w_{2} = \frac{1}{2}.
\end{equation*}
The idea of the saddle point method is to choose a suitable integration contour passing through (or at least passing close to) the saddle point lying closer to the origin, such that the main contribution of the integral comes from a small part of the curve containing the saddle point. Thus, one chooses the contour in such a way that, locally around the saddle point, it follows the steepest descent lines.

\begin{figure}
\begin{tabular}{ccc}
\includegraphics[width=0.32\textwidth]{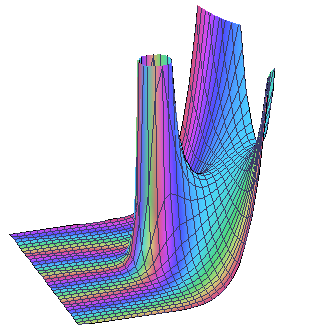} & \includegraphics[width=0.32\textwidth]{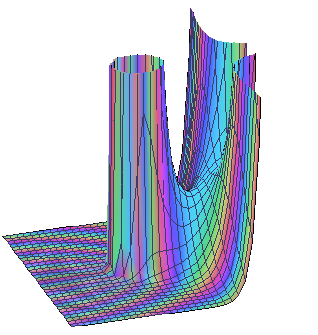} & \includegraphics[width=0.32\textwidth]{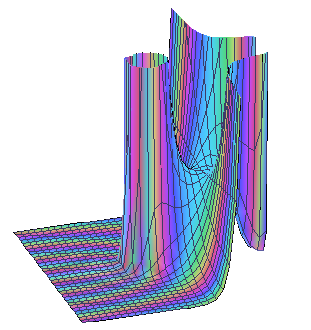} \\
\includegraphics[width=0.32\textwidth]{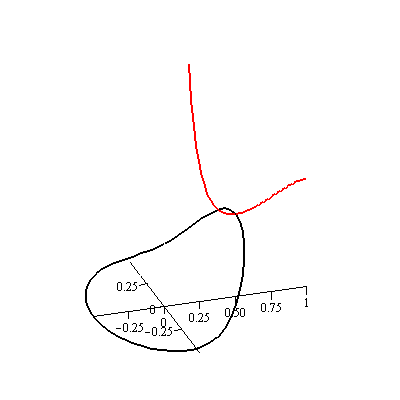} & \includegraphics[width=0.32\textwidth]{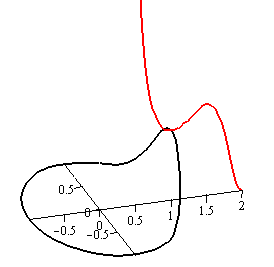} & \includegraphics[width=0.32\textwidth]{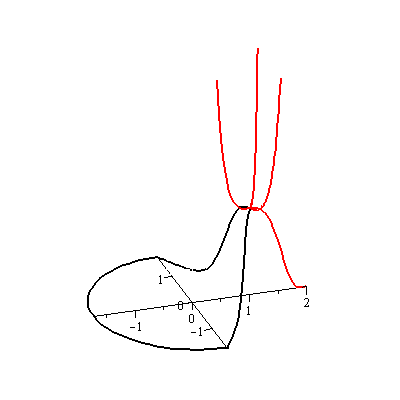} \\
\includegraphics[width=0.27\textwidth]{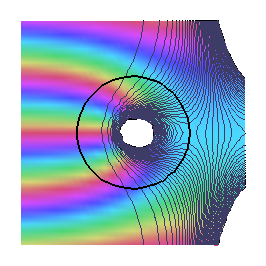} & \includegraphics[width=0.27\textwidth]{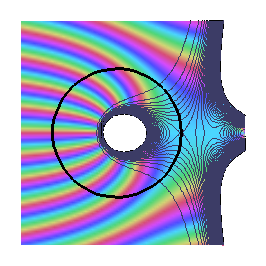} & \includegraphics[width=0.27\textwidth]{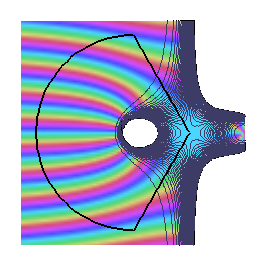} 
\end{tabular}
\caption{Plots of the modulus of the function $e^{nh(w)}$ near the saddle point for the three different regions; to the left the case $\rho<1/2$, in the middle $\rho>1/2$ and to the right $\rho=1/2$.
In the middle row: plots of the integration paths and steepest ascent/descent lines. The functions depicted here correspond to $n=12$ and $m=3$, $m=9$ and $m=6$ (from left to right).}
\label{fig:saddlepoints}
\end{figure}

The present situation is illustrated in Figure~\ref{fig:saddlepoints}.
In our asymptotic analysis we will have to distinguish whether $w_{1} < w_{2}$, $w_{1} > w_{2}$ or $w_{1} = w_{2}$. 
Actually, we will restrict ourselves to the cases $(i)$ $\rho = \frac{m}{n} \le \frac{1}{2} - \delta$ (with an arbitrary small, but fixed constant $\delta > 0$), $(ii)$ $\rho = \frac{m}{n} \ge \frac{1}{2} + \delta$, and $(iii)$ $\rho = \frac{m}{n} = \frac{1}{2}$, but remark that the transient behaviour of the sequences $M_{n,m}$, etc.\ for $m \sim \frac{n}{2}$ could be described via Airy functions as illustrated in \cite{BanFlaSchaefSor2001}.

We can sum up our results in the following theorem.
\begin{theorem}\label{thm:asymp_mappings}
The total number $M_{n,m}$ of $(n,m)$-mapping parking functions is asymptotically, for $n \to \infty$, given as follows (where $\delta$ denotes an arbitrary small, but fixed, constant):
\begin{align*}
M_{n,m} \sim \begin{cases} \frac{n^{n+m+\frac{1}{2}} \sqrt{n-2m}}{n-m}, & \text{for $1 \le m \le (\frac{1}{2} - \delta) n$},\\
\frac{\sqrt{2} \, 3^{\frac{1}{6}} \Gamma(\frac{2}{3}) n^{\frac{3n}{2}-\frac{1}{6}}}{\sqrt{\pi}}, & \text{for $m = \frac{n}{2}$},\\
\frac{m!}{(n-m)!} \cdot \frac{n^{2n-m+\frac{3}{2}} 2^{2m-n+1}}{(2m-n)^{\frac{5}{2}}}, & \text{for $(\frac{1}{2}+\delta) n \le m \le n$}. \end{cases}
\end{align*}
\end{theorem}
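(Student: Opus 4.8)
The plan is to determine the asymptotics of the coefficient integral $A_{n,m}$ in \eqref{eqn:Anm_gwhw} by the saddle point method and then to reassemble $M_{n,m}$ through the exact relation \eqref{eqn:relation_Mnm_Anm}, inserting Stirling's approximation for the factorial prefactor $\frac{n!\,m!\,n^{n-m}}{(n-m)!}$. Since the contour in \eqref{eqn:Anm_def} may be taken to be any circle inside the punctured disc $0<|w|<1$ (the integrand being analytic there), I would deform it to the circle whose radius equals the relevant saddle. Of the two saddles $w_{1}=m/n$ and $w_{2}=\tfrac12$, exactly one admits a vertical steepest-descent crossing, namely the one at which $h''>0$: from \eqref{eqn:def_gw_hw} one gets $h''(w)=-(1-\tfrac{m}{n})(1-w)^{-2}+\tfrac{m}{n}w^{-2}$, whence $h''(m/n)=\frac{1-2\rho}{\rho(1-\rho)}$ and $h''(\tfrac12)=4(2\rho-1)$ with $\rho=m/n$. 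Thus $w_{1}$ is used when $\rho<\tfrac12$ and $w_{2}$ when $\rho>\tfrac12$, while at $\rho=\tfrac12$ the two saddles coalesce.

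For case (i), $\rho\le\tfrac12-\delta$, the saddle $w_{1}=m/n$ is simple and nondegenerate. Here the quadratic formula $A_{n,m}\sim g(w_{1})\,e^{nh(w_{1})}/\sqrt{2\pi n\,h''(w_{1})}$ applies, and I would exploit the pleasant coincidence $g(m/n)=h''(m/n)=\frac{1-2\rho}{\rho(1-\rho)}$, which collapses the estimate to $A_{n,m}\sim\sqrt{g(w_{1})/(2\pi n)}\,e^{nh(w_{1})}$. Writing $e^{nh(w_{1})}=e^{2m}(1-\tfrac{m}{n})^{n-m}(\tfrac{m}{n})^{-m}$ and inserting Stirling for $n!$, $m!$, $(n-m)!$ yields, after cancellation, the first line $\frac{n^{n+m+1/2}\sqrt{n-2m}}{\,n-m\,}$.

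For case (ii), $\rho\ge\tfrac12+\delta$, the admissible saddle is $w_{2}=\tfrac12$, but now $g(\tfrac12)=0$, so the leading term of the saddle point expansion vanishes and one must descend to the next order. Expanding $g(w)=g'(\tfrac12)(w-\tfrac12)+\cdots$ with $g'(\tfrac12)=-8$ and coupling it with the cubic coefficient $h'''(\tfrac12)=-16$ along the vertical contour, the $\int\tau^{2}\,e^{-nh''\tau^{2}/2}$-type moment cancels by parity and the first surviving contribution is the Gaussian moment $\int\tau^{4}\,e^{-nh''\tau^{2}/2}\,d\tau$; together with the $\rho$-dependence $h''(\tfrac12)=4(2m-n)/n$ and $e^{nh(1/2)}=e^{n}2^{2m-n}$ this produces $A_{n,m}$ of order $n\,e^{nh(1/2)}(2m-n)^{-5/2}$, and reassembling via \eqref{eqn:relation_Mnm_Anm} and Stirling reproduces the third line with its $(2m-n)^{-5/2}$.

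The delicate case, and the main obstacle, is (iii) with $\rho=\tfrac12$, where $w_{1}$ and $w_{2}$ merge into a single \emph{cubic} saddle: here $h''(\tfrac12)=0$ and $g(\tfrac12)=0$ simultaneously, so neither the quadratic nor the degenerate-quadratic scheme applies. The plan is to expand $h(w)=1-\tfrac83(w-\tfrac12)^{3}+\cdots$ (using $h(\tfrac12)=1$, $h'''(\tfrac12)=-16$) and $g(w)=-8(w-\tfrac12)+\cdots$, then rescale $w-\tfrac12=(3/(8n))^{1/3}u$. This turns the local integral into $-2\cdot3^{2/3}e^{n}n^{-2/3}\cdot\frac{1}{2\pi i}\int_{\Gamma}u\,e^{-u^{3}}\,du$, where $\Gamma$ is the steepest-descent deformation, a wedge with rays $\arg u=\pm\tfrac{2\pi}{3}$ opening to the left. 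Evaluating with the substitution $v=u^{3}$ and the integral representation of the Gamma function gives $\frac{1}{2\pi i}\int_{\Gamma}u\,e^{-u^{3}}\,du=-\frac{\Gamma(2/3)}{2\sqrt3\,\pi}$, and feeding this back through \eqref{eqn:relation_Mnm_Anm} and Stirling at $m=n/2$ yields the middle line with its order $n^{3n/2-1/6}$ and constant $\sqrt2\,3^{1/6}\Gamma(2/3)/\sqrt\pi$. Beyond these computations, the real work is to make the three saddle point approximations rigorous: selecting admissible contours that follow the steepest-descent directions globally (and pass around $w=0$), and bounding the tail contributions away from the saddle uniformly in $m$ within each regime, the uniform control degrading precisely as $\rho\to\tfrac12$, where the genuinely uniform Airy-type description alluded to in the excerpt would be required to bridge the three cases.
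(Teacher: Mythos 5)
Your route is the paper's own: the same integral representation for $A_{n,m}$, the same two saddles $w_{1}=m/n$ and $w_{2}=\frac12$ with the same selection rule, a circle through $w_{1}$ in the subcritical regime (where your observation $g(w_{1})=h''(w_{1})=\frac{1-2\rho}{\rho(1-\rho)}$ is a tidy shortcut past the paper's explicit computation), and for $\rho=\frac12$ the same wedge contour with rays at angles $\pm\frac{2\pi}{3}$; your evaluation $\frac{1}{2\pi i}\int_{\Gamma}u\,e^{-u^{3}}du=-\frac{\Gamma(2/3)}{2\sqrt{3}\,\pi}$ reproduces exactly the paper's $A_{n,n/2}\sim\frac{3^{1/6}\Gamma(2/3)\,e^{n}}{\pi n^{2/3}}$, so your case (iii) is sound. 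Your case (ii), however, contains a genuine error. What parity kills is the \emph{odd} moment $\int \tau\,e^{-nh''(1/2)\tau^{2}/2}\,d\tau$ coming from $g'(\tfrac12)(w-\tfrac12)$ against the pure Gaussian, not a ``$\int\tau^{2}$-type moment'', which is even; and the first surviving contribution is \emph{not} only the $\tau^{4}$ moment. The quadratic term $\frac{g''(1/2)}{2}(w-\tfrac12)^{2}$ of $g$ contributes an even $\tau^{2}$ moment of exactly the same order $n^{-3/2}$ as the coupling of $g'(\tfrac12)(w-\tfrac12)$ with the cubic phase term $\frac{nh'''(1/2)}{6}(w-\tfrac12)^{3}$. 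In the paper's parametrization these are the $3\phi^{2}$ and the $\frac{5n-6m}{3}\phi^{4}$ terms, and only their sum, $\frac{3\sqrt{\pi}}{2\alpha^{3/2}}+\frac{(5-6\rho)\sqrt{\pi}}{4\alpha^{5/2}}=\frac{\sqrt{\pi}}{2\alpha^{5/2}}$ with $\alpha=\rho-\frac12$, yields the clean constant in $A_{n,m}\sim\frac{e^{n}2^{2m-n+1}}{\sqrt{2\pi}\,n^{3/2}(2\rho-1)^{5/2}}$. Keeping the $\tau^{4}$ moment alone produces a spurious factor proportional to $5-6\rho$ --- a wrong, $\rho$-dependent constant that is even negative for $\rho>\frac56$, absurd for a counting quantity. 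So, as described, your mechanism does not reproduce the third line of the theorem; the $(2m-n)^{-5/2}$ order is right but the constant is not.

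There is a second gap, at the lower end of the range $1\le m\le(\frac12-\delta)n$ in case (i). Since $w_{1}=m/n$ approaches the origin, the effective large parameter is $m$, not $n$: one has $nh''(w_{1})\asymp n^{2}/m$ and $nh'''(w_{1})\asymp n^{3}/m^{2}$, so the usual error measure $(nh''')^{2}/(nh'')^{3}\asymp 1/m$ (the paper's version of the error is $\mathcal{O}(m^{-1/2+\epsilon})$), which does not tend to zero for bounded $m$. Concretely, $A_{n,1}=n$ exactly, while your one-term formula gives $\frac{e}{\sqrt{2\pi}}\,n$. Your final asymptotics for $M_{n,m}$ nevertheless comes out correct at small $m$ only because you subsequently replace $m!$ in the prefactor $\frac{n!\,m!\,n^{n-m}}{(n-m)!}$ by its Stirling approximation, and this second error cancels the first at leading order --- a cancellation you neither notice nor prove. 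The paper closes this range by a separate direct expansion valid for $m=o(\sqrt{n})$, namely $\bigl(1-\frac{m}{n}e^{i\phi}\bigr)^{n-m-1}=e^{-me^{i\phi}}\bigl(1+\mathcal{O}(m^{2}/n)\bigr)$, which gives $A_{n,m}=\frac{n^{m}}{m!}\bigl(1+\mathcal{O}(m^{2}/n)\bigr)$ and hence $M_{n,m}=n^{n+m}\bigl(1+\mathcal{O}(m^{2}/n)\bigr)$; together with the saddle-point regime ($m\to\infty$) this covers the full stated range. You need an analogous step, or an explicit quantification of the error cancellation; your remark about ``bounding the tail contributions uniformly in $m$'' does not address this, because the failure for bounded $m$ lies in the central approximation, not in the tails.
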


Let us fix the ratio $\rho=m/n$. 
This ratio can be interpreted as a ``load factor''--a term used in open addressing hashing.
Then the  asymptotic behaviour of the probabilities $p_{n,m}=p_{n,\rho n}$ follows immediately.
\newpage
\begin{coroll}\label{cor:asymptotic_pnrho}
The probability $p_{n,m}$ that a randomly chosen pair $(f,s)$, with $f$ an $n$-mapping and $s$ a sequence in $[n]^{m}$, represents a parking function is asymptotically, for $n \to \infty$ and $m = \rho n$ with $0 < \rho < 1$ fixed, given as follows:
\begin{equation*}
p_{n,m} \sim 
\begin{cases} C_<(\rho), & \text{for $0 < \rho < \frac{1}{2}$},\\
C_{1/2} \cdot n^{-1/6}, & \text{for $\rho = 1/2$},\\
C_{>}(\rho) \cdot n^{-1} \cdot (D_{>}(\rho))^n, & \text{for $1/2 < \rho < 1$}, 
\end{cases}
\end{equation*}
with
\begin{align*}
C_{<}(\rho) & = \frac{\sqrt{1-2\rho}}{1-\rho}, & C_{1/2} & = \sqrt{\frac{6}{\pi}}\frac{\Gamma(2/3)}{ 3^{1/3}} \approx 1.298\dots, \\
C_{>}(\rho) & = 2 \cdot \sqrt{\frac{\rho}{(1-\rho)(2\rho-1)^5}}, & D_{>}(\rho) & = \left( \frac{4 \rho}{e^2}\right)^\rho \frac{e}{2 (1-\rho)^{1-\rho}}.
\end{align*}
\end{coroll}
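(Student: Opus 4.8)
The plan is to obtain Corollary~\ref{cor:asymptotic_pnrho} as a direct consequence of Theorem~\ref{thm:asymp_mappings} via the defining relation $p_{n,m} = M_{n,m}/n^{n+m}$. Since the ratio $\rho = m/n$ is held fixed with $0 < \rho < 1$, for all large $n$ we fall into exactly one of the three regimes of Theorem~\ref{thm:asymp_mappings}: for $\rho < 1/2$ we may choose the fixed $\delta$ small enough that $m \le (\tfrac{1}{2}-\delta)n$ eventually, for $\rho > 1/2$ likewise $m \ge (\tfrac{1}{2}+\delta)n$ eventually, and $\rho = 1/2$ is the boundary case. In each case I would substitute the corresponding estimate for $M_{n,m}$, divide by $n^{n+m}$, and simplify.

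The first two regimes require only elementary algebra. For $0 < \rho < 1/2$, dividing $M_{n,m} \sim n^{n+m+1/2}\sqrt{n-2m}/(n-m)$ by $n^{n+m}$ gives $p_{n,m} \sim n^{1/2}\sqrt{n-2m}/(n-m)$; writing $\sqrt{n-2m} = \sqrt{n}\,\sqrt{1-2\rho}$ and $n-m = n(1-\rho)$ collapses the powers of $n$ and leaves $p_{n,m} \sim \sqrt{1-2\rho}/(1-\rho) = C_<(\rho)$. For $\rho = 1/2$ we have $n^{n+m} = n^{3n/2}$, so dividing the estimate $M_{n,n/2} \sim \sqrt{2}\,3^{1/6}\Gamma(2/3)\,n^{3n/2-1/6}/\sqrt{\pi}$ by $n^{3n/2}$ yields the factor $n^{-1/6}$ together with the constant $\sqrt{2}\,3^{1/6}\Gamma(2/3)/\sqrt{\pi}$, which equals $C_{1/2}$ after the identity $\sqrt{6}/3^{1/3} = \sqrt{2}\,3^{1/6}$.

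The third regime $1/2 < \rho < 1$ is where the genuine work lies, because the estimate
\[
  p_{n,m} \sim \frac{m!}{(n-m)!}\cdot\frac{n^{n-2m+3/2}\,2^{2m-n+1}}{(2m-n)^{5/2}}
\]
contains the factorial ratio $m!/(n-m)!$, which I would expand via Stirling's formula $k! \sim \sqrt{2\pi k}\,(k/e)^k$. Setting $m = \rho n$ and $n-m = (1-\rho)n$ gives
\[
  \frac{m!}{(n-m)!} \sim \sqrt{\frac{\rho}{1-\rho}}\;\rho^{\rho n}(1-\rho)^{-(1-\rho)n}e^{n(1-2\rho)}\,n^{2m-n}.
\]
Substituting this, and writing $(2m-n)^{5/2} = n^{5/2}(2\rho-1)^{5/2}$ together with $2^{2m-n+1} = 2\,(2^{2\rho-1})^n$, the powers of $n$ telescope to a single factor $n^{-1}$, the purely exponential terms assemble into the $n$-th power of
\[
  \rho^\rho(1-\rho)^{-(1-\rho)}e^{1-2\rho}2^{2\rho-1} = \left(\frac{4\rho}{e^2}\right)^\rho\frac{e}{2(1-\rho)^{1-\rho}} = D_>(\rho),
\]
and the surviving constant prefactor is $2\sqrt{\rho/((1-\rho)(2\rho-1)^5)} = C_>(\rho)$.

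The only delicate point is the bookkeeping in this last regime: one must verify that the powers of $n$ arising from Stirling's expansion of the factorials (namely $n^{2m-n}$) cancel exactly against those in $n^{n-2m+3/2}/(2m-n)^{5/2}$ to leave precisely $n^{-1}$, and confirm the algebraic identities $(4\rho)^\rho e^{-2\rho} = (4\rho/e^2)^\rho$ and $2^{2m-n+1} = 2\,(2^{2\rho-1})^n$ that separate the exponential growth factor $D_>(\rho)^n$ from the constant $C_>(\rho)$. Everything else is routine simplification.
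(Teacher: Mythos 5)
Your proposal is correct and follows essentially the same route as the paper: the paper obtains Corollary~\ref{cor:asymptotic_pnrho} directly from Theorem~\ref{thm:asymp_mappings} by dividing each regime's estimate for $M_{n,m}$ by $n^{n+m}$ (stating that it ``follows immediately''), and your computation merely makes that algebra explicit. All three simplifications check out, including the Stirling expansion of $m!/(n-m)!$ in the regime $\rho>\tfrac{1}{2}$, the cancellation of the powers of $n$ to $n^{-1}$, and the identities $\sqrt{6}/3^{1/3}=\sqrt{2}\,3^{1/6}$ and $\rho^{\rho}(1-\rho)^{-(1-\rho)}e^{1-2\rho}2^{2\rho-1}=D_{>}(\rho)$.
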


From Corollary~\ref{cor:asymptotic_pnrho} it follows that the limiting probability $L(\rho) \colonequals \lim_{n \to \infty} p_{n,\rho n}$ that   all drivers can park successfully for a load factor $\rho$ is given as follows :
\begin{equation*}
  L(\rho) = \begin{cases} \frac{\sqrt{1-2\rho}}{1-\rho}, & \quad \text{for $0 \le \rho \le \frac{1}{2}$},\\ 0, & \quad \text{for $\frac{1}{2} \le \rho \le 1$}.\end{cases}
\end{equation*}
See Figure~\ref{fig:asymptotics:prho} for an illustration: On the left hand-side the limiting distribution $L(\rho)$ is plotted and on the right hand-side the exact probabilities $p_{n,\rho n}$ for some values of $n$ can be found. 
\begin{figure}
\begin{minipage}[b]{0.4\linewidth}
\centering
\includegraphics[height=6cm]{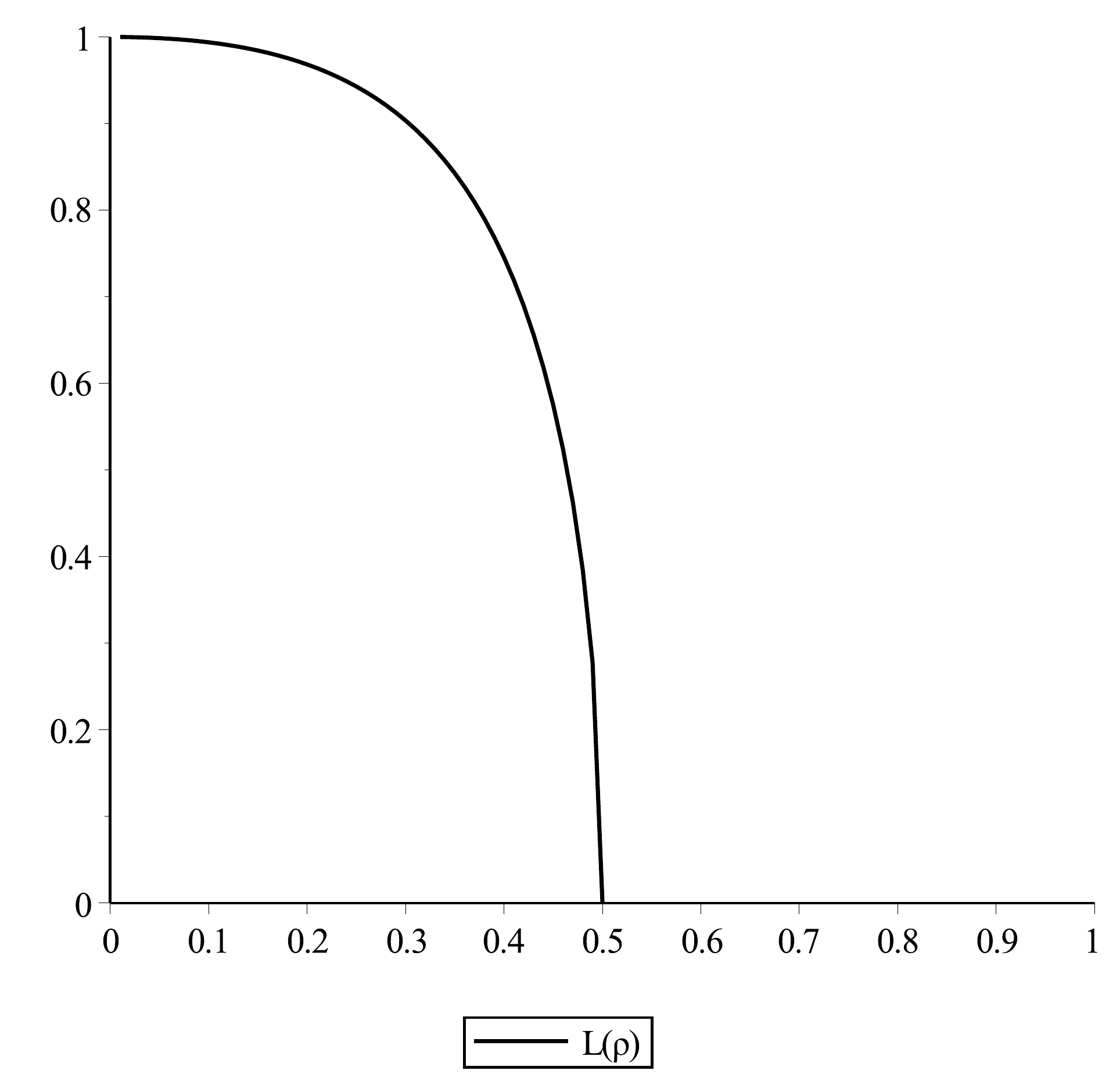}
\end{minipage}
\hspace{0.1cm}
\begin{minipage}[b]{0.55\linewidth}
\centering
\includegraphics[height=6cm]{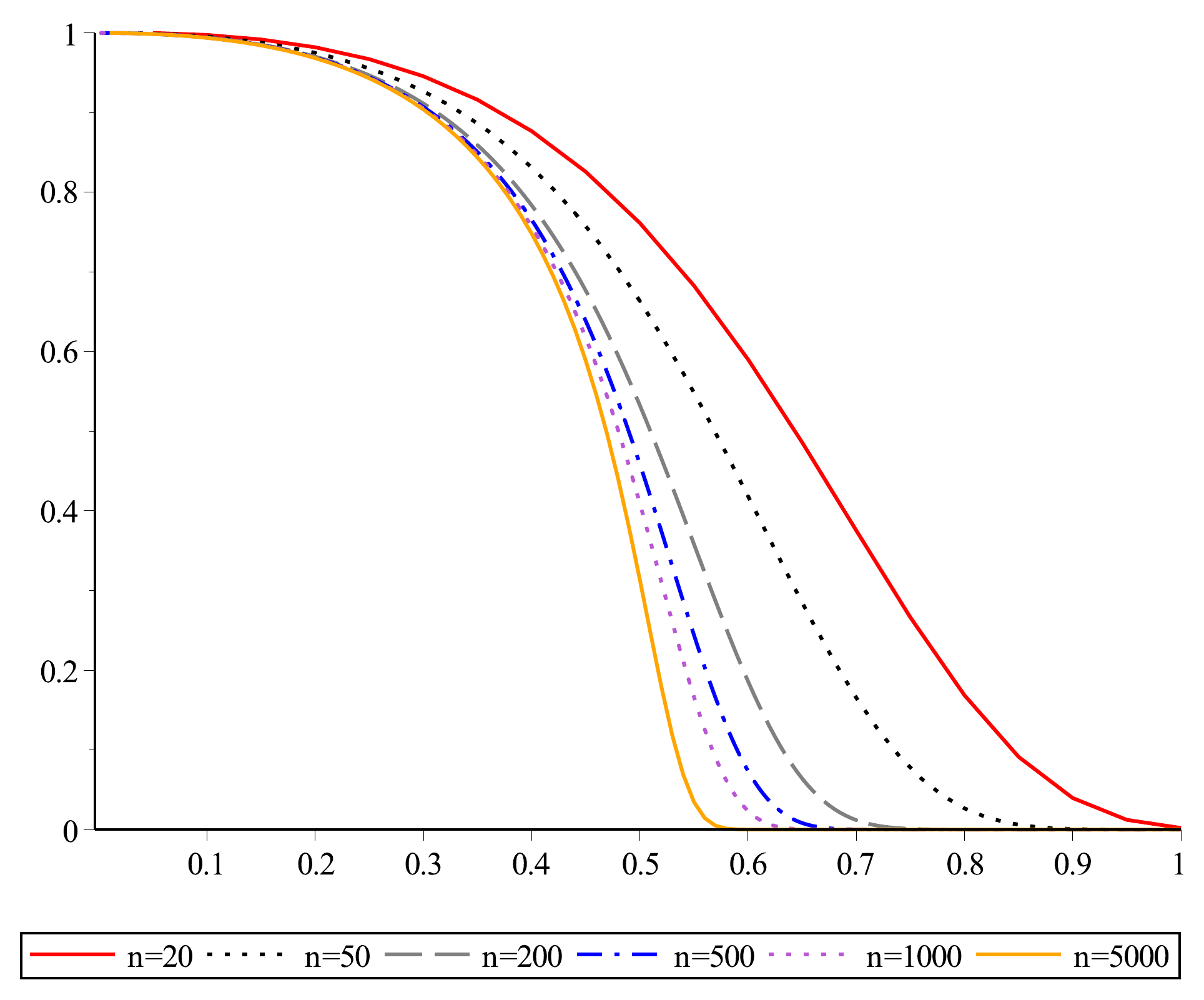}
\end{minipage}
\caption{To the left: The limiting probability $L(\rho)$ that all drivers are able to park successfully in a mapping, for a load factor $0 \le \rho \le 1$.
To the right: The exact probabilities $p_{n,\rho n}$ for $n=20, 50, 200, 500, 1000, 5000$.\label{fig:asymptotics:prho}}
\end{figure}

The proof of Theorem~\ref{thm:asymp_mappings} is given in Sections~\ref{sec:asymp_rho_small}-\ref{sec:asymp_rho_medium}.
As we can see from Theorem~\ref{thm:asymp_mappings} and Corollary~\ref{cor:asymptotic_pnrho}, the most interesting region for us is case $(i)$, i.e., where less than half of the parking spaces are occupied. We will thus provide the calculations for this region in detail, whereas the application of the saddle point method is only sketched for the other two regions. Before we continue with the computations, we want to comment on relations between mapping parking functions and ordered forests of unrooted trees.
\begin{remark}
  Let $G_{n,m}$ denote the number of ordered forests, i.e., sequences of $m$ unrooted labelled trees and comprised of $n$ nodes in total. The problem of evaluating $G_{n,m}$ asymptotically by using saddle point techniques has been considered in \cite[p.~603f.]{flajolet2009analytic}; as has been mentioned there, it is also relevant to the analysis of random graphs during the phase where a giant component has not yet emerged \cite{FlaKnuPit1989}.
	
	Since there are $n^{n-2}$ unrooted labelled trees of size $n \ge 1$, the exponential generating function $U(z) = \sum_{n \ge 1} n^{n-2} \frac{z^{n}}{n!}$ is given by $U(z) = T(z) - \frac{T^{2}(z)}{2}$, with $T(z)$ the tree-function.
	Thus  the numbers $G_{n,m}$ can be obtained as follows:
	\begin{align*}
	  G_{n,m} & = n! [z^{n}] U(z)^{m} = n! [z^{n}] \left(T(z)-\frac{T^{2}(z)}{2}\right)^{m} \\
	  & = \frac{n!}{2\pi i} \oint \left(T(z)-\frac{T^{2}(z)}{2}\right)^{m} \frac{dz}{z^{n+1}},
	\end{align*}
	by using a suitable contour around the origin. The substitution $z = \frac{T}{e^{T}}$ leads to 
	\begin{equation*}
	  G_{n,m} = \frac{n!}{2 \pi i} \oint \frac{e^{nT} \left(1-\frac{T}{2}\right)^{m} (1-T)}{T^{n-m+1}} dT.
	\end{equation*}
	After substituting $T = 2w$, one obtains
	\begin{align*}
	  G_{n,m} = & \frac{n!}{2 \pi i} \oint \frac{e^{2nw} \left(1-w\right)^{m} (1-2w)}{2^{n-m} w^{n-m+1}} dw \\
		= &\frac{n!}{2^{n-m}} [w^{n-m}] (1-2w) e^{2nw} (1-w)^{m},
	\end{align*}
	and so
	\begin{equation}\label{eqn:Gnm_otherexpression}
	  [w^{m}] (1-2w) e^{2nw} (1-w)^{n-m} = \frac{2^{m}}{n!} G_{n,n-m}.
	\end{equation}
	Comparing Equation~\eqref{eqn:Gnm_otherexpression} with Equation~\eqref{eqn:Anm_def} for $A_{n,m}$ suggests that the same phase change behaviour occurs for $A_{n,m}$ (and thus also $M_{n,m}$) and for $G_{n,n-m}$ as studied in \cite{{flajolet2009analytic}}.
\end{remark}
\begin{remark}
By  slightly adapting the considerations made in the previous remark we can even express the numbers $M_{n,m}$ directly via the number of ordered forests. Namely, let $\tilde{G}_{n,m}$ denote the number of ordered forests made of one rooted labelled tree followed by $m-1$ unrooted labelled trees and comprised of $n$ nodes in total. This yields
\begin{align*}
  \tilde{G}_{n,m} & = n! [z^{n}] T(z) U(z)^{m-1} = n! [z^{n}] T(z) \left(T(z)-\frac{T^{2}(z)}{2}\right)^{m-1} \\
	& = \frac{n!}{2\pi i} \oint T(z) \left(T(z) - \frac{T^{2}(z)}{2}\right)^{m-1} \frac{dz}{z^{n+1}},
\end{align*}
and, after the substitutions $z = \frac{T}{e^{T}}$ and $T=2w$, we end up with
\begin{align*}
  \tilde{G}_{n,m} & = \frac{n!}{2^{n-m}} \oint \frac{e^{2nw} (1-w)^{m-1} (1-2w)}{w^{n-m+1}} dw \\
  &= \frac{n!}{2^{n-m}} [w^{n-m}] (1-2w) e^{2nw} (1-w)^{m-1}.
\end{align*}
We get
\begin{equation*}
  A_{n,m} = \frac{2^{m}}{n!} \tilde{G}_{n,n-m},
\end{equation*}
and thus we are able to express $M_{n,m}$ by counting a certain number of ordered forests:
\begin{equation*}
  M_{n,m} = \frac{m! 2^{m} n^{n-m}}{(n-m)!} \tilde{G}_{n,n-m}.
\end{equation*}
\end{remark}

\subsubsection{The region $\rho \le \frac{1}{2} - \delta$\label{sec:asymp_rho_small}}

The geometry of the modulus of the integrand of \eqref{eqn:Anm_gwhw} as depicted in Figure~\ref{fig:saddlepoints} is easily described: There is a simple dominant saddle point at $w=w_{1}$, where the surface resembles an ordinary horse saddle or a mountain pass.
There are two steepest descent/steepest ascent lines: one following the real axis and one parallel to the imaginary axis. 
It is thus natural to adopt an integration contour that lies close to the steepest ascent and steepest descent line perpendicular to the real axis. 
In Equation~\eqref{eqn:Anm_gwhw} we thus choose the contour $\Gamma$ to be a circle centered at the origin and passing through the dominant saddle point $w_{1}$, i.e., it has radius $r = \rho$.

Using the parametrization $\Gamma = \{w = \rho e^{i \phi} : \phi \in [-\pi,\pi]\}$, we obtain from \eqref{eqn:Anm_gwhw} the representation
\begin{equation}\label{eqn:Anm_startI}
  A_{n,m} = \frac{1}{2 \pi} \int_{-\pi}^{\pi} \rho e^{i \phi} g(\rho e^{i \phi}) e^{n h(\rho e^{i \phi})} d\phi.
\end{equation}
Next we want to find a suitable splitting of the integral into the central approximation and the remainder.
That is, we need to choose a proper value $\phi_{0} = \phi_{0}(n,m)$ to write the contour as $\Gamma = \Gamma_{1} \cup \Gamma_{2}$, with $\Gamma_{1} \colonequals \{w = \rho e^{i \phi} : \phi \in [-\phi_{0}, \phi_{0}]\} $ and $\Gamma_{2} \colonequals \{w = \rho e^{i \phi} : \phi \in [-\pi,-\phi_{0}] \cup [\phi_{0},\pi]\}$ yielding the representation $A_{n,m} = I_{n,m}^{(1)} + I_{n,m}^{(2)}$, such that $I_{n,m}^{(2)} = o\left( I_{n,m}^{(1)}\right) $, where
\begin{align*}
  I_{n,m}^{(1)} & \colonequals \frac{1}{2 \pi} \int_{\Gamma_{1}} \rho e^{i \phi} g(\rho e^{i \phi}) e^{n h(\rho e^{i \phi})} d\phi \quad \text{ and } \\ 
	I_{n,m}^{(2)} & \colonequals \frac{1}{2 \pi} \int_{\Gamma_{2}} \rho e^{i \phi} g(\rho e^{i \phi}) e^{n h(\rho e^{i \phi})} d\phi.
\end{align*}
To do this we consider the local expansion of the integral around $\phi=0$; the following results are obtained by straightforward computations, which are thus omitted:
\begin{align*}
  \rho e^{i \phi} g(\rho e^{i \phi})  = &\frac{1-\frac{2m}{n} e^{i \phi}}{1-\frac{m}{n} e^{i \phi}} = \frac{1-\frac{2m}{n}}{1-\frac{m}{n}} \cdot \left(1+\mathcal{O}\left( \frac{m \phi}{n}\right) \right), \\
	n h(\rho e^{i \phi})  = & n \left(\frac{2m}{n} e^{i \phi} + \left( 1-\frac{m}{n}\right)  \log\left( 1-\frac{m}{n}e^{i \phi}\right)  - \frac{m}{n} \log\left( \frac{m}{n} e^{i \phi}\right)\right)  \\
	 = &2m + (n-m) \log\left( 1-\frac{m}{n}\right)  - m \log\left( \frac{m}{n}\right)  \\
	 & -\frac{m(n-2m)}{2(n-m)}\phi^{2} + \mathcal{O}(m \phi^{3}),
\end{align*}
yielding
\begin{align*}
\rho e^{i \phi} g(\rho e^{i \phi}) e^{n h(\rho e^{i \phi})} & = \left( 1-\frac{2m}{n}\right)  \left( \frac{n}{m}\right) ^{m} e^{2m} \left( 1-\frac{m}{n}\right) ^{n-m-1} e^{-\left( \frac{m(n-2m)}{2(n-m)}\right)  \phi^{2}} \cdot\\
& \qquad \cdot \left(1+\mathcal{O}\left( m \phi^{3}\right)  + \mathcal{O}\left( \frac{m \phi}{n}\right) \right).
\end{align*}

From the latter expansion we obtain that we shall choose $\phi_{0}$, such that $m \phi_{0}^{2} \to \infty$ (then the central approximation contains the main contributions) and $m \phi_{0}^{3} \to 0$ (then the remainder term is asymptotically negligible). E.g., we may choose $\phi_{0} = m^{-\frac{1}{2} + \frac{\epsilon}{3}}$, for a constant $0 < \epsilon < \frac{1}{2}$.
With such a choice of $\phi_{0}$ we obtain for the integral $I_{n,m}^{(1)}$ the following asymptotic expansion:
\begin{align*}
  I_{n,m}^{(1)}  = &\frac{e^{2m}}{2\pi} \left( 1-\frac{2m}{n}\right)  \left( \frac{n}{m}\right) ^{m}  \left( 1-\frac{m}{n}\right) ^{n-m-1} \cdot \int_{-\phi_{0}}^{\phi_{0}} e^{-\left(\frac{m(n-2m)}{2(n-m)}\right) \phi^{2}} d\phi \cdot \\
  & \cdot \left(1+\mathcal{O}\left(m^{-\frac{1}{2}+\epsilon}\right)\right) \\
	 = & \frac{e^{2m}}{2\pi} \left( 1-\frac{2m}{n}\right)  \left( \frac{n}{m}\right) ^{m}  \left( 1-\frac{m}{n}\right) ^{n-m-1} \frac{1}{\sqrt{m}} \cdot \int_{-m^{\frac{\epsilon}{3}}}^{m^{\frac{\epsilon}{3}}} e^{- \left(\frac{n-2m}{2(n-m)}\right) t^{2}} dt \cdot \\
	 & \cdot \left(1+\mathcal{O}\left(m^{-\frac{1}{2}+\epsilon}\right)\right),
\end{align*}
where we used the substitution $\phi = \frac{t}{\sqrt{m}}$ for the latter expression.

For the so-called tail completion we use that
\begin{equation*}
  \int_{c}^{\infty} e^{- \alpha t^{2}} dt = \mathcal{O}\left(e^{-\alpha c^{2}}\right), \quad \text{for $c > 0$ and $\alpha > 0$},
\end{equation*}
which can be shown, e.g., via
\begin{equation*}
\int_{c}^{\infty} e^{- \alpha t^{2}} dt \le \sum_{j=0}^{\infty} e^{-\alpha (c+j)^{2}} = e^{-\alpha c^{2}} \cdot \sum_{j=0}^{\infty} e^{- \alpha j (2c+j)} = \mathcal{O}\left(e^{-\alpha c^{2}}\right),
\end{equation*}
since $\sum_{j=0}^{\infty} e^{-\alpha (c+j)^{2}}$ converges.

Thus we obtain
\begin{equation*}
  \int_{m^{\frac{\epsilon}{3}}}^{\infty} e^{-\left(\frac{n-2m}{2(n-m)}\right) t^{2}} dt = \mathcal{O}\left(e^{-\left(\frac{n-2m}{2(n-m)}\right) m^{\frac{2 \epsilon}{3}}}\right),
\end{equation*}
which yields a subexponentially small and thus negligible error term. Using this, we may proceed in the asymptotic evaluation of $I_{n,m}^{(1)}$ and get
\begin{align*}
  I_{n,m}^{(1)} = & \frac{e^{2m}}{2 \pi \sqrt{m}} \left( 1-\frac{2m}{n}\right)  \left( \frac{n}{m}\right) ^{m} \left( 1-\frac{m}{n}\right) ^{n-m-1}  \\
  & \cdot\int_{-\infty}^{\infty} e^{-\left(\frac{n-2m}{2(n-m)}\right) t^{2}} dt \cdot \left(1+\mathcal{O}\left(m^{-\frac{1}{2}+\epsilon}\right)\right).
\end{align*}
Using
\begin{equation*}
  \int_{-\infty}^{\infty} e^{-\alpha t^{2}} dt = \frac{\sqrt{\pi}}{\sqrt{\alpha}}, \quad \text{for $\alpha > 0$},
\end{equation*}
the Gaussian integral occurring can be evaluated easily, which yields
\begin{align*}
  I_{n,m}^{(1)}  = &\frac{e^{2m}}{2 \pi \sqrt{m}} \left( 1-\frac{2m}{n}\right)  \left( \frac{n}{m}\right) ^{m} \left( 1-\frac{m}{n}\right) ^{n-m-1} \frac{\sqrt{2\pi}}{\sqrt{1-\frac{m}{n-m}}} \\
  & \cdot \left(1+\mathcal{O}\left(m^{-\frac{1}{2}+\epsilon}\right)\right) \\
	 = & \frac{e^{2m} (n-m)^{n-m-\frac{1}{2}} \sqrt{n-2m} }{\sqrt{2\pi} \, m^{m+\frac{1}{2}} n^{n-2m}} \cdot \left(1+\mathcal{O}\left(m^{-\frac{1}{2}+\epsilon}\right)\right).
\end{align*}

Next we consider the remainder integral
\begin{align*}
  I_{n,m}^{(2)} = & \frac{1}{2\pi} \left( 1-\frac{2m}{n}\right)  \cdot \\
  & \cdot \int_{\Gamma_{2}} \frac{1-\frac{2m}{n}e^{i \phi}}{1-\frac{m}{n}e^{i \phi}} e^{n \left(2 \frac{m}{n} e^{i \phi} + (1-\frac{m}{n}) \log(1-\frac{m}{n} e^{i \phi}) - \frac{m}{n} \log(\frac{m}{n}) - \frac{m}{n} i \phi\right)} d\phi.
\end{align*}
To estimate the integrand we use the obvious bounds
\begin{equation*}
  \left\vert 1-\frac{2m}{n} e^{i \phi}\right\vert \le 1+\frac{2m}{n} \quad \text{and} \quad \frac{1}{\left\vert 1-\frac{m}{n} e^{i \phi} \right\vert} \le \frac{1}{1-\frac{m}{n}},
\end{equation*}
as well as the following:
\begin{align*}
  & \left\vert e^{n \left(2 \frac{m}{n} e^{i \phi} + (1-\frac{m}{n}) \log(1-\frac{m}{n} e^{i \phi}) - \frac{m}{n} \log(\frac{m}{n}) - \frac{m}{n} i \phi\right)} \right\vert \\
  = & \left( \frac{n}{m}\right) ^{m} e^{n \left(2 \rho \cos \phi + \frac{1-\rho}{2} \log(1-2\rho \cos \phi + \rho^{2})\right)}.
\end{align*}
This yields
\begin{equation*}
  \left\vert I_{n,m}^{(2)} \right\vert \le \frac{1}{2\pi} \frac{(1-\frac{2m}{n}) (1+\frac{2m}{n})}{1-\frac{m}{n}} \cdot \left( \frac{n}{m}\right) ^{m} \cdot \int_{\Gamma_{2}} e^{n \left(2 \rho \cos \phi + \frac{1-\rho}{2} \log(1-2\rho \cos \phi + \rho^{2})\right)} d\phi.
\end{equation*}
Considering the function
\begin{equation*}
  \tilde{H}(x) \colonequals 2 \rho x + \frac{1-\rho}{2} \log(1-2\rho x + \rho^{2}),
\end{equation*}
it can be shown by applying standard calculus that $\tilde{H}(x)$ is a monotonically increasing function for $x \in [-1,1]$. Setting $x=\cos \phi$ it follows that amongst all points of the contour $\Gamma_{2}$ the integrand reaches its maximum at $\phi = \phi_{0}$.
Thus, we obtain
\begin{align*}
  \left\vert I_{n,m}^{(2)} \right\vert  \le & \frac{(1-\frac{2m}{n}) (1+\frac{2m}{n})}{1-\frac{m}{n}} \cdot \left( \frac{n}{m}\right)^{m} \cdot e^{2m \cos \phi_{0} + \frac{n-m}{2} \log\left( 1-\frac{2m}{n} \cos \phi_{0} + (\frac{m}{n})^{2}\right) } \\
	 \le & 2 \cdot \left(\frac{n}{m}\right)^{m} \cdot e^{2m \cos \phi_{0} + \frac{n-m}{2} \log\left( 1-\frac{2m}{n} \cos \phi_{0} + (\frac{m}{n})^{2}\right) } \\
	 =	&2 \left(\frac{n}{m}\right)^{m} e^{2m} e^{\frac{n-m}{2} \log\left( 1-\frac{2m}{n} + \left(\frac{m}{n}\right)^{2}\right) } \cdot \\
	& \cdot e^{2m(\cos \phi_{0} -1) + \frac{n-m}{2} \log\left(\frac{1-\frac{2m}{n} \cos \phi_{0} + \left(\frac{m}{n}\right)^{2}}{1-\frac{2m}{n}+\left(\frac{m}{n}\right)^{2}}\right)}\\
	 = & 2 \left(\frac{n}{m}\right)^{m} e^{2m} \left( 1-\frac{m}{n}\right) ^{n-m} \cdot e^{2m(\cos \phi_{0} -1)+\frac{n-m}{2} \log\left(1-\frac{\frac{2m}{n}(\cos \phi_{0}-1)}{(1-\frac{m}{n})^{2}}\right)}.
\end{align*}

Using the estimates
\begin{equation*}
  \log(1-x) \le -x, \quad \text{for $x <1$}, \quad \text{and} \quad \cos x - 1 \le -\frac{x^{2}}{6}, \quad \text{for $x \in [-\pi,\pi]$},
\end{equation*}
we may proceed as follows:
\begin{align*}
  |I_{n,m}^{(2)}| & \le 2 \left(\frac{n}{m}\right)^{m} e^{2m} \left( 1-\frac{m}{n}\right) ^{n-m} \cdot e^{m(\cos \phi_{0} -1) \cdot \left( 2-\frac{1}{1-\frac{m}{n}}\right) } \\
  & \le 2 \left(\frac{n}{m}\right)^{m} e^{2m} \left( 1-\frac{m}{n}\right) ^{n-m} \cdot e^{-\frac{\phi_{0}^{2}}{6} m \left( 2-\frac{1}{1-\frac{m}{n}}\right) } \\
	& = 2 \left(\frac{n}{m}\right)^{m} e^{2m} \left( 1-\frac{m}{n}\right) ^{n-m} \cdot e^{-\frac{1}{6} \left( 2-\frac{1}{1-\frac{m}{n}}\right)  m^{\frac{2\epsilon}{3}}}.
\end{align*}
Thus we obtain
\begin{equation*}
  |I_{n,m}^{(2)}| = |I_{n,m}^{(1)}| \cdot \mathcal{O}\left(\sqrt{m} \, e^{-c m^{\frac{2\epsilon}{3}}}\right), \quad \text{with } c=\frac{1}{6} \left( 2-\frac{1}{1-\frac{m}{n}}\right) ,
\end{equation*}
i.e., $I_{n,m}^{(2)}$ is subexponentially small compared to $I_{n,m}^{(1)}$.

Combining these results we get
\begin{equation*}
  A_{n,m} = \frac{(n-m)^{n-m-\frac{1}{2}} \sqrt{n-2m} \, e^{2m}}{\sqrt{2\pi} \, m^{m+\frac{1}{2}} n^{n-2m}} \cdot \left(1+\mathcal{O}\left(m^{-\frac{1}{2}+\epsilon}\right)\right)
\end{equation*}
and, by using \eqref{eqn:relation_Mnm_Anm} and applying Stirling's approximation formula for the factorials, 
\begin{align}
  M_{n,m} & = \frac{n! m! n^{n-m} (n-m)^{n-m-\frac{1}{2}} \sqrt{n-2m} \, e^{2m}}{\sqrt{2\pi} \, (n-m)! m^{m+\frac{1}{2}} n^{n-2m}} \cdot \left(1+\mathcal{O}\left(m^{-\frac{1}{2}+\epsilon}\right)\right) \notag\\
	& = \frac{n^{n+m} \sqrt{1-\frac{2m}{n}}}{1-\frac{m}{n}} \cdot \left(1+\mathcal{O}\left(m^{-\frac{1}{2}+\epsilon}\right)\right).
	\label{eqn:Mnm_asymp_res}
\end{align}

Note that according to the remainder term in \eqref{eqn:Mnm_asymp_res} we have only shown the required result for $m \to \infty$. However, again starting with \eqref{eqn:Anm_startI}, we can easily show a refined bound on the error term for small $m$. Namely, we may write the integral as follows:
\begin{equation*}
  A_{n,m} = \frac{1}{2\pi} \left(\frac{n}{m}\right)^{m} \cdot \int_{-\pi}^{\pi} \frac{e^{2m e^{i\phi}} (1-\frac{m}{n}e^{i\phi})^{n-m-1} (1-\frac{2m}{n} e^{i\phi})}{e^{im\phi}} d\phi,
\end{equation*}
and use for $m=o(\sqrt{n})$ the expansions
\begin{equation*}
  \left( 1-\frac{m}{n}e^{i\phi}\right) ^{n-m-1} = e^{-me^{i\phi}} \cdot \left(1+\mathcal{O}\left(\frac{m^{2}}{n}\right)\right), \quad 1-\frac{2m}{n} e^{i\phi} = 1 + \mathcal{O}\left( \frac{m}{n}\right) ,
\end{equation*}
which gives
\begin{equation*}
  A_{n,m} = \frac{1}{2\pi} \left(\frac{n}{m}\right)^{m} \cdot \int_{-\pi}^{\pi} \frac{e^{me^{i\phi}}}{e^{im\phi}} d\phi \cdot \left(1+\mathcal{O}\left(\frac{m^{2}}{n}\right)\right).
\end{equation*}
Using the substitution $z=e^{i\phi}$ this yields for $m=o(\sqrt{n})$
\begin{equation*}
  A_{n,m} = \frac{1}{2\pi i} \left(\frac{n}{m}\right)^{m} \cdot \oint \frac{e^{mz}}{z^{m+1}} dz \cdot \left(1+\mathcal{O}\left(\frac{m^{2}}{n}\right)\right) = \frac{n^{m}}{m!} \cdot \left(1+\mathcal{O}\left(\frac{m^{2}}{n}\right)\right)
\end{equation*}
and, again by using \eqref{eqn:relation_Mnm_Anm} and applying Stirling's approximation formula for the factorials, furthermore
\begin{align*}
  M_{n,m} & = \frac{n! n^{n}}{(n-m)!} \cdot \left(1+\mathcal{O}\left(\frac{m^{2}}{n}\right)\right) = n^{n+m} \cdot \left(1+\mathcal{O}\left(\frac{m^{2}}{n}\right)\right)\\
  &  = \frac{n^{n+m} \sqrt{1-\frac{2m}{n}}}{1-\frac{m}{n}} \cdot \left(1+\mathcal{O}\left(\frac{m^{2}}{n}\right)\right).
\end{align*}

\subsubsection{The region $\rho \ge \frac{1}{2} + \delta$\label{sec:asymp_rho_large}}

For this region we choose in \eqref{eqn:Anm_gwhw} the contour $\Gamma$ to be a circle centered at the origin and passing through the dominant saddle point $w_{2}$, i.e., it has radius $r = \frac{1}{2}$.
Using the parametrization $\Gamma = \{w = \frac{1}{2} e^{i \phi} : \phi \in [-\pi,\pi]\}$, we obtain from \eqref{eqn:Anm_gwhw} the representation
\begin{equation}\label{eqn:Anm_startII}
  A_{n,m} = \frac{1}{2 \pi} \int_{-\pi}^{\pi} \frac{1}{2} e^{i \phi} g(\frac{1}{2} e^{i \phi}) e^{n h(\frac{1}{2} e^{i \phi})} d\phi,
\end{equation}
with functions $g(w)$ and $h(w)$ defined in \eqref{eqn:def_gw_hw}.

As in the previous region we expand the integrand in \eqref{eqn:Anm_startII} around $\phi=0$ to find a suitable choice for $\phi_{0}$ to split the integral. However, due to cancellations, we require a more refined expansion which  can again be obtained by straightforward computations. Namely, we obtain
\begin{align*}
  \frac{e^{i \phi}}{2} g\left( \frac{e^{i \phi}}{2} \right)   = & -2i \phi + 3 \phi^{2} + \mathcal{O}(\phi^{3}), \\
	n h\left( \frac{e^{i \phi}}{2} \right)   = & n + (2m-n) \log 2 - \left( m-\frac{n}{2}\right)  \phi^{2} + i \left( \frac{5n}{6} - m\right)  \phi^{3} + \mathcal{O}(n \phi^{4}),
\end{align*}
which gives
\begin{align*}
  \frac{e^{i \phi}}{2}  g\left( \frac{e^{i \phi}}{2} \right)  e^{n h\left( \frac{e^{i \phi}}{2} \right) }  = &e^{n} 2^{2m-n} e^{-(m-\frac{n}{2})\phi^{2}} \cdot \\
	& \cdot \Bigg(-2i \phi +3\phi^{2} + \frac{5n-6m}{3} \phi^{4} + \\
	& \quad + \mathcal{O}(\phi^{3}) + \mathcal{O}(n\phi^{5}) + \mathcal{O}(n^{2} \phi^{7}) \Bigg).
\end{align*}

Thus we may choose $\phi_{0} = n^{-\frac{1}{2}+\epsilon}$, with $0 < \epsilon < \frac{1}{6}$ to split the contour $\Gamma = \Gamma_{1} \cup \Gamma_{2}$, with $\Gamma_{1} = \{w = \frac{1}{2} e^{i \phi} : \phi \in [-\phi_{0},\phi_{0}]\} $ and $\Gamma_{2} = \{w = \frac{1}{2} e^{i \phi} : \phi \in [-\pi,-\phi_{0}] \cup [\phi_{0},\pi]\} $.
Let us again denote by $I_{n,m}^{(1)}$ and $I_{n,m}^{(2)}$ the contribution of the integral in the representation \eqref{eqn:Anm_startII} over $\Gamma_{1}$ and $\Gamma_{2}$, respectively.

For $I_{n,m}^{(1)}$ we use the above expansion for the integrand and obtain after simple manipulations
\begin{equation*}
  I_{n,m}^{(1)} = c_{n,m}  \int_{-\phi_{0}}^{\phi_{0}} e^{-(m-\frac{n}{2})\phi^{2}}  \left(-2i \phi + 3 \phi^{2} + \frac{5n-6m}{3} \phi^{4} + \mathcal{O}(n^{-\frac{3}{2}+7\epsilon})\right) d\phi,
\end{equation*}
where the multiplicative factor $c_{n,m}$ is equal to $\frac{e^{n} 2^{2m-n}}{2\pi}$.
Again it holds that completing the tails only gives a subexponentially small error term and we obtain
\begin{align*}
  I_{n,m}^{(1)} & = c_{n,m}  \int_{-\infty}^{\infty} e^{-(m-\frac{n}{2})\phi^{2}}  \left(-2i \phi + 3 \phi^{2} + \frac{5n-6m}{3} \phi^{4} + \mathcal{O}(n^{-\frac{3}{2}+7\epsilon})\right) d\phi\\
	& =   \frac{c_{n,m}}{\sqrt{n}} \int_{-\infty}^{\infty} e^{-(\frac{m}{n}-\frac{1}{2})t^{2}} \left(-\frac{2it}{\sqrt{n}}+\frac{3t^{2}}{n} + \frac{5n-6m}{3} \frac{t^{4}}{n^{2}} + \mathcal{O}\left(n^{-\frac{3}{2}+7\epsilon}\right)\right) dt,
\end{align*}
where we used the substitution $\phi = \frac{t}{\sqrt{n}}$ to get the latter expression.

Using the integral evaluations (with $\alpha > 0$):
\begin{equation*}
  \int_{-\infty}^{\infty} t e^{-\alpha t^{2}} dt = 0, \quad \int_{-\infty}^{\infty} t^{2} e^{-\alpha t^{2}} dt = \frac{\sqrt{\pi}}{2 \alpha^{\frac{3}{2}}}, \quad \int_{-\infty}^{\infty} t^{4} e^{-\alpha t^{2}} dt = \frac{3 \sqrt{\pi}}{4 \alpha^{\frac{5}{2}}},
\end{equation*}
we obtain
\begin{align*}
  I_{n,m}^{(1)} & = \frac{1}{2\pi} e^{n} 2^{2m-n} \frac{1}{n^{\frac{3}{2}}} \left(\frac{3 \sqrt{\pi}}{2\left( \frac{m}{n}-\frac{1}{2}\right) ^{\frac{3}{2}}} + \frac{\left( 5-\frac{6m}{n}\right)  \sqrt{\pi}}{4 \left( \frac{m}{n}-\frac{1}{2}\right) ^{\frac{5}{2}}}\right) \cdot \left(1+\mathcal{O}\left(n^{-\frac{1}{2}+7\epsilon}\right)\right)\\
	& = \frac{e^{n} 2^{2m-n+1}}{\sqrt{2 \pi} \, n^{\frac{3}{2}} \left( \frac{2m}{n}-1\right) ^{\frac{5}{2}}} \cdot \left(1+\mathcal{O}\left(n^{-\frac{1}{2}+7\epsilon}\right)\right).
\end{align*}

Again, it can be shown that the main contribution of $A_{n,m}$ comes from $I_{n,m}^{(1)}$, i.e., that it holds
\begin{equation*}
  I_{n,m}^{(2)} = \frac{1}{2 \pi} \int_{\Gamma_{2}} \frac{1}{2} e^{i \phi} g\left( \frac{1}{2} e^{i \phi}\right)  e^{n h\left( \frac{1}{2} e^{i \phi}\right) } d\phi
	= o\left( I_{n,m}^{(1)}\right) ,
\end{equation*}
but here we omit these computations.
Thus we obtain
\begin{equation*}
  A_{n,m} = \frac{e^{n} 2^{2m-n+1}}{\sqrt{2 \pi} \, n^{\frac{3}{2}} \left( \frac{2m}{n}-1\right) ^{\frac{5}{2}}} \cdot \left(1+\mathcal{O}\left(n^{-\frac{1}{2}+7\epsilon}\right)\right),
\end{equation*}
and
\begin{equation*}
  M_{n,m} = \frac{m!}{(n-m)!} \cdot \frac{n^{2n-m-1} 2^{2m-n+1}}{\left( \frac{2m}{n}-1\right) ^{\frac{5}{2}}} \cdot \left(1+\mathcal{O}\left(n^{-\frac{1}{2}+7\epsilon}\right)\right).
\end{equation*}

\subsubsection{The monkey saddle for $\rho = 1/2$\label{sec:asymp_rho_medium}}

For $\rho = \frac{m}{n} = \frac{1}{2}$, the situation is slightly different to the previous regions since the two otherwise distinct saddle points coalesce to a unique double saddle point.
The difference in the geometry of the surface, i.e., of the modulus of the large power $e^{n \cdot h(w)}$ in \eqref{eqn:Anm_gwhw}, is that there are now three steepest descent lines and three steepest ascent lines departing from the saddle point (in contrast to two steepest descent lines and two steepest ascent lines for the case of a simple saddle point).
This explains why such saddle points are also referred to as ``monkey saddles'': they do not only offer space for two legs but also for a tail.
In this particular case the three steepest descent and steepest ascent lines departing from the saddle point at $w=w_{1}=w_{2}=\frac{1}{2}$ have angles $0, 2 \pi/3$ and $-2 \pi/3$ as can be seen in the middle right of Figure~\ref{fig:saddlepoints}.
This also follows from a local expansion of $h(w)$ as defined in \eqref{eqn:def_gw_hw} around $w=\frac{1}{2}$:
\begin{equation*}
  h(w) = 1 - \frac{8}{3} \left( w-\frac{1}{2}\right) ^{3} + \mathcal{O}\left(\left( w-\frac{1}{2}\right) ^{4}\right).
\end{equation*}

Thus, we may choose as integration contour two line segments joining the point $w=\frac{1}{2}$ with the imaginary axis at an angle of $-2 \pi/3$ and $2 \pi/3$, respectively, as well as a half circle centered at the origin and joining the two line segments. See the bottom right of Figure~\ref{fig:saddlepoints}. This yields $\Gamma = \Gamma_{1} \cup \Gamma_{2} \cup \Gamma_{3}$ and $A_{n,m} = I_{n,m}^{(1)} + I_{n,m}^{(2)} + I_{n,m}^{(3)}$ for the corresponding integrals, where we use the parametrizations $\Gamma_{1} \colonequals \left\lbrace \frac{1}{2}-e^{-\frac{2\pi i}{3}} t : t \in [-1,0]\right\rbrace $, $\Gamma_{2} \colonequals \left\lbrace \frac{1}{2}+e^{\frac{2\pi i}{3}} t : t \in [0,1]\right\rbrace $, and $\Gamma_{3} \colonequals \left\lbrace \frac{\sqrt{3}}{2} e^{i t} : t \in [\frac{\pi}{2}, \frac{3\pi}{2}]\right\rbrace $.

We first treat
\begin{align*}
  I_{n,m}^{(1)} & = \frac{1}{2\pi i} \int_{-1}^{0} \left( -e^{-\frac{2\pi i}{3}}\right)  g\left( \frac{1}{2}-e^{-\frac{2\pi i}{3}} t\right)  e^{n h\left( \frac{1}{2}-e^{-\frac{2\pi i}{3}} t\right) } dt\\
	& = \frac{1}{2\pi i} \int_{0}^{1} \left( -e^{-\frac{2\pi i}{3}}\right)  g\left( \frac{1}{2}+e^{-\frac{2\pi i}{3}} t\right)  e^{n h\left( \frac{1}{2}+e^{-\frac{2\pi i}{3}} t\right) } dt.
\end{align*}
In order to find a suitable choice $t_{0}$ for splitting the integral for the central approximation and the remainder we consider the expansion of the integrand around $t=0$, which can be obtained easily:
\begin{align}\label{eqn:Inm1_integrand_expansion}
& \frac{-e^{-\frac{2\pi i}{3}}}{2\pi i} g\left( \frac{1}{2}+e^{-\frac{2\pi i}{3}} t\right)  e^{n h\left( \frac{1}{2}+e^{-\frac{2\pi i}{3}} t\right) } \\
= & \frac{4 e^{n} e^{-\frac{4 \pi i}{3}}}{\pi i} t e^{-\frac{8}{3} n t^{3}} \cdot \left(1+\mathcal{O}(t^{2}) + \mathcal{O}(n t^{5})\right). \notag
\end{align}
Thus we obtain the restrictions $n t_{0}^{3} \to \infty$ and $n t_{0}^{5} \to 0$ which are, e.g., satisfied when choosing $t_{0} = n^{-\frac{1}{4}}$. This splitting yields $I_{n,m}^{(1)} = I_{n,m}^{(1,1)} + I_{n,m}^{(1,2)}$, for the integration paths $t \in [0,t_{0}]$ and $t \in [t_{0},1]$, respectively.

Using the local expansion of the integrand \eqref{eqn:Inm1_integrand_expansion} as well as the before-mentioned choice for $t_{0}$, the central approximation $I_{n,m}^{(1,1)}$ gives
\begin{align*}
  I_{n,m}^{(1,1)} = & \frac{4 e^{n} e^{-\frac{4 \pi i}{3}}}{\pi i} \int_{0}^{t_{0}} t e^{-\frac{8}{3} n t^{3}} dt \cdot \left(1+\mathcal{O}(n^{-\frac{1}{4}})\right) \\
  = & \frac{4 e^{n} e^{-\frac{4 \pi i}{3}}}{\pi i} \int_{0}^{\infty} t e^{-\frac{8}{3} n t^{3}} dt \cdot \left(1+\mathcal{O}(n^{-\frac{1}{4}})\right),
\end{align*}
since one can show easily that completing the integral only yields a subexponentially small error term. Moreover, also the remainder
\begin{equation*}
  I_{n,m}^{(1,2)} = \frac{1}{2\pi i} \int_{t_{0}}^{1} \left( -e^{-\frac{2\pi i}{3}}\right)  g\left( \frac{1}{2}+e^{-\frac{2\pi i}{3}} t\right)  e^{n h\left( \frac{1}{2}+e^{-\frac{2\pi i}{3}} t\right) } dt
\end{equation*}
only yields a subexponentially small error term compared to $I_{n,m}^{(1,1)}$. Thus, we get the contribution
\begin{equation*}
  I_{n,m}^{(1)} = \frac{4 e^{n} e^{-\frac{4 \pi i}{3}}}{\pi i} \int_{0}^{\infty} t e^{-\frac{8}{3} n t^{3}} dt \cdot \left(1+\mathcal{O}(n^{-\frac{1}{4}})\right).
\end{equation*}

The integral
\begin{equation*}
  I_{n,m}^{(2)} = \frac{1}{2\pi i} \int_{0}^{1} \left( e^{\frac{2\pi i}{3}}\right)  g\left( \frac{1}{2}+e^{\frac{2\pi i}{3}} t\right)  e^{n h\left( \frac{1}{2}+e^{\frac{2\pi i}{3}} t\right) } dt,
\end{equation*}
can be treated in an analogous manner which gives the contribution
\begin{equation*}
  I_{n,m}^{(2)} = - \frac{4 e^{n} e^{\frac{4 \pi i}{3}}}{\pi i} \int_{0}^{\infty} t e^{-\frac{8}{3} n t^{3}} dt \cdot \left(1+\mathcal{O}(n^{-\frac{1}{4}})\right).
\end{equation*}

Moreover, one can show that the contribution of 
\begin{equation*}
  I_{n,m}^{(3)} = \frac{1}{2\pi i} \int_{\frac{\pi}{2}}^{\frac{3\pi}{2}} \frac{\sqrt{3}}{2} i e^{it} g\left( \frac{\sqrt{3}}{2} e^{i t}\right)  e^{n h\left( \frac{\sqrt{3}}{2} e^{it}\right) } dt
\end{equation*}
is asymptotically negligible compared to $I_{n,m}^{(1)}$ and $I_{n,m}^{(2)}$.

Collecting the contributions and evaluating the integral yields
\begin{align*}
  A_{n,m} & \sim \frac{4 e^{n}}{\pi i} \left(e^{-\frac{4 \pi i}{3}} - e^{\frac{4 \pi i}{3}}\right) \cdot \int_{0}^{\infty} t e^{-\frac{8}{3} n t^{3}} dt = \frac{4 \sqrt{3} \, e^{n}}{\pi} \int_{0}^{\infty} t e^{-\frac{8}{3} n t^{3}} dt \\
	& = \frac{3^{\frac{1}{6}} e^{n} n^{-\frac{2}{3}}}{\pi} \Gamma\left(\frac{2}{3}\right),
\end{align*}
and thus by using \eqref{eqn:relation_Mnm_Anm}:
\begin{equation*}
  M_{n,m} \sim \frac{\sqrt{2} \, 3^{\frac{1}{6}} \Gamma(\frac{2}{3}) n^{\frac{3n}{2}}}{\sqrt{\pi} \, n^{\frac{1}{6}}}.
\end{equation*}

\section{Conclusion\label{sec:further_research}}

This paper constitutes the  first treatment of parking functions for trees and mappings.
Several possible further research directions arise; we mention a few of them in the following.

\begin{enumerate}
\item Given a tree $T$ or a mapping $f$, we obtained general, but simple bounds for the number of tree and mapping parking functions $S(T,m)$ and $S(f,m)$, respectively. It is possible to obtain explicit formul{\ae} for some simple classes of trees (or mappings), e.g., for ``chain-like'' trees with only few branchings. However, the following question remains open: Is it possible in general to give some ``simple characterization'' of the numbers $S(T,m)$ and $S(f,m)$, respectively?

\item With the approach presented, one can also study the total number of parking functions for other important tree families as, e.g., labelled binary trees or labelled ordered trees. We already performed some preliminary work for these tree families and according to our considerations, the results are considerably more involved than for labelled unordered trees and mappings. However, they do not seem to lead to new phase change phenomena. Thus we did not comment on these studies here.

\item In contrast to the previous comment, the problem of determining the total number of parking functions seems to be interesting for so-called increasing (or decreasing) tree families (see, e.g., \cite{Drmota2009,PanPro2007}). That is, the labels along all leaf-to-root-paths form an increasing (or, decreasing) sequence. For so-called recursive trees (see, e.g., \cite{MahSmy1995,FuchHwaNei2006}), i.e., unordered increasing trees, the approach presented could be applied, but the differential equations occurring do not seem to yield ``tractable'' solutions. For such tree families quantities such as the ``sums of parking functions'' as studied in \cite{KunYan2003} could be worthwhile treating as well.

\item As for ordinary parking functions one could analyse important quantities for tree and mapping parking functions. E.g., the so-called ``total displacement'' (which is of particular interest in problems related to hashing algorithms, see \cite{FlaPobVio1998,Janson2001}), i.e., the total driving distance of the drivers, or ``individual displacements'' (the driving distance of the $k$-th driver, see \cite{Janson2005,Viola2005}) seem to lead to interesting questions.

\item A refinement of parking functions can be obtained by studying what has been called ``defective parking functions'' in \cite{CamJohPreSchwe2008}, or ``overflow'' in \cite{GonMun1984}, i.e., pairs $(T,s)$ or $(f,s)$, such that exactly $k$ drivers are unsuccessful. Preliminary studies indicate that the approach presented is suitable to obtain results in this direction as well.

\item Again, as for ordinary parking functions, one could consider enumeration problems for some restricted parking functions for trees (or mappings). E.g., we call $(T,s)$, with $T$ a size-$n$ tree and $s \in [n]^{n}$ a parking function for $T$, an ordered tree parking function, if $\pi^{-1}(v) < \pi^{-1}(w)$, whenever $v \prec w$ (i.e., if the $k$-th driver parks at parking space $w$, all predecessors of $w$ are already occupied by earlier drivers). Then it is easy to show that for any size-$n$ tree $T$ there are exactly $n!$ ordered tree parking functions.

\item Let us denote by $X_{n}$ the random variable measuring the number of parking functions $s$ with $n$ drivers for a randomly chosen labelled unordered tree $T$ of size $n$. Then, due to our previous results, we get the expected value of $X_{n}$ via
\begin{equation*}
  \mathbb{E}(X_{n}) = \frac{F_{n}}{T_{n}} \sim \frac{\sqrt{2 \pi} \, 2^{n+1} n^{n-\frac{1}{2}}}{e^{n}}.
\end{equation*}
However, with the approach presented here, it seems that we are not able to obtain higher moments or other results on the distribution of $X_{n}$.
\end{enumerate}

\bibliographystyle{abbrv}
\bibliography{lit}

\end{document}